\title{Essential dimension of generic symbols in characteristic $p$}
\author[K. McKinnie]{Kelly McKinnie}
\address{Department of Mathematical Sciences, University of Montana, Missoula, MT 59812}
\email{kelly.mckinnie@umontana.edu}
\subjclass[2010]{Primary 16K20; Secondary 	20G10, 13A35, 13A18, }
\newtheorem{theorem}{Theorem}[section]
\newtheorem{lemma}[theorem]{Lemma}
\newtheorem{proposition}[theorem]{Proposition}
\newtheorem{corollary}[theorem]{Corollary}
\newtheorem*{main}{Main Result}
\newtheorem*{maincor}{Corollary}
\theoremstyle{remark}
\newtheorem{example}[theorem]{Example}
\newtheorem{remark}[theorem]{Remark}
\renewcommand{\H}{\mathrm{H}}
\newcommand{\trdeg}{\mathrm{tr.deg}}
\newcommand{\res}[1]{\mathrm{res}_{#1}}
\newcommand{\gen}{\mathrm{gen}}
\newcommand{\ed}{\mathrm{ed}}
\newcommand{\ur}{{p,\mathrm{ur}}}
\newcommand{\Z}{\mathbb Z}
\newcommand{\br}{\mathrm{Br}}
\newcommand{\alg}{\mathrm{Alg}}
\newcommand{\kln}{k_{\ell,n}}
\newcommand{\okln}{\overline k_{\ell,n}}
\newcommand{\kpln}{k'_{\ell-1,n}}
\newcommand{\vyln}{v_{{\ell,n}}}
\newcommand{\dec}{\mathrm{Dec}}
\newcommand{\pgl}{\mathrm{PGL}}
\numberwithin{equation}{section}
\begin{document}

\begin{abstract}
In this article the $p$-essential dimension of generic symbols over fields of characteristic $p$ is studied. In particular, the $p$-essential dimension of the length $\ell$ generic $p$-symbol of degree $n+1$ is bounded below by $n+\ell$ when the base field is algebraically closed of characteristic $p$. The proof uses new techniques for working with residues in Milne-Kato $p$-cohomology and builds on work of Babic and Chernousov in the Witt group in characteristic 2.  Two corollaries on $p$-symbol algebras (i.e, degree 2 symbols) result from this work. The generic $p$-symbol algebra of length $\ell$ is shown to have $p$-essential dimension equal to $\ell+1$ as a $p$-torsion Brauer class. The second is a lower bound of $\ell+1$ on the $p$-essential dimension of the functor $\alg_{p^\ell,p}$. Roughly speaking this says that you will need at least $\ell+1$ independent parameters to be able to specify any given algebra of degree $p^{\ell}$ and exponent $p$ over a field of characteristic $p$ and improves on the previously established lower bound of 3. 
\end{abstract}

\maketitle

\section{Introduction}\label{s1}

The essential dimension of an algebraic object is informally defined as the number of algebraically independent parameters you need to define the object. In this paper we will consider the essential dimension of objects and functors relating to central simple algebras and higher symbols in Milne-Kato cohomology, focusing on the bad characteristic case. 
Since its introduction in \cite{MR1457337}, most of the upper and lower bounds on the essential dimension of central simple algebras have required that the degree of the algebra be relatively prime to the characteristic of the base field $k$. Two excellent surveys on essential dimension,  \cite{MR3055773} and \cite{Reichstein-ICM}, contain many of these results, algebraic and functorial definitions of essential dimension, $p$-essential dimension and much more. 

When the characteristic of $k$ divides the degree of the central simple algebra, the so-called ``bad characteristic case'', upper and lower bounds on the essential dimension have been more sparse.  To illustrate that these cases are fundamentally different we can look at {\it generic symbol algebras} in both cases.  First, let us fix some notation. Let $m$ and $n$ be positive integers with $m|n$, let $p>0$ be prime and fix a field $k$. Define functors $\alg_n$, $\alg_{n,m}$, $\dec_{p^\ell}$, ${}_p\br:\textbf{Fields}/k \to \textbf{sets}$ by 
\begin{eqnarray*}
\alg_n(K)& = & \{\textrm{isom. classes of central simple $K$-algebras of degree $n$}\}\\
\alg_{n,m}(K)& = & \{\textrm{subset of $\alg_n(K)$ whose elements have exponent dividing $m$}\}\\
\dec_{p^\ell}(K)& = & \{\textrm{isom. classes of tensor products of $\ell$ degree p symbols over $K$}\}\\
{}_p\br(K) & = & \{\textrm{$p$-torsion Brauer classes over $K$}\}
\end{eqnarray*}
for any field extension $K/k$. $\H^1(K,\pgl_n)$, the set of isomorphism classes of $\pgl_n$-torsors over $\mathrm{Spec}(K)$, has a bijective correspondence with $\alg_n(K)$, the set of isomorphism classes of central simple algebras of degree $n$ over $K$. In particular, using the standard notation in \cite{Reichstein-ICM}, $\ed_k(\pgl_n) = \ed_k(\alg_n)$.

In good characteristic, we can follow Example 2.8 of \cite{Reichstein-ICM}. Let $p$ be a prime and let $k$ be a field containing a primitive $p$-th root of unity, $\omega$. Let $x_i$ and $y_i$ be algebraically independent indeterminates over $k$ and set $K=k(x_i,y_i)_{i=1}^\ell$. Consider the length $\ell$ generic symbol $K$-algebra $A_\ell = \otimes_{i=1}^\ell(x_i,y_i)_\omega$. $A_\ell$ is a central simple $K$-algebra of degree $p^\ell$ and exponent $p$. The essential dimension of $A_\ell$ as both an element of $\alg_{p^\ell}(K)$ and of ${}_p\br(K)$ is $2\ell$ as one might suspect \cite[2.6]{Reichstein-ICM}, giving a lower bound $\ed_k(\pgl_{p^\ell}) = \ed_k(\alg_{p^\ell})\geq 2\ell$.  

On the other hand, if the characteristic of $k$ is $p$ we can consider the analogous algebra $D_\ell = \otimes_{i=1}^\ell [x_i,y_i)$ over $K=k(x_i,y_i)_{i=1}^\ell$. In \cite[3.2]{baek} Baek shows $\ed_k(\dec_{p^\ell})\leq \ell+1$, assuming $k$ contains the field with $p^\ell$ elements.  In particular, the essential dimension of $D_\ell$ as an element of $\dec_{p^\ell}(K)$ (and hence also as an element of $\alg_{p^\ell,p}(K)$, $\alg_{p^\ell}(K)$ and ${}_p\br(K)$) is at most $\ell+1$.  We call $D_\ell$ the {\it length $\ell$ generic $p$-symbol} and motivation for this paper comes from finding its $p$-essential dimension as an element in ${}_p\br(K)$ (Corollary \ref{c34}).  

As noted in \cite[\S10.1]{MR1321649}, for a field $F$ of characteristic $p$ the Milne-Kato $p$-cohomology group $\H^{n+1}_p(F)$ is defined to be analogous to the Galois cohomology group $\H^{n+1}(F, \mu_p^{\otimes n})$ when characteristic $F\ne p$. This analogy is made precise in \cite{MR689394}. When $n=1$ these groups each realize the $p$-torsion in $\br(k(x_i,y_i)_{i=1}^\ell)$ and thus contain the classes of the generic symbol algebras $A_\ell$ (when $\mu_p\subset k$) and $D_\ell$ (when char$(k)=p$). In both types of cohomology one can generalize the notion of generic symbols to higher degrees. The main result of this paper finds a lower bound on the $p$-essential dimension of the length $\ell$ generic $p$-symbols in $\H^{n+1}_p(k_{\ell,n})$ when $k$ is algebraically closed of characteristic $p$. More specifically, fix $k$ algebraically closed of characteristic $p$ and for $x_i$ and $y_{i,j}$  algebraically independent indeterminates over $k$ set 
\begin{equation}
k_{\ell,n} = k(x_i,y_{i,j})_{1\leq i \leq \ell}^{1\leq j \leq n}
\label{e8080}
\end{equation}
so that $\trdeg_k(\kln) = \ell(n+1)$. The {\it length $\ell$ generic $p$-symbol of degree $n$}, $\gen_k(n+1,\ell,p)$, is defined as the class 
\begin{equation}
\gen_k(n+1,\ell,p) = \sum_{i=1}^\ell x_i\frac{dy_{i,1}}{y_{i,1}}\wedge\cdots\wedge\frac{dy_{i,n}}{y_{i,n}} \in \H^{n+1}_p(k_{\ell,n})
\label{e1111}
\end{equation}
(see \S\ref{s3}, \cite[\S10.1]{MR1321649}, \cite{I}, \cite{MR689394} for the definition of $\H^{n+1}_p$). Let $\ed_k(\gen_k(n+1,\ell,p))$ denote essential dimension and let $\ed(\gen_k(n+1,\ell,p);p)$ denote $p$-essential dimension of $\gen_k(n+1,\ell,p)$ as an element of $\H^{n+1}_p(k_{\ell,n})$. A lower bound on the $p$-essential dimension of $\gen_k(n+1,\ell,p)$ is our main result;

\begin{main}[Theorem~\ref{t1}]
Let $k$ be an algebraically closed field of characteristic $p$. For $\ell,n\geq 1$, 
\[\ed_k(\gen_k(n+1,\ell,p))\geq \ed_k(\gen_k(n+1,\ell,p);p)\geq \ell+n.\]
\end{main}

\noindent In degree 2, $\gen_k(2,\ell,p) = D_\ell$ and the theorem tells us that $\ed_k(D_\ell)\geq \ell+1$ as an element of $\H^2_p(k_{\ell,1}) = {}_p\br(k_{\ell,1})$. Combining this with the upper bound from \cite[3.2]{baek} we have

\begin{maincor}[Corollary \ref{c34}]
Let $k$ be an algebraically closed field of characteristic $p$. For $\ell \geq 1$, 
\[\ed_k(D_\ell) =\ed_k(D_\ell;p) =  \ell+1\]
where the essential dimensions are taken with respect to $D_\ell \in {}_p\br(k_{\ell,1})$.
\end{maincor}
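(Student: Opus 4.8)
The plan is to prove the equality by sandwiching $\ed_k(D_\ell;p)$ and $\ed_k(D_\ell)$ between two bounds both equal to $\ell+1$: the lower bound is the $n=1$ case of the Main Result, and the upper bound is Baek's theorem combined with the monotonicity of essential dimension under natural transformations of functors.

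First I would recall the general principle that for a natural transformation $F\to G$ of functors $\textbf{Fields}/k\to\textbf{sets}$, an element $a\in F(K)$ with image $b\in G(K)$ satisfies $\ed_k(b)\le\ed_k(a)$ and $\ed_k(b;p)\le\ed_k(a;p)$, since any field of definition of $a$ is also one for $b$. The algebra $D_\ell=\otimes_{i=1}^\ell[x_i,y_i)$ is by construction an element of $\dec_{p^\ell}(k_{\ell,1})$, and its image under the natural transformation $\dec_{p^\ell}\to{}_p\br$ sending a tensor product of degree-$p$ symbols to its Brauer class is, under the identification $\H^2_p={}_p\br$, precisely $\gen_k(2,\ell,p)$. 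Hence
\[
\ed_k(D_\ell)\ \le\ \ed_k\big(D_\ell\in\dec_{p^\ell}(k_{\ell,1})\big)\ \le\ \ed_k(\dec_{p^\ell})\ \le\ \ell+1,
\]
where the middle step is the definition of the essential dimension of the functor $\dec_{p^\ell}$ and the last is \cite[3.2]{baek}, applicable because $k$, being algebraically closed of characteristic $p$, contains $\mathbb{F}_{p^\ell}$. The same chain with $p$-essential dimension gives $\ed_k(D_\ell;p)\le\ell+1$.

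For the lower bound I would specialize Theorem~\ref{t1} to $n=1$. Writing $y_i$ for $y_{i,1}$ identifies $k_{\ell,1}$ with $k(x_i,y_i)_{i=1}^\ell$, and under $\H^2_p(F)\cong{}_p\br(F)$ the class $x_i\,dy_i/y_i$ is the symbol $p$-algebra $[x_i,y_i)$, so $\gen_k(2,\ell,p)=D_\ell$ in $\H^2_p(k_{\ell,1})={}_p\br(k_{\ell,1})$, exactly as noted after the Main Result. Theorem~\ref{t1} then yields $\ed_k(D_\ell;p)\ge\ell+n=\ell+1$, and since $p$-essential dimension never exceeds essential dimension, $\ed_k(D_\ell)\ge\ell+1$ as well. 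Chaining everything, $\ell+1\le\ed_k(D_\ell;p)\le\ed_k(D_\ell)\le\ell+1$, which forces both to equal $\ell+1$.

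Beyond Theorem~\ref{t1} itself there is no real obstacle: the corollary is a formal deduction once the theorem and the upper bound are in hand. The only points deserving a sentence of care are getting the contravariant direction of the functoriality inequality $\ed_k(b)\le\ed_k(a)$ right, the naturality of $\H^2_p={}_p\br$ so that it transports both the cohomology class and a field of definition between the two descriptions, and verifying that the hypothesis of \cite[3.2]{baek} is met for our $k$.
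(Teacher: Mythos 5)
Your proposal is correct and is essentially the paper's own argument: the paper likewise obtains the corollary by combining the $n=1$ case of Theorem~\ref{t1} (lower bound $\ell+1$) with Baek's upper bound $\ed_k(\dec_{p^\ell})\le \ell+1$, transported to ${}_p\br$ exactly as you describe. Your write-up merely makes explicit the functoriality and hypothesis checks that the paper leaves implicit.
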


When char$(k)=p$ the best known bounds on $\ed_k(\alg_{p^\ell,p^r})$ and $\ed_k(\alg_{p^\ell,p^r};p)$ are as follows.
\begin{itemize}
\item[-] In \cite[2.2]{baek} Baek gives a lower bound 
\[\ed_k(\alg_{p^\ell,p^r};p)\geq 3\]
on the $p$-essential dimension when $1\leq r<\ell$. This result holds regardless of the characteristic of $k$.
\item[-] By  \cite[Ex. 1.1]{MR2520892} for any field $k$ and any integers $1\leq m\leq n$ with $m|n$, $\ed_k(\alg_{n,m}) = \ed_k(\mathrm{GL}_n/\mu_m)$ and $\ed_k(\alg_{n,m};p) = \ed_k(\mathrm{GL}_n/\mu_m;p)$. Using this, recent work by Garibaldi and Guralnick, \cite[6.7]{GG}, gives an upper bound
\[\ed_k(\alg_{p^\ell,p^r})\leq p^{2\ell}-3p^\ell+p^{\ell-r}\]  for $p^\ell \geq 4$. This bound is also independent of the characteristic of $k$.
\end{itemize}

As a corollary to Corollary \ref{c34}, we improve on the lower bound of 3 when char$(k)=p$ and $r=1$.

\begin{maincor}[Corollary~\ref{main}]
Let $k$ be an algebraically closed field of characteristic $p$. 
\[\ed_k(\alg_{p^\ell,p};p)\geq \ell+1.\]
\end{maincor}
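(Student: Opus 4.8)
The plan is to deduce Corollary~\ref{main} directly from Corollary~\ref{c34} by a functorial comparison between the ``Brauer class'' functor ${}_p\br$ and the ``algebra'' functor $\alg_{p^\ell,p}$. Observe that the length $\ell$ generic $p$-symbol $D_\ell$, constructed over $K = k(x_i,y_i)_{i=1}^\ell$, is simultaneously an object of $\dec_{p^\ell}(K) \subset \alg_{p^\ell,p}(K)$ and, via the natural map $\alg_{p^\ell,p} \to {}_p\br$ sending an algebra to its Brauer class, the object whose image is $D_\ell \in {}_p\br(K) = \H^2_p(k_{\ell,1})$ appearing in Corollary~\ref{c34}. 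So $D_\ell$ has two essential dimensions attached to it: one as an element of $\alg_{p^\ell,p}(K)$ and one as an element of ${}_p\br(K)$.

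The key step is the general principle that essential dimension cannot increase under a morphism of functors: if $F \to G$ is a natural transformation of functors $\mathbf{Fields}/k \to \mathbf{sets}$ and $a \in F(K)$ has image $b \in G(K)$, then $\ed_k(b) \le \ed_k(a)$ and $\ed(b;p) \le \ed(a;p)$. Indeed, if $a$ descends to a subfield $K_0 \subset K$ with $\trdeg_k K_0 = \ed_k(a)$, then applying the natural transformation over $K_0$ produces a descent of $b$ to the same $K_0$; the $p$-analogue is identical after passing to a prime-to-$p$ extension. Applying this to $F = \alg_{p^\ell,p}$, $G = {}_p\br$, $a = D_\ell$, $b = D_\ell$, we get
\[\ed_k(D_\ell \in \alg_{p^\ell,p}(K)) \ge \ed_k(D_\ell \in {}_p\br(K)) \quad\text{and likewise for }p\text{-essential dimension.}\]
Combining with Corollary~\ref{c34}, which gives $\ed_k(D_\ell;p) = \ell+1$ as a Brauer class, yields $\ed(D_\ell;p) \ge \ell+1$ as an element of $\alg_{p^\ell,p}(K)$. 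Finally, since $\ed_k(\alg_{p^\ell,p};p) = \sup_{K/k,\ a \in \alg_{p^\ell,p}(K)} \ed(a;p) \ge \ed(D_\ell;p)$, we conclude $\ed_k(\alg_{p^\ell,p};p) \ge \ell+1$.

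I do not expect a serious obstacle here, since this is essentially a bookkeeping argument, but the one point requiring a little care is checking that the object called $D_\ell$ in the statement of Corollary~\ref{c34} really is the Brauer class of the algebra $D_\ell \in \dec_{p^\ell}(K)$ under the identification $\H^2_p(k_{\ell,1}) = {}_p\br(k_{\ell,1})$ — that is, that the generic $p$-symbol in Milne--Kato cohomology (\ref{e1111}) specializes, in degree $n=1$, to the Brauer class of the tensor product of symbol algebras $\otimes_{i=1}^\ell [x_i,y_i)$. This compatibility is exactly the $n=1$ case of the isomorphism of \cite{MR689394} identifying $\H^{2}_p(F)$ with ${}_p\br(F)$ and matching symbols with symbol algebras, already invoked in the introduction, so it may simply be cited. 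One should also note that the inequality only uses the lower bound half of Corollary~\ref{c34}, so the argument is robust.
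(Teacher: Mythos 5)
Your proposal is correct and is essentially the paper's own argument: the paper likewise deduces the bound by observing that $D_\ell$ satisfies $\ed_k(D_\ell;p)\geq \ell+1$ as an element of $\alg_{p^\ell,p}(k_{\ell,1})$ because it does so as an element of ${}_p\br(k_{\ell,1})$ by Corollary~\ref{c34}. You have merely made explicit the underlying functorial principle (essential dimension does not increase along a morphism of functors) that the paper leaves implicit.
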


\begin{proof} The algebra $D_\ell$ satisfies $\ed_k(D_\ell;p)\geq \ell+1$ as an element of $\alg_{p^\ell,p}(k_{\ell,1})$ since it satisfies the same inequality as an element of ${}_p\br(k_{\ell,1})$ by Corollary \ref{c34}.\end{proof}

\begin{remark}Milne-Kato cohomology groups have also been used to study essential dimension in bad characteristic in \cite{baek2016}.  Baek finds non-trivial cohomological invariants into Milne-Kato cohomology groups to prove the lower bound $\ed(\pgl_4)\geq 4$ over a field of characteristic 2.
\end{remark}

\section{Generic symbols with char$(k) \ne p$, methods and outline.} 
A  discussion of a lower bound for the essential dimension of generic symbols with char$(k)\ne p$ provides a proper overview of the char$(k)=p$ arguments and illustrates a major difficulty we will encounter when char$(k) = p$. Let $k$ be an algebraically closed field with char$(k)\ne p$. The Galois symbol gives the analogue of the generic $p$-symbols defined above.  That is, let $\kln$ be defined as in (\ref{e8080}) and let $h^{n+1}_{\kln,p}:K^M_{n+1}(\kln) \to \H^{n+1}(k_{\ell,n},\mu_p^{\otimes (n+1)})$ be the Galois symbol map as defined in \cite[4.6.4]{GS}. Define
\[\gen_k(n+1,\ell,p) = h^{n+1}_{k_{\ell,n},p}\left(\sum_{i=1}^\ell\{x_i,y_{i,1},\ldots,y_{i,n}\}\right).\]
In the case $n=1$ if we fix a primitive $p$-th root of unity $\omega$, and with it an isomorphism $\H^2(k_{\ell,1},\mu_p^{\otimes 2})\cong {}_p\br(k_{\ell,1})$, then $\gen_k(2,\ell,p) = [A_\ell]$ from above.
Using the methods of this paper we can find the same lower bound on the essential dimension of these generic symbols as elements of $\H^{n+1}(k_{\ell,n},\mu_p^{\otimes (n+1)})$ as in Theorem \ref{t1};

\begin{proposition}Let $\gen_k(n+1,\ell,p) \in \H^{n+1}(k_{\ell,n},\mu_p^{\otimes n+1})$ be defined as above. Then
 \begin{equation*}
 \ed_k(\gen(n+1,\ell,p))\geq\ed_k(\gen(n+1,\ell,p);p)\geq \ell+n.
 \end{equation*}
 \label{e887}
 \end{proposition}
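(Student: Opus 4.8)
The plan is to establish the lower bound $\ed_k(\gen(n+1,\ell,p);p) \geq \ell + n$ by a residue/specialization argument, following the strategy that this paper adapts to characteristic $p$ but which is cleaner in the prime-to-$p$ setting. Suppose for contradiction that the $p$-essential dimension is at most $\ell + n - 1$. Then, after passing to a finite prime-to-$p$ extension of $k_{\ell,n}$, the class $\gen_k(n+1,\ell,p)$ descends to a subfield $F \subseteq k_{\ell,n}$ with $\trdeg_k(F) \leq \ell + n - 1$; that is, there is $\alpha \in \H^{n+1}(F, \mu_p^{\otimes(n+1)})$ whose restriction to (the prime-to-$p$ extension of) $k_{\ell,n}$ equals the generic symbol. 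The goal is to derive a contradiction by exhibiting a chain of $\ell$ nested residues of the generic symbol that must all be nonzero, forcing the transcendence degree of any field of definition to be at least $\ell + n$.

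The key steps, in order, would be: (1) Realize $k_{\ell,n}$ as the function field of an iterated construction: over $k(x_i)_{i=1}^\ell$, adjoin the $y_{i,j}$ to build a tower of Laurent-series-like or rational-function-field extensions, so that each $y_{i,j}$ contributes a discrete valuation $v_{i,j}$ with residue field again of this form. (2) Compute the iterated residue: taking the residue of $\gen_k(n+1,\ell,p)$ along $y_{\ell,1}, \dots, y_{\ell,n}$ successively kills all summands with index $i < \ell$ (whose symbols are units at these valuations) and sends the last summand $x_\ell \frac{dy_{\ell,1}}{y_{\ell,1}}\wedge \cdots$ to the class $x_\ell \in \H^1$ of the residue field, which by induction on $\ell$ sits on top of a field carrying $\gen_k(n+1,\ell-1,p)$ after a further residue in $x_\ell$ — here I would set up the bookkeeping so that a total of $n$ residues peel off the $y_{\ell,j}$ and one more residue in $x_\ell$ reduces $\ell$ by one, yielding $n+1$ residue operations per stage but sharing one transcendental coordinate between stages, so the net count is $\ell + n$. (3) Invoke the standard fact (valid in both characteristics via the compatibility of residues with restriction, e.g.\ \cite[\S7 or the relevant section]{GS}) that if $\alpha$ is defined over a subfield $F$ and the residue of $\res{}(\alpha)$ along some valuation is nonzero, then that valuation restricts nontrivially to $F$, so each nonzero residue forces a new element of a transcendence basis of $F$. (4) Verify the base case: $\gen_k(1, 0, p)$ or rather the terminal class after all $\ell$ reductions is a nonzero symbol of degree $n$ (or $n+1$ with the $x$-coordinate) over an algebraically closed field, which is nonzero by a weight/nondegeneracy argument such as the nonvanishing of the top residue in a pure transcendental tower, or by comparison with the known nontriviality of $A_\ell$ when $n = 1$.

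The main obstacle I anticipate is \textbf{step (2)}: correctly organizing the iterated-residue computation so that the residues genuinely remain nonzero at every stage and the transcendence-degree count comes out to exactly $\ell + n$ rather than something larger or smaller. The subtlety is that the summands $x_i \frac{dy_{i,1}}{y_{i,1}} \wedge \cdots \wedge \frac{dy_{i,n}}{y_{i,n}}$ interact: when one takes a residue along $y_{\ell,1}$, the surviving contributions from other indices must be shown to be units (not just ``generically'' but at the specific valuation chosen), and one must choose the order of valuations and the intermediate residue fields so that the class $\gen_k(n+1,\ell-1,p)$ reappears on the nose. In the prime-to-$p$ case this is manageable because the symbol calculus for $\H^{n+1}(-,\mu_p^{\otimes(n+1)})$ and its residues is well developed and behaves well under the Galois symbol map $h^{n+1}$; the nontrivial content of the present paper is precisely that the analogous residue calculus works in Milne-Kato cohomology in characteristic $p$ (the content of Theorem~\ref{t1}), so for Proposition~\ref{e887} I would essentially transcribe the characteristic-$p$ argument of \S\ref{s3}, replacing differential-form residues with the classical tame/Galois-cohomology residues and citing \cite{GS} at each point where \cite{MR1321649} or \cite{I} is cited in the main proof.
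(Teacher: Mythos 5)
There is a genuine gap in your step (2), and it is the heart of the matter: an iterated chain of (second) residues cannot detect the length $\ell$. The residue of $\gen_k(n+1,\ell,p)$ along the $y_{\ell,n}$-adic valuation is (up to a unit multiple) the single symbol $h^n(\{x_\ell,y_{\ell,1},\ldots,y_{\ell,n-1}\})$; every summand with $i<\ell$ is unramified there and is annihilated. Continuing with residues along $y_{\ell,n-1},\ldots$ therefore yields a chain of at most $n+1$ nonzero classes ending in degree $0$ --- a class in $\H^{n+1}$ simply does not support ``$n+1$ residue operations per stage'' for $\ell$ stages, and the length $\ell-1$ symbol never reappears from ``a further residue in $x_\ell$.'' A pure residue chain proves only the bound $n+1$, i.e.\ the $\ell=1$ case. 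The missing ingredient is the specialization (first residue) map $s_w^{n+1}$: it is the specialization, not the residue, that carries $\gen_k(n+1,\ell-1,p)$ down to the residue field, and the induction in the paper's proof is on $\ell$ via this map. Concretely, the paper shows (i) the valuation $v$ extending $v_{\ell,n}$ restricts nontrivially to the field of definition $E$, because $\partial_v^{n+1}$ of the restricted class is nonzero (this nonvanishing is a separate induction on $n$), so $\trdeg_k(\overline{E})=\ell+n-2$; (ii) $p\nmid e(v/w)$, again because the residue is nonzero; and (iii) via $s_v^{n+1}(\res{K}(g)) = s_w^{n+1}(g)+\partial_w^{n+1}(g)\cup(u)$, the specialization of $g$ realizes a descent of the length $\ell-1$ generic symbol to a field of transcendence degree $\ell+n-2$. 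Step (iii) carries a correction term $\partial_w^{n+1}(g)\cup(u)$ that your outline does not account for; the paper kills it by passing to the algebraic closure $k'$ of $k(x_\ell,y_{\ell,1},\ldots,y_{\ell,n-1})$, which splits $\partial_w^{n+1}(g)$, and comparing over the composites $K'=\overline{K}\cdot k'_{\ell-1,n}$ and $E'=\overline{E}\cdot k'$. Without the specialization map and this base change the induction on $\ell$ does not close.

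A smaller point on your step (4): the paper's base case is not a ``nondegeneracy of the top residue'' argument. For $\ell=1$ it uses that any $E$ with $\trdeg_k(E)\leq n$ over an algebraically closed field satisfies $\H^{n+1}(E,\mu_p^{\otimes(n+1)})=0$ (cohomological dimension), which reduces the problem to showing $e\,\res{K}(\gen_k(n+1,1,p))\neq 0$ for prime-to-$p$ extensions $K$ of purely transcendental extensions of $k_{1,n}$; that nonvanishing is proved by a separate induction on $n$ using a single residue and bottoms out at the nontriviality of the symbol algebra $A_1$.
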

\begin{proof}  The proof is by induction on the length, $\ell$. If the $p$-essential dimension is less than $n+1$ then there exists a prime to $p$ field extension $K/k_{1,n}$, a subfield $k\subset E \subset K$ with $\trdeg_k(E)=n$ and $g \in \H^{n+1}(E,\mu_p^{\otimes n+1})$ so that $\res{K}(\gen_k(n+1,1,p)) = \res{K}(g)$. Any such field $E$ satisfies $\H^{n+1}(E,\mu_p^{\otimes(n+1)})=0$ \cite[6.5.14]{MR2392026}. Thus, to finish the case $\ell=1$, it is enough to show the following lemma.
\begin{lemma} Let $K/k_{1,n}(z_1,\ldots,z_r)$ be a prime to $p$ extension with $z_i$ algebraically independent over $k_{1,n}$ and $e$ an integer with $p\nmid e$. Then $e\,\res{K}(\gen_k(n+1,1,p)) \ne 0$. \label{newlemma}
\end{lemma}
\begin{proof}[Proof of lemma] As mentioned above, when $n=1$, the class of $\gen_k(2,1,p) = [(x_1,y_1)_\omega] = [A_1]$ is non-trivial in ${}_p\br(k(x_1,y_1))$. Moreover, for any integer $e$ with $p\nmid e$, $e\,\res{K}[A_1] \in{}_p\br(K)$ with $K$ as in the statement of the lemma is non-trivial \cite[3.6 \& 3.15b]{MR1692654}. Fix $n>1$, $K$ as in the statement of the lemma and assume $e\,\res{K'}(\gen_k(n_0+1,1,p)) \ne 0$ for all $n_0<n$ and all $K'$ as in the statement of the lemma. 

Let $(K,v)$ be an extension of $(k_{1,n}(z_1,\ldots,z_r),v_{1,n})$, where $v_{{1,n}}$ is the $y_{1,n}$-adic valuation on $k_{1,n}(z_1,\ldots,z_r)$, such that $e(v/v_{1,n})$ and $f(v/v_{1,n})$ are each prime to $p$. Set $\overline K$ and $\overline k_{\ell,n}$ to be the residue fields, respectively. Let $\xi = \{x_1,y_{1,1},\ldots,y_{1,n}\}\in K^M_{n+1}(k_{1,n})$ so that $\gen_k(n+1,1,p) = h_{k_{1,n},p}^{n+1}(\xi)$. If $e\,\res{K}(\gen(n+1,1,p)) = 0$ then the residue $\partial^{n+1}_v(e\,\res{K}(h^{n+1}(\xi)))=0$, \cite[6.8.5]{GS}.  The Galois symbol, residue map \cite[6.8.5]{GS} and tame symbol $\partial^M:K^M_n(K) \to K^M_{n-1}(\overline K)$ \cite[7.1]{GS} act as follows with respect to restriction of scalars:
\begin{eqnarray}
0 &=& \partial^{n+1}_v(e\,\res{K}(h^{n+1}(\xi)))\label{e454}\\
&=&\partial^{n+1}_v(e\,h^{n+1}(\res{K}(\xi)))\nonumber\\
&=&e\,h^n(\partial^M(\res{K}(\xi))) \qquad \textrm{(\cite[7.5.1]{GS})}\nonumber\\
&=&e\,h^n(e(v/v_{1,n})\,\res{\overline K}(\partial^M(\xi)))\qquad \textrm{(\cite[7.1.6(2)]{GS})}\nonumber\\
&=&e\,e(v/v_{1,n})\,h^n(\res{\overline K}(\{x_1,y_{1,1},\ldots,y_{1,n-1}\})\nonumber\\
&=& e\,e(v/v_{1,n})\,\res{\overline K}(h^n(\{x_1,y_{1,1},\ldots,y_{1,n-1}\}))\nonumber\\
&=& e\,e(v/v_{1,n})\,\res{\overline K}(\gen_k(n,1,p))\label{e44}
\end{eqnarray}
Since $e\,e(v/v_{1,n})$ is prime to $p$, $\overline k_{1,n} \cong k_{1,n-1}$ and $\overline K/ \overline k_{1,n-1}(z_1,\ldots,z_r)$ is a prime to $p$ extension, by the induction hypothesis (\ref{e44}) is non-trivial, a contradiction to (\ref{e454}). 
\end{proof}

Fix $\ell>1$ and assume the theorem holds for $\gen_k(n+1,\ell_0,p)$ for all $\ell_0<\ell$. Let $K/\kln$ be a prime to $p$ field extension, $k\subset E\subset K$ a field of transcendence degree $\ell+n-1$ over $k$ and $g \in \H^{n+1}(E,\mu_p^{\otimes(n+1)})$ such that $\res{K}(g) = \res{K}(\gen_k(n+1,\ell,p))$. As above, let $(K,v)$ be an extension of $(\kln,v_{{\ell,n}})$ with $e(v/v_{\ell,n})$ and $f(v/v_{\ell,n})$ prime to $p$. Let $w=v|_E$. The valuation $w$ cannot be trivial on $E$ because if it were the residue $\partial^{n+1}_v(\res{K}(\gen_k(n+1,\ell,p)))\in\H^n_p(\overline K,\mu_p^{\otimes n})$ would be zero. However, a computation similar to (\ref{e454})-(\ref{e44}) shows that 
\begin{equation}\partial^{n+1}_v(\res{K}(\gen_k(n+1,\ell,p))) = e(v/v_{\ell,n})\,\res{\overline K}(h^{n}_{\kln,p}(\{x_\ell,y_{\ell,1},\ldots,y_{\ell,n-1}\})).\label{e8989}\end{equation}
After renumbering, $h^{n}_{\kln,p}(\{x_\ell,y_{\ell,1},\ldots,y_{\ell,n-1}\}) = \gen_k(n,1,p)$ and $\overline K$ is a prime to $p$ extension of $\okln$ which is a purely transcendental extension of $k_{1,n-1}$. Therefore, by Lemma \ref{newlemma}, the right hand side of (\ref{e8989}) is non-zero, a contradiction to the triviality of $w$. 

Two crucial things happen when $w$ is nontrivial: first $\trdeg(\overline E)= \ell+n-2$ and second, the specialization $s^{n+1}_w$ and residue $\partial^{n+1}_w$ of $g$ are defined. This is a major point, we {\it can} take the specialization and residue (called the first and second residue in case char$(k)=p$) of $g$ because these maps are defined on all of $\H^{n+1}(E,\mu_p^{\otimes(n+1)})$. In the characteristic $p$ case, the first and second residues are only defined on the  0-th piece of the filtration of Izhboldin (see \S \ref{s3} or \cite[\S2]{I}) and though $\gen_k(n+1,\ell,p)$ is easily shown to be contained within the 0-th piece, there is no easy reason that $g$, the element it descends to, is contained within the 0-th piece. 

Back to the char$(k)\ne p$ case. Let $\pi$ be a uniformizer for $(K,v)$ and $\tau = u\pi^e$ a uniformizer for $(E,w)$ with unit $u \in K$. Under  extension of scalars $E\subset K$ the specialization and residue maps satisfy
\begin{eqnarray}
\partial^{n+1}_v(\res{K/E}(g))&=&e\,\partial^{n+1}_w(g)\label{e998}\\
s^{n+1}_v(\res{K/E}(g))&=&s^{n+1}_w(g)+ \partial^{n+1}_w(g)\cup (u)\label{e990}
\end{eqnarray}
If $p|e$ then the right hand side of (\ref{e998}) is zero whereas, since $\res{K/E}(g) = \res{K}(\gen_k(n+1,\ell,p))$, the left hand side is non-zero (\ref{e8989}). 
Therefore $p\nmid e$.  Since $p\nmid e$, $\partial^{n+1}_w(g) = e^{-1}h^{n+1}_{\kln,p}(\{x_\ell,y_{\ell,1},\ldots,y_{\ell,n-1}\})$ is split in the algebraic closure $k'=k(x_\ell,y_{\ell,1},\ldots,y_{\ell,n-1})^{alg}$. Replace the algebraically closed field $k$ with the algebraically closed field $k'$ and take composite fields: $K'=\overline K\cdot \kpln \subset \overline k_{\ell,n}^{alg}$ and $E' = \overline E \cdot k'$. Our field diagram looks like:
\[
\xymatrix@1@C=.1in@R=.2in{
     &K'\ar@{-}[dr]\\
E'\ar@{-}[ur]&\kpln\ar@{-}[u]&\overline K\\
                    & k'\ar@{-}[ul]\ar@{-}[u] &\okln\ar@{-}[u]\ar@{-}[ul]&\overline E\ar@{-}[ul]\\
                    &&k\ar@{-}[ur]\ar@{-}[u]\ar@{-}[ul]
}
\] 
Note two things here; since $\trdeg_k(\overline E) = n+\ell-2$, $\trdeg_{k'}(E')\leq n+\ell-2$ and since $p\nmid [\overline K:\okln]$, $p\nmid [K':k'_{\ell-1,n}]$. Let $y_{\ell,n}=u'\pi^{e'}$ with $u'$ a unit in $K$ and $p \nmid e'=e(v/v_{\ell,n})$. Since  
\[s_v^{n+1}(\res{K}(\gen_k(n+1,\ell,p))) = \res{\overline K}(\gen_k(n+1,\ell-1,p))+\partial^{n+1}_{v_{\ell,n}}(\gen_k(n+1,\ell,p))\cup (\bar u')\]
and $\res{K'}(\partial^{n+1}_{v_{\ell,n}}(\gen_k(n+1,\ell,p))) = 0$, using (\ref{e990}) we have
\begin{eqnarray*}
\res{K'}(\gen_k(n+1,\ell-1,p))&=&\res{K'}(s_v^{n+1}(\res{K}(\gen_k(n+1,\ell,p)))\\
&=&\res{K'}(s_v^{n+1}(\res{K}(g)))\\
&=&\res{K'}(s_w^{n+1}(g)+\partial^{n+1}_w(g)\cup(\bar u))\\
&=&\res{K'}(s_w^{n+1}(g))
\end{eqnarray*}
Since $\res{K'}(\gen_k(n+1,\ell-1,p)) = \res{K'}(\gen_{k'}(n+1,\ell-1,p))$ and $\res{K'}(s_w^{n+1}(g)) = \res{K'}(\res{E'}(s_w^{n+1}(g)))$, this shows that after the prime to $p$ extension $K'/k'_{\ell,-1,}$, the generic $p$-symbol, $\gen_{k'}(n+1,\ell-1,p)$ of length $\ell-1$ descends to the field $E'$ with $\trdeg_{k'}(\overline E')\leq n+\ell-2$, contradicting the induction hypothesis.
\end{proof}
\begin{remark}These arguments are reproduced (with more detail) in the proof of Theorem \ref{t1} in the bad characteristic case. Moreover, the lower bound in Proposition \ref{e887} is not optimal at least in the case $n=1$ and $\ell>1$ by the remark in section 1 \cite[2.6]{Reichstein-ICM} and probably more generally.
\end{remark}

{\bf Methods and outline.} As mentioned in the proof of Proposition \ref{e887}, much of the difficulty of the proof of Theorem \ref{t1} lies in the need to reduce to the case when $g$ is in the 0-th piece of Izhboldin's filtration on $p$-cohomology. This is done by building on the work done by Babic and Chernousov in \cite{bc}. In their paper so-called canonical monomial quadratic forms
\[t_1[1,x]\oplus t_2[1,x]\oplus\cdots\oplus t_n[1,x]\oplus \mathbb H\oplus\cdots\oplus \mathbb H\]
over $k(t_1,\ldots,t_n,x)$ with char$(k)=2$ are shown to be incompressible. Here $[a,b]$ is the quadratic form $ax^2+xy+by^2$ in characteristic 2. The incompressibility of these forms gives them a lower bound on $\ed_k(\mathbf O(V,g))$ where $g$ is any non-degenerate quadratic form on a vector space $V$ over $k$. In the present paper the techniques from \cite[\S7-\S12]{bc} are adapted to both the Milne-Kato $p$-cohomology and the generic forms in (\ref{e1111}) to get the lower bound in Theorem \ref{t1}. 

In \S\ref{s2} differential bases over fields of characteristic $p$ are reviewed and  \cite[11.1]{bc}  is generalized in Proposition \ref{p1} to arbitrary prime characteristic. Proposition \ref{p1} serves in this paper, as 11.1 does in \cite{bc}, as a keystone of the proofs on essential dimension that follow.  In \cite[\S 8]{bc} Babic and Chernousov use a presentation of  quadratic forms in the Witt group over a field of Laurent series by Arason. In this paper we instead work with Izhboldin's filtration on $\H^{n+1}_p(F)$ for $F$ a Laurent series field \cite[\S2]{I}. When $p=2$, $\H^{n+1}_2(F)$ is isomorphic to a homogeneous component of the graded Witt group (\cite{MR662605}), but for $p>2$ there is no such connection. The appropriate adaptations for $p$-cohomology are done in \S\ref{s3}. In Lemma \ref{l1} we show there is a ``unique decomposition'' of $p$-cohomology classes, similar to the unique decomposition in \cite[8.2]{bc}. Work is done in Proposition \ref{p2} to understand how one manipulates a $p$-cohomology class into its ``unique decomposition''.  The proof of the main theorem and corollaries are in section \ref{s4}.

\section{Differential bases in characteristic $p$}\label{s2}
Throughout this section let $k$ be a perfect field of characteristic $p$ and $K$ a field containing $k$ with $\trdeg_{k}K = r>0$. Let $v$ be a geometric valuation on $K$ of rank 1 (so that $\trdeg_k(\overline K) = r-1$ where $\overline K$ is the residue field of $K$ \cite{MR1914006}). Note that with this set up $K/K^p$ is a defectless extension, that is, $[v(K):v(K^p)] = p$ and $[\overline K: \overline K^p] = p^{r-1}$ so that $p^r=[K:K^p] = [v(K):v(K^p)]\cdot [\overline K:\overline K^p]$. Set $\pi$ as a uniformizer for $v$ and $R\subset K$ the corresponding valuation ring. 

As in \cite[\S9]{bc} we will say that  a differential basis $\{a_1,\ldots, a_r\}$ for $K/k$ {\it comes from} $\overline K$ if there exists an $i_0$ with $a_{i_0}$ a uniformizer for $K$, $a_j \in R^\times$ for $j \ne i_0$ and $\{\overline a_j\,|\,j \ne i_0\}$ is a differential basis for $\overline K/k$. (See \cite[16.5]{E} for the equivalence of differential bases and $p$-bases.) 

\begin{example}
Let $k$ be a field of characteristic $p$. Take $k(t_1,\ldots, t_r)$ to be the rational function field in $r$ variables over $k$, $v$ the $t_r$-adic valuation, $\pi = t_r$ and $R=k(t_1,\ldots, t_{r-1})[t_r]$. In this case $\{t_1,\ldots, t_r\}$ forms a differential basis of $k(t_1,\ldots,t_r)/k$ coming from $\overline{k(t_1,\ldots,t_{r})} \cong k(t_1,\ldots,t_{r-1})$. Let $K/k(t_1,\ldots,t_r)$ be a prime to $p$ field extension. There exists an extension of the valuation $v_{t_r}$ on $k(t_1,\ldots,t_r)$ to a discrete valuation $v$ on $K$ with residue degree $f(v/v_{t_r})$ and ramification index $e(v/v_{t_r})$ both prime to $p$ \cite[16.6.3]{MR2752311}. Since $K/k(t_1,\ldots,t_r)$ is a finite prime to $p$ extension, it is separable algebraic and thus the differential basis $\{t_1,\ldots,t_r\}$ of $k(t_1,\ldots,t_r)$ is also a differential basis of $K/k$ (see \cite[8.6]{MR2058673}). Let $\tau \in K$ be a uniformizer for the extended valuation $v$. The set $\{t_1,\ldots,t_{r-1},\tau\}$ is a differential basis of $K/k$ which comes from $\overline K$ since $\{t_1,\ldots,t_{r-1}\}$ is a differential basis for $k(t_1,\ldots,t_{r-1})$ and hence also for the prime to $p$ extension $\overline K$ (see \cite[section 9]{bc}). Note also that $K$ has transcendence degree $r$ over $k$ and the valuation $v$ on $K$ is geometric of rank 1. 
\label{ex1}
\end{example}

\begin{proposition}[Generalization of {\cite[11.1]{bc}} to characteristic $p$] Let $k$ be a perfect field of characteristic $p$ and $K$ a field containing $k$ with $\trdeg_{k}K = r>0$. Let $v$ be a geometric valuation on $K$ of rank 1. Let $E\subset K$ be a subfield containing $k$ with $\trdeg_k(E)=s<r=\trdeg_k(K)$. Then there exists a differential basis $\{a_1,\ldots, a_r\}$ of $K/k$ coming from $\overline K$ such that $E\subset K^p(a_1,\ldots, a_t)$ with $t \leq s <r$.
\label{p1}
\end{proposition}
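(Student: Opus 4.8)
The plan is to build the differential basis $\{a_1,\dots,a_r\}$ of $K/k$ in stages, starting from the residue side. First I would choose a transcendence basis of $E/k$, say $\{e_1,\dots,e_s\}$, and note that since $\trdeg_k(E)=s<r$, after possibly replacing $E$ by a finitely generated subfield still containing a fixed finite generating set, we may assume $E$ is finitely generated over $k$. The key structural fact to exploit is the decomposition $p^r=[K:K^p]=[v(K):v(K^p)]\cdot[\overline K:\overline K^p]=p\cdot p^{r-1}$ recorded just before the proposition: a $p$-basis (equivalently differential basis) of $K/k$ consists of a uniformizer $\pi$ together with $r-1$ units whose residues form a $p$-basis of $\overline K/k$. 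So the real content is to arrange that the chosen lift interacts well with $E$, i.e. that $E$ lands in $K^p$ together with only $t\le s$ of the basis elements.

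The main step is a dimension count on the residue field. Consider the valuation $w=v|_E$. There are two cases. If $w$ is nontrivial, then $\trdeg_k(\overline E)\le s-1$, and I would recursively/inductively produce in $\overline K$ a $p$-basis $\{\overline a_1,\dots,\overline a_{r-1}\}$ coming from the residue data such that $\overline E\subset \overline K^p(\overline a_1,\dots,\overline a_{t'})$ with $t'\le s-1$; then lift each $\overline a_j$ to a unit $a_j\in R^\times$, take $a_{i_0}=\pi$ a uniformizer for $v$, and check that a uniformizer for $E$ together with its residue contributions places $E$ inside $K^p(a_1,\dots,a_{t'},\pi)$, giving $t=t'+1\le s$. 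If $w$ is trivial on $E$, then $E\subset R^\times\cup\{0\}$ maps injectively (as a field) into $\overline K$, and I would instead arrange the $p$-basis of $\overline K/k$ so that the image $\overline E$ lies in $\overline K^p(\overline a_1,\dots,\overline a_s)$ — possible because $\trdeg_k(\overline E)\le s$ inside $\overline K$ — and lift as before with $a_r=\pi$; then $E\subset K^p(a_1,\dots,a_s)$ directly, so $t=s$. In both cases the reduction is to the analogous statement one transcendence degree lower, with the residue field $\overline K$ playing the role of $K$; the base case $s=0$ is immediate since then $E\subset \overline{k}\subset K^p\cdot k=K^p$ (as $k$ is perfect, $k\subset K^p$), so $t=0$ works with any $p$-basis coming from $\overline K$.

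The technical heart — and the step I expect to be the main obstacle — is verifying that the lifted set $\{a_1,\dots,a_r\}$ is genuinely a differential basis of $K/k$, not merely a set whose residues work and which contains a uniformizer. This requires the counting identity: $\{a_1,\dots,a_r\}$ is a $p$-basis iff the $a_i$ are $p$-independent iff $[K^p(a_1,\dots,a_r):K^p]=p^r$; one gets this from $[K^p(a_1,\dots,a_r):K^p]=[v(K^p(\dots)):v(K^p)]\cdot[\text{res}:\overline K^p]\ge p\cdot p^{r-1}$ using that $a_{i_0}=\pi$ contributes ramification and the $\overline a_j$ are $\overline K^p$-independent. I would cite \cite[16.5]{E} and the defectlessness of $K/K^p$ noted in the text, mirroring the argument of \cite[\S9, 11.1]{bc}; the characteristic-$p$ (rather than characteristic-$2$) generalization is purely formal here since the numerics $p=[v(K):v(K^p)]$ and $p^{r-1}=[\overline K:\overline K^p]$ are exactly as in \cite{bc} with $2$ replaced by $p$. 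Finally I would package the containment $E\subset K^p(a_1,\dots,a_t)$ together with $t\le s<r$, completing the induction; the inequality $t\le s$ is forced at each stage because we never introduce more than $\trdeg$-many new basis elements to capture $E$, and a uniformizer of $E$ is only needed in the nontrivial-valuation case where the residue count has already dropped by one.
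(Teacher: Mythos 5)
There is a genuine gap at the main step, and it comes from the direction of your reduction. You propose to first arrange a containment at the level of residue fields, $\overline E\subset \overline K^p(\overline a_1,\ldots,\overline a_{t'})$, then lift the $\overline a_j$ to arbitrary units $a_j\in R^\times$, and conclude $E\subset K^p(a_1,\ldots,a_{t'},\pi)$ (resp.\ $E\subset K^p(a_1,\ldots,a_s)$ in the trivial-valuation case). That implication does not hold for arbitrary lifts: knowing where the \emph{residues} of elements of $E$ live says nothing about where the elements themselves live, because the reduction map $R\to\overline K$ has an enormous kernel. Concretely, take $K=k(x,y,\pi)$ with the $\pi$-adic valuation and $E=k(x+y\pi)$; then $v|_E$ is trivial, $\overline E=k(\bar x)\subset\overline K^p(\bar x)$, but with the natural lift $a_1=x$ one has $K^p(a_1)=k(x,y^p,\pi^p)$, which does not contain $x+y\pi$. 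The containment can only be rescued by choosing the lift inside $E$ itself (here $a_1=x+y\pi$), and once you insist on that you are no longer reducing to the residue field at all. The step you flag as "the technical heart" (that the lifted set is a genuine differential basis) is handled correctly by your counting argument and by defectlessness; the real obstacle is the lifting of the containment, which you treat as a routine check.

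The paper's proof runs in the opposite direction and never lifts anything from $\overline K$. It starts from a $p$-basis $\{c_1,\ldots,c_s\}$ of $E/k$ (of size $s$ because $k$ is perfect and $\trdeg_k(E)=s$), sets $L=K^p(c_1,\ldots,c_s)\supseteq E^p(c_1,\ldots,c_s)=E$, extracts a minimal generating subset $c_1,\ldots,c_t$ of $L$ over $K^p$, and extends the chain $K^p\subset K^p(c_1)\subset\cdots\subset L$ to a full chain of degree-$p$ extensions $F_0\subset\cdots\subset F_r=K$. The residue fields are consulted only to locate the unique step $F_{i_0-1}\subset F_{i_0}$ at which the residue extension collapses; there, defectlessness of $K/K^p$ produces $\gamma\in F_{i_0}$ with $p\nmid v(\gamma)$ and hence a uniformizer $a_{i_0}=\pi^{p\alpha}\gamma^{\beta}$ generating that same step, while at every other step $a_i$ is chosen inside $F_i\setminus F_{i-1}$ so that it still generates $F_i$ over $F_{i-1}$. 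With this arrangement $K^p(a_1,\ldots,a_t)=F_t=L\supseteq E$ holds by construction, and no residue-to-$K$ lifting is ever needed. To repair your argument you would have to replace the residue-field reduction by this bottom-up construction starting from a $p$-basis of $E$.
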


\begin{proof}
(Follows \cite[11.1]{bc}) Since $k$ is perfect we can fix a $p$-basis $\{c_1,\ldots, c_s\}$ of $E/k$ so that $E=E^p(c_1,\ldots, c_s)$. Set $L=K^p(c_1,\ldots,c_s)$ so that $E\subset L$. After reordering if necessary let $c_1,\ldots, c_t$ be a minimal system of generators for $L$ over $K^p$. Let $F_0 = K^p\subset F_1\subset \cdots \subset F_{r}=K$ be any chain of degree $p$ extensions which are built using the $c_i$:
\[F_0=K^p\subset F_1 = F_0(c_1)\subset F_2 = F_1(c_2)\subset \cdots \subset F_{t} = F_{t-1}(c_t) = L\]
Consider the corresponding chain of residue fields
\[\overline K^p = \overline F_0\subset \overline F_1\subset \overline F_2\subset \cdots \subset \overline F_t = \overline L\subset \overline F_{t+1}\subset \cdots \subset \overline F_N = \overline K\]
Since $v$ is geometric, $[\overline K:\overline K^p] = p^{r-1}$, showing that exactly one of these $r$ residue extensions is trivial and the rest have degree $p$. For each nontrivial extension choose $\overline a_i \in \overline F_i \backslash \overline F_{i-1}$ and any lift $a_i \in F_i\backslash F_{i-1}$. This is the part of the differential basis that comes ``from $\overline K$''. Let $\overline F_{i_0} = \overline F_{i_0-1}$ be the collapsed part of the residue fields. We need to find a uniformizer $a_{i_0} \in F_{i_0}$ for $K$ which completes the differential basis. Since $K/K^p$ is defectless, the subextension $F_{i_0-1}\subset F_{i_0}$ is also defectless, so that 
\[p=[F_{i_0}:F_{i_0-1}] = [v(F_{i_0}):v(F_{i_0-1})]\cdot [\overline F_{i_0}:\overline F_{i_0-1}]= [v(F_{i_0}):v(F_{i_0-1})].\]
In particular we can find $\gamma \in F_{i_0}$ with $p\nmid v(\gamma)$. Take $\alpha, \beta \in \Z$ with $\alpha p+\beta v(\gamma) = 1$ and set $a_{i_0} = \pi^{p\alpha}\gamma^\beta$. Note that $v(a_{i_0}) = 1$ and $F_{i_0-1}(a_{i_0}) = F_{i_0-1}(\gamma) = F_{i_0}$. Hence $\{a_1,\ldots,a_r\}$ forms a differential basis of $K/k$ coming from $\overline K$ with $E\subset L=K^p(a_1,\ldots,a_t)$ and $t\leq s <r$.
\end{proof}

\begin{remark} Note that if $v|_E$ has ramification index a multiple of $p$, then each $c_i$ has value a multiple of $p$. In particular, in the proof of Proposition \ref{p1}, the collapse $\overline F_{i_0} = \overline F_{i_0-1}$ must happen with $i_0>t$. Therefore in this case, for $i\leq t$, $a_i \in R^\times$.
\label{r3}
\end{remark}

As a result of Proposition \ref{p1} we will be interested in subfields of the form $L = K^p(a_1,\ldots, a_s)\subset K = K^p(a_1,\ldots, a_r)$ with $\{a_i\}_{i=1}^r$ a differential basis of $K$. The following Lemma will be used in the proof of Theorem \ref{t22}. Set $\Lambda_{s} = \Z_p^{s}$ and use multi-index notation $e=(e_1,\ldots, e_{s}) \in \Lambda_{s}$ to write $a^e := a_1^{e_1}\cdots a_{s}^{e_{s}}$. In this way the set $\{a^e\,|\,e \in \Lambda_{s}\}$ forms a $K^p$-basis for $L$.

\begin{lemma} Let $L = K^p(a_1,\ldots, a_s)\subset K = K^p(a_1,\ldots, a_r)$ with $\{a_i\}_{i=1}^r$ a differential basis of $K$. If $s<n$ then the restriction of scalars map $\Omega^{n}_L \to \Omega^{n}_K$ is the zero map.
\label{l999}
\end{lemma}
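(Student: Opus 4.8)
The plan is to compute the image of a basis of $\Omega^n_L$ under the restriction-of-scalars map $\Omega^n_L \to \Omega^n_K$ and show every basis element dies. The starting point is the observation recorded just before the statement: since $\{a_1,\ldots,a_r\}$ is a differential basis (equivalently, a $p$-basis) of $K/k$, the forms $\frac{da_1}{a_1},\ldots,\frac{da_r}{a_r}$ (or just $da_1,\ldots,da_r$) constitute a $K$-basis of $\Omega^1_K$, so $\Omega^n_K$ is free over $K$ on the wedges $da_{i_1}\wedge\cdots\wedge da_{i_n}$ with $i_1<\cdots<i_n$. Likewise, because $\{a^e : e\in\Lambda_s\}$ is a $K^p$-basis of $L = K^p(a_1,\ldots,a_s)$, the set $\{a_1,\ldots,a_s\}$ is a $p$-basis of $L/k$, hence $\Omega^1_L$ is free over $L$ on $da_1,\ldots,da_s$, and $\Omega^n_L$ is free over $L$ on the wedges $da_{i_1}\wedge\cdots\wedge da_{i_n}$ with $1\le i_1<\cdots<i_n\le s$.

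**The key point** is now purely combinatorial: under $\Omega^n_L\to\Omega^n_K$, the generator $da_{i_1}\wedge\cdots\wedge da_{i_n}$ of $\Omega^n_L$ (with all indices $\le s$) maps to the element of $\Omega^n_K$ denoted by the same symbol. But an element of $\Omega^n_L$ is an $L$-linear combination of such wedges with $n$ distinct indices drawn from $\{1,\ldots,s\}$; if $s<n$ there are no such index sets, so $\Omega^n_L=0$ as an $L$-module, and a fortiori the map to $\Omega^n_K$ is zero. So the lemma is essentially immediate once one knows $\{a_1,\ldots,a_s\}$ is a $p$-basis of $L$.

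**The one thing requiring care** — and the only plausible obstacle — is justifying that $\{a_1,\ldots,a_s\}$ really is a $p$-basis of $L$, i.e. that $L^p(a_1,\ldots,a_s) = L$ and the $a_i$ are $p$-independent over $k$ in $L$. The first follows since $L^p \supseteq K^{p^2} $ and, more directly, $L = K^p(a_1,\ldots,a_s)$ with $K^p \subseteq L^p\cdot(\text{stuff})$—more cleanly: $L \subseteq K$ and the $a_i$ are $p$-independent in $K$, hence $p$-independent in the subfield $L$, while $[L:L^p] \le p^s$ because $L$ is generated over $K^p\supseteq L^p$ by $s$ elements each of $p$-th power lying in $K^p$; combining, $[L:L^p]=p^s$ and $\{a_1,\ldots,a_s\}$ is a $p$-basis. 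I would cite \cite[16.5]{E} or \cite[16.6]{MR2752311} for the standard dictionary between $p$-bases and differential bases, invoke the freeness of $\Omega^1$ on a $p$-basis, and then note that $\Omega^n$ of a field with a $p$-basis of size $s<n$ vanishes. I expect the whole argument to be two or three sentences in the final writeup.
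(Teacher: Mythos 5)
Your argument has a genuine gap: the claim that $\{a_1,\ldots,a_s\}$ is a $p$-basis of $L$, and hence that $\Omega^n_L=0$ for $n>s$, is false in general. The degree of imperfection is invariant under finite extensions, so $[L:L^p]=[K:K^p]=p^r$, not $p^s$; your estimate ``$[L:L^p]\le p^s$ because $L$ is generated over $K^p\supseteq L^p$ by $s$ elements'' forgets the factor $[K^p:L^p]=[K:L]=p^{r-s}$. Concretely, take $k$ perfect, $K=k(x,y)$ with $p$-basis $\{x,y\}$, $s=1$, $n=2$: then $L=K^p(x)=k(x,y^p)$, an absolute $p$-basis of $L$ is $\{x,y^p\}$ (note $L^p(x)=k(x,y^{p^2})\subsetneq L$), and $\Omega^2_L$ is one-dimensional, spanned by $dx\wedge d(y^p)\ne 0$. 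The set $\{a_1,\ldots,a_s\}$ is only a $p$-basis of $L$ \emph{relative to} $K^p$. What is true --- and is the actual content of the lemma --- is that the image of $\Omega^n_L$ in $\Omega^n_K$ vanishes: in the example, $d(y^p)\mapsto py^{p-1}dy=0$ only after extending scalars to $K$.

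The paper's proof supplies exactly the step you skipped. Writing $b\,dc_1\wedge\cdots\wedge dc_n\in\Omega^n_L$ with $b=\sum_e\beta_e^pa^e$ and $c_i=\sum_{e'}\gamma_{ie'}^pa^{e'}$ for $\beta_e,\gamma_{ie'}\in K$, one extends scalars to $K$, where the coefficients $\beta_e^p,\gamma_{ie'}^p$ become genuine $p$-th powers and so contribute nothing to the differentials; every term then reduces to a $K$-multiple of $\frac{da_{j_1}}{a_{j_1}}\wedge\cdots\wedge\frac{da_{j_n}}{a_{j_n}}$ with $j_1,\ldots,j_n\in\{1,\ldots,s\}$, and since $s<n$ two indices must repeat, killing each wedge. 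Equivalently and more structurally: the image of $\Omega^1_L\to\Omega^1_K$ lies in the $s$-dimensional $K$-span of $da_1,\ldots,da_s$, so the induced map on $n$-th exterior powers is zero for $n>s$. Either formulation repairs your proof; the purely combinatorial conclusion you drew is correct once the image is controlled, but it cannot be obtained by claiming $\Omega^n_L$ itself vanishes.
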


\begin{proof}
Let $bdc_{1}\wedge\cdots\wedge dc_{n}\in \Omega^{n}_L$ be a $n$-form and write 
\[b= \sum_{  e \in \Lambda_{s}} \beta_{e}^pa^{e} \hspace{.25in} \textrm{and} \hspace{.25in}
c_{i} = \sum_{  e' \in \Lambda_{s}} \gamma_{ie'}^pa^{ e'}\]
with $\beta_{e}, \gamma_{ie'} \in K$. Extend scalars from $L$ to $K$  and use the fact that $\beta_e^p$ and $\gamma_{ie'}^p$ are now $p$th powers to expand $bdc_{1}\wedge\cdots\wedge dc_{n}$ into a sum of elements of the form
\begin{equation}\delta\frac{da^{e_1}}{a^{e_1}}\wedge \cdots \wedge\frac{da^{e_{n}}}{a^{e_{n}}}\label{e101}\end{equation}
with $\delta \in K$ and $e_i \in \Lambda_{s}$. Since logarithmic differential forms are linear, for each $e_i= (e_{i1},\ldots, e_{is})$,
\[\frac{da^{e_i}}{a^{e_i}} = \sum_{j=1}^{s}e_{ij}\frac{da_j}{a_j}\]
hence the $n$-forms in (\ref{e101}) are sums of $n$-forms of the form
\[\delta \frac{da_{j_1}}{a_{j_1}}\wedge\cdots\wedge\frac{da_{j_{n}}}{a_{j_{n}}}\]
for some $\delta \in K$. These forms are all zero, since $j_1,\ldots,j_{n}$ are chosen among $1,\ldots,s$ and $s < n$.
Therefore, $\Omega^{n}_L \to \Omega^{n}_K$ is the zero map.
\end{proof}

\section{Izhboldin's Filtration}
\label{s3}
Let $F$ be a field of characteristic $p$. Recall (\cite{I}) that the $p$-cohomology of $F$ is defined as 
\begin{equation}\H^{n+1}_p(F) = \textrm{coker}\left(\Omega^n_F \stackrel{\wp}{\longrightarrow} \Omega^n_F/d(\Omega^{n-1}_F)\right)
\label{eqn87}
\end{equation}
where for $a\in F$, $b_i \in F^*$, $\wp$ satisfies $\wp(a\frac{db_1}{b_1}\wedge\cdots\wedge\frac{db_n}{b_n}) = (a^p-a)\frac{db_1}{b_1}\wedge\cdots\wedge\frac{db_n}{b_n}$. We follow the convention of denoting an element of $\H^{n+1}_p(F)$ by an $n$-form to reduce notation.

In \cite{I} Izhboldin gives a filtration on the $p$-cohomology of $F$ where $F$ is a characteristic $p$ field complete with respect to a discrete valuation and residue field $\overline F$. We will heavily rely on this filtration and so we review it here. Given an integer $m$, $U_m = U_m\H^{n+1}_p(F)$ is defined to be the subgroup of $\H^{n+1}_p(F)$ generated by elements of the form
\[f\frac{dg_1}{g_1}\wedge\cdots\wedge\frac{dg_n}{g_n} \qquad \textrm{ with }\qquad f \in F, g_i \in F^*, v(f)\geq -m.\]
By \cite[3.3]{I} $U_{-1} = 0$ and by \cite[2.6]{I} if $F^{\textrm{ur}}$ is the maximal unramified extension of $F$ then $U_0 = \H^{n+1}_\ur(F)$ where $\H^{n+1}_\ur(F) = \ker (\H^{n+1}_p(F) \to \H^{n+1}_p(F^{\textrm{ur}}))$. Quotients of the filtration are understood by the following theorem. 
\begin{theorem}[\cite{I} Theorem 2.5] 
\[ U_i/U_{i-1} \cong \left\{\begin{array}{ll}
\H^{n+1}_p(\overline F) \oplus \H^n_p(\overline F) & \textrm{ if } i=0\\
\Omega^n_{\overline F} & \textrm{ if } i>0,\,p\nmid i\\
\Omega^n_{\overline F}/\Omega^n_{\overline F,d=0} \oplus \Omega^{n-1}_{\overline F}/\Omega^{n-1}_{\overline F,d=0} & \textrm{ if } i>0,\,p\mid i
\end{array}\right.\]
\label{t3}
\end{theorem}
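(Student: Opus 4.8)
The plan is to compute the associated graded of $U_\bullet$ directly from the presentation $\H^{n+1}_p(F)=\mathrm{coker}\bigl(\Omega^n_F\xrightarrow{\ \wp\ }\Omega^n_F/d\Omega^{n-1}_F\bigr)$, in two steps: first put representatives of classes of $U_i$ into a normal form, giving a surjection of the claimed target onto $U_i/U_{i-1}$; then exhibit explicit residue-type retractions $U_i\to(\text{target})$ that vanish on $U_{i-1}$, on $d\Omega^{n-1}_F$, and on $\wp(\Omega^n_F)$, giving injectivity. Fix a uniformizer $\pi$ of $F$. Since $F$ is complete we may lift a $p$-basis $\{\bar b_\alpha\}$ of $\overline F$ to units $\{b_\alpha\}$ of $F$; then $\{\pi\}\cup\{b_\alpha\}$ is a $p$-basis of $F$, which gives a decomposition $\Omega^n_F\cong A^n\oplus\tfrac{d\pi}{\pi}\wedge A^{n-1}$, where $A^j$ is the $F$-span of the $j$-fold wedges of the $\tfrac{db_\alpha}{b_\alpha}$. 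Expanding each $\tfrac{dg_j}{g_j}$ in this $p$-basis and reducing coefficients $\pi$-adically shows that, modulo $U_{i-1}$, every class of $U_i$ is a sum of terms $\pi^{-i}\,u\,\omega'$ and $\pi^{-i}\,\tfrac{d\pi}{\pi}\wedge u''\,\omega''$ with $u,u''$ units and $\omega',\omega''$ wedges of the $\tfrac{db_\alpha}{b_\alpha}$.

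For $i=0$ we have $U_{-1}=0$ and $U_0=\H^{n+1}_\ur(F)$ by \cite[2.6]{I}; the map to $\H^{n+1}_p(\overline F)\oplus\H^n_p(\overline F)$ is the pair (specialization, residue), namely reduce the $A^n$-part modulo $\pi$ and read off the $\tfrac{d\pi}{\pi}$-coefficient of the other part, surjectivity being clear from the normal form and injectivity being the standard fact that an unramified class with trivial specialization and residue is trivial (successive approximation over the complete $F$). For $i>0$, the remaining dichotomy is governed by whether $d(\pi^{-i})=-i\,\pi^{-i-1}\,d\pi$ vanishes in characteristic $p$, i.e.\ by whether $p\mid i$. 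If $p\nmid i$, the exact form $d(\pi^{-i}\widetilde\eta)=\pi^{-i}\bigl(-i\,\tfrac{d\pi}{\pi}\wedge\widetilde\eta+d\widetilde\eta\bigr)$ shows that modulo $U_{i-1}$ and exact forms the $\tfrac{d\pi}{\pi}$-slot of the normal form is redundant, so $U_i/U_{i-1}$ is a quotient of $\Omega^n_{\overline F}$; since $\wp$ cannot contribute a relation at level $i$ (it raises a leading level $i'$ to $pi'\ne i$), the retractions show $U_i/U_{i-1}\cong\Omega^n_{\overline F}$. If $p\mid i$ this folding fails and both slots $A^n$ and $\tfrac{d\pi}{\pi}\wedge A^{n-1}$ survive; the relations left on each slot are exactly that $d(\pi^{-i}\widetilde\alpha)=\pi^{-i}\,d\widetilde\alpha$ kills exact residue forms while $\wp$ of a level-$i'$ term whose coefficient is a $p$-th power trades it, modulo $U_{i-1}$, for the level-$i$ form $\pi^{-i}\widetilde\alpha$, killing the image of the inverse Cartier operator; by the Cartier isomorphism these cut $\Omega^n_{\overline F}$ and $\Omega^{n-1}_{\overline F}$ down precisely to $\Omega^n_{\overline F}/\Omega^n_{\overline F,d=0}$ and $\Omega^{n-1}_{\overline F}/\Omega^{n-1}_{\overline F,d=0}$.

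The main obstacle is the injectivity (``no further relations'') half, because $d\Omega^{n-1}_F+\wp(\Omega^n_F)$ mixes all levels of the filtration and one must rule out that a combination of its elements acquires leading level exactly $i$ after cancellation at higher levels, imposing a relation not yet accounted for. This is what forces one to write the retractions $U_i\to(\text{target})$ down explicitly as residue-type homomorphisms and check by hand that they annihilate $U_{i-1}$, $d\Omega^{n-1}_F$ and $\wp(\Omega^n_F)$ and are independent of the choice of $\pi$ and of the lifts $b_\alpha$; their compatibility with $d$ and with $\wp$ is precisely where the $p\mid i$ versus $p\nmid i$ split reappears, and the completeness of $F$ is what makes the attendant successive-approximation estimates converge.
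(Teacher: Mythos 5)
This theorem is quoted verbatim from Izhboldin (\cite{I}, Theorem~2.5); the paper gives no proof of it, so your proposal can only be measured against the source and against the way the paper later uses the isomorphisms. Your sketch does identify the correct mechanisms, and they are exactly the ones the paper exploits downstream: the normal form obtained by lifting a $p$-basis of $\overline F$ to units and splitting $\Omega^n_F$ into a part without $d\pi/\pi$ and a part containing it; the folding identity $\pi^{-i}\,\frac{d\pi}{\pi}\wedge\widetilde\eta \equiv i^{-1}\pi^{-i}\,d\widetilde\eta$ for $p\nmid i$, which is (\ref{eq67}); and, for $p\mid i$, the decomposition of a closed form as $\Phi(\epsilon)+d(\xi)$ via Cartier's isomorphism, which is precisely how Proposition~\ref{p2} identifies exact forms and the image of the inverse Cartier operator as the relations at levels divisible by $p$. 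The $i=0$ description as the pair (specialization, residue) matches (\ref{e62})--(\ref{e63}). So the surjectivity direction of your argument is sound and well aligned with Izhboldin's construction of the maps $\rho_i$.

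The genuine gap is the injectivity half, and you have located it yourself without closing it. That $U_i/U_{i-1}$ is a \emph{quotient} of the stated group is the easy direction; the content of the theorem is that there are no further relations, i.e.\ that no element of $d\Omega^{n-1}_F+\wp(\Omega^n_F)$ whose leading level lies above $i$ can, after cancellation among higher levels, impose a nonzero relation in level exactly $i$, and that the reduction to $\overline F$ is independent of the choice of $\pi$ and of the lifts $b_\alpha$. You say one must ``write the retractions down explicitly and check by hand'' that they annihilate $U_{i-1}$, $d\Omega^{n-1}_F$ and $\wp(\Omega^n_F)$, but you never write them down; and for $i=0$ you appeal to ``the standard fact'' that an unramified class with trivial specialization and residue vanishes, which for $\H^{n+1}_p$ is itself a theorem of Kato of comparable depth rather than a routine successive approximation. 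As it stands the proposal reduces Izhboldin's theorem to its hardest step and stops there; completing it requires defining the maps $\rho_i^{-1}$ on representatives, verifying that they descend to $\H^{n+1}_p(F)$ (this is where completeness and the convergence of the $\pi$-adic expansions genuinely enter), and checking that they invert the surjections built from your normal form.
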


\noindent
The isomorphisms in \ref{t3} will be denoted by $\rho_i^{-1}$ as is done in \cite[2.4]{I}. $\rho_0^{-1}:U_0 \to \H^{n+1}_p(\overline F) \oplus \H^n_p(\overline F)$ defines two maps, $\partial_1$ and $\partial_2$, the so-called {\it first and second residues}. For $a, b, b_i, c_i \in R^\times$ and $\pi \in R$ a fixed uniformizer for $F$, $\partial_1$ and $\partial_2$ are given by 

\begin{eqnarray}
\partial_1\left(a\frac{db_1}{b_1}\wedge \cdots \wedge \frac{db_n}{b_n}+b\frac{dc_1}{c_1}\wedge \cdots \wedge \frac{dc_{n-1}}{c_{n-1}}\wedge\frac{d\pi}{\pi}\right)&=&\left(\bar a\frac{d\bar b_1}{\bar b_1}\wedge \cdots \wedge \frac{d\bar b_n}{\bar b_n},0\right)\label{e62}
\\
\partial_2\left(a\frac{db_1}{b_1}\wedge \cdots \wedge \frac{db_n}{b_n}+b\frac{dc_1}{c_1}\wedge \cdots \wedge \frac{dc_{n-1}}{c_{n-1}}\wedge\frac{d\pi}{\pi}\right) &=&\left(0,\bar b\frac{d\bar c_1}{\bar c_1}\wedge \cdots \wedge \frac{d\bar c_{n-1}}{\bar c_{n-1}}\right)\label{e63}
\end{eqnarray}

\begin{remark} In \cite{I} the position of $d\pi/\pi$ in the definition of the first and second residues is in the first slot instead of last slot as above. This will possibly change the sign of the residues, but will not affect the isomorphisms.
\end{remark}

The following lemma describes how the isomorphisms in Theorem \ref{t3} behave with respect to scalar extensions. Let $e,\,m, \,n$ be positive integers. Let $F_1$ be a field of characteristic $p$ which is complete with respect to a discrete valuation $v$ and let $F_2$ be a complete subfield on which the valuation is non-trivial with ramification index $e$. Within the filtration on $\H^{n+1}_p(F_2) \to \H^{n+1}_p(F_1)$ there is a well defined \textit{extension of scalars map} $U_m/U_{m-1}(F_2) \to U_{em}/U_{em-1}(F_1)$ (since $e(m-1)\leq em-1$) which behaves as follows.

\begin{lemma}
Let $e,\,m,\,n,\,F_1$ and $F_2$ be as above. Let $\pi \in F_1$ and $\tau \in F_2$ be uniformizers with $\tau = u\pi^e$ and $u$ a unit. To reduce notation in the commutative diagrams below we use $\omega_n$ to indicate both a $n$-form in $\Omega^n$ and the class of that $n$-form in a quotient.
 \begin{enumerate}
\item If $p\nmid em$ then there is a commutative diagram
\[\xymatrix{
U_{em}/U_{em-1}(F_1) \ar[r]^(.7){\rho_{em}^{-1}} & \Omega^n_{\overline F_1}\\
U_m/U_{m-1}(F_2) \ar[r]^(.7){\rho_{em}^{-1}}\ar[u]^{\mathrm{res}} & \Omega^n_{\overline{F}_2}\ar[u]_{\psi_m}\\
}
\]
in which 
\[\psi_m:\omega_n \mapsto \bar u^{-m}\omega_n.\]
\label{e66}

\item If $m>0$ and $p|m$ then there is a commutative diagram

\[\xymatrix{
U_{em}/U_{em-1}(F_1) \ar[r]^(.35){\rho_{em}^{-1}} & \Omega^n_{\overline{F}_1}/\Omega^n_{\overline{F}_1,d=0}\oplus \Omega^{n-1}_{\overline{F}_1}/\Omega^{n-1}_{\overline{F}_1,d=0}\\
U_m/U_{m-1}(F_2) \ar[r]^(.35){\rho_{em}^{-1}}\ar[u]^{\mathrm{res}} & \Omega^n_{\overline{F}_2}/\Omega^n_{\overline{F}_2,d=0} \oplus \Omega^{n-1}_{\overline{F}_2}/\Omega^{n-1}_{\overline{F}_2,d=0}\ar[u]_{\psi_m}
}
\]
in which \[\psi_m: (\omega_n,\omega_{n-1}) \mapsto (\overline u^{-m}\omega_n+\omega_{n-1}\wedge\frac{d\bar u}{\bar u},e\,\bar u^{-m}\omega_{n-1}).\]
\label{ee66}

\item If $m=0$ then there is a commutative diagram
\[\xymatrix{
\H^{n+1}_\ur(F_1) \ar[r]^(.4){\rho_0^{-1}} & \H^{n+1}_p(\overline F_1)\oplus \H^{n}_p(\overline F_1)\\
\H^{n+1}_\ur(F_2) \ar[r]^(.4){\rho_0^{-1}}\ar[u]^{\mathrm{res}} & \H^{n+1}_p(\overline F_2) \oplus\H^{n}_p(\overline F_2)\ar[u]_{\psi_0} 
}
\]
in which \[\psi_0:(\omega_n,\omega_{n-1}) \mapsto (\omega_n + \omega_{n-1}\wedge \frac{d\bar u}{\bar u},e\,\omega_{n-1}).\]
\label{e45}
\end{enumerate}
\label{l4}
\end{lemma}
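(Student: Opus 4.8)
The plan is to verify each of the three commutative squares by evaluating both composites on a generating set of the group in the upper‑left corner, using the explicit description of Izhboldin's isomorphisms $\rho_i$ on generators from \cite{I} together with one structural input: under $F_2\subset F_1$ a uniformizer $\tau$ of $F_2$ becomes $u\pi^e$, so that in $\Omega^\bullet_{F_1}$ one has $\tau^{-m}=u^{-m}\pi^{-em}$ and $\frac{d\tau}{\tau}=\frac{du}{u}+e\frac{d\pi}{\pi}$. Since $\mathrm{res}$ and the $\rho$'s are additive it suffices to treat the standard generators $f\,d\log g_1\wedge\cdots\wedge d\log g_n$ that appear in Izhboldin's normal forms.

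For part (1) a generator of $U_m/U_{m-1}(F_2)$ ($p\nmid em$) may be taken as $a\tau^{-m}\,d\log b_1\wedge\cdots\wedge d\log b_n$ with $a,b_j\in R_2^\times$ (any $d\log\tau$ is absorbed using $p\nmid m$ and exactness in $\H^{n+1}_p$), and $\rho_m^{-1}$ sends it to $\bar a\,d\log\bar b_1\wedge\cdots\wedge d\log\bar b_n$; extending scalars turns $a\tau^{-m}$ into $(au^{-m})\pi^{-em}$ with $au^{-m}$ a unit, so $\rho_{em}^{-1}$ of the image is $\bar u^{-m}\cdot\bar a\,d\log\bar b_1\wedge\cdots$, i.e.\ $\psi_m(\omega_n)=\bar u^{-m}\omega_n$. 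For part (3), $U_0(F_2)=\H^{n+1}_\ur(F_2)$ is generated by $a\,d\log b_1\wedge\cdots\wedge d\log b_n$ (residue $(\omega_n,0)$) and by $b\,d\log c_1\wedge\cdots\wedge d\log c_{n-1}\wedge d\log\tau$ (residue $(0,\omega_{n-1})$ by (\ref{e62})--(\ref{e63})). The first extends verbatim; for the second, $\frac{d\tau}{\tau}=\frac{du}{u}+e\frac{d\pi}{\pi}$ splits it into $b\,d\log c_1\wedge\cdots\wedge d\log c_{n-1}\wedge d\log u$, which is again of ``first‑residue type'' and contributes $\omega_{n-1}\wedge\frac{d\bar u}{\bar u}$, plus $e\,b\,d\log c_1\wedge\cdots\wedge d\log c_{n-1}\wedge d\log\pi$, which contributes $e\,\omega_{n-1}$. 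Collecting terms gives the stated $\psi_0$.

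Part (2) combines both phenomena and is where the real work lies. The first‑type generator $a\tau^{-m}\,d\log b_1\wedge\cdots\wedge d\log b_n$ ($p\mid m$) extends to $(au^{-m})\pi^{-em}\,d\log b_1\wedge\cdots$ and contributes $\bar u^{-m}\omega_n$ to the first coordinate; the second‑type generator $b\tau^{-m}\,d\log c_1\wedge\cdots\wedge d\log c_{n-1}\wedge d\log\tau$ acquires the factor $u^{-m}$ and, via $\frac{d\tau}{\tau}=\frac{du}{u}+e\frac{d\pi}{\pi}$, splits into a cross term $(bu^{-m})\pi^{-em}\,d\log c_1\wedge\cdots\wedge d\log c_{n-1}\wedge d\log u$ in the first coordinate and a main term $e(bu^{-m})\pi^{-em}\,d\log c_1\wedge\cdots\wedge d\log c_{n-1}\wedge d\log\pi$ in the second, giving $e\,\bar u^{-m}\omega_{n-1}$ there. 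One then has to identify the cross term with $\omega_{n-1}\wedge\frac{d\bar u}{\bar u}$ \emph{inside} $\Omega^n_{\overline F_1}/\Omega^n_{\overline F_1,d=0}$: here one uses that $p\mid m$ forces $\bar u^{-m}$ to be a $p$‑th power with $d(\bar u^{-m})=0$, and one invokes the relations defining $U_{em-1}$ (exactness of $d(\pi^{-em}\,(-))$, available precisely because $p\mid em$) together with the passage to the quotient by closed forms in order to normalize this $p$‑th‑power coefficient. In all three cases one also checks that the resulting $\psi_m$ is independent of the auxiliary choices of uniformizers $\pi,\tau$ and of the lifts used in $\rho_i$; this follows from the functoriality already present in \cite{I} and the standard behaviour of $\rho_i$ under a change of uniformizer.

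I expect part (2) to be the main obstacle: when $p\mid m$ the target groups $\Omega^n_{\overline F_1}/\Omega^n_{\overline F_1,d=0}$ and $\Omega^{n-1}_{\overline F_1}/\Omega^{n-1}_{\overline F_1,d=0}$ carry far fewer relations than the cohomology groups appearing when $m=0$, so the naive ``read off the coefficient'' computation produces an extra $p$‑th‑power factor $\bar u^{-m}$ on the cross term that has to be removed using the delicate interplay of $d$, $p$‑th powers, and the filtration. The asymmetric shape of $\psi_m$ in case (2) is precisely the outcome of this bookkeeping.
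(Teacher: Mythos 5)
Your strategy is the same as the paper's: the paper writes $\rho_0(\omega_n,\omega_{n-1})=\hat\omega_n+\hat\omega_{n-1}\wedge\frac{d\tau}{\tau}$, substitutes $\tau=u\pi^e$, reads off $\rho_0^{-1}$ over $F_1$, and declares parts (1) and (2) to be the same diagram chase; your parts (1) and (3) carry this out correctly. The problem is exactly where you predict it, in part (2) --- but the difficulty is not what you think. Your ``naive'' computation there is already complete: substituting $\tau^{-m}=u^{-m}\pi^{-em}$ and $\frac{d\tau}{\tau}=\frac{du}{u}+e\frac{d\pi}{\pi}$ into $\frac{\hat\omega_n}{\tau^m}+\frac{\hat\omega_{n-1}}{\tau^m}\wedge\frac{d\tau}{\tau}$ and applying $\rho_{em}^{-1}$ gives
\[
\left(\bar u^{-m}\omega_n+\bar u^{-m}\,\omega_{n-1}\wedge\frac{d\bar u}{\bar u}\,,\ e\,\bar u^{-m}\omega_{n-1}\right).
\]
The extra step you defer --- identifying $\bar u^{-m}\omega_{n-1}\wedge\frac{d\bar u}{\bar u}$ with $\omega_{n-1}\wedge\frac{d\bar u}{\bar u}$ inside $\Omega^n_{\overline F_1}/\Omega^n_{\overline F_1,d=0}$ --- cannot be carried out in general. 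Their difference $(\bar u^{-m}-1)\,\omega_{n-1}\wedge\frac{d\bar u}{\bar u}$ has differential $(\bar u^{-m}-1)\,d\omega_{n-1}\wedge\frac{d\bar u}{\bar u}$ (the term with $d(\bar u^{-m})$ dies because $p\mid m$), and this is nonzero whenever $d\omega_{n-1}\wedge\frac{d\bar u}{\bar u}\ne 0$; so the two classes really differ modulo closed forms. Neither the Artin--Schreier relation $\Phi(\epsilon)/\pi^{em}\equiv\epsilon/\pi^{em/p}$ nor exactness of $d(\pi^{-em}(\,\cdot\,))$ helps, since $\bar u^{-m}-1$ is not a $p$-th power and the form in question is not closed. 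In short, the ``normalization'' you invoke is an assertion of a false identity, not a bookkeeping step.

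The honest conclusion of your computation is that $\psi_m$ in part (\ref{ee66}) should carry the factor $\bar u^{-m}$ on the cross term as well; as displayed in the statement that factor is absent. This discrepancy is invisible in the one place part (\ref{ee66}) is used (Case 3 of the proof of Theorem \ref{t1}): there the vanishing of the second coordinate forces $d\omega_{n-1}=0$ first, after which $\omega_{n-1}\wedge\frac{d\bar u}{\bar u}$ is closed and both versions of the cross term vanish in the quotient, so the downstream argument is unaffected. To repair your write-up, either state and prove the formula with the extra $\bar u^{-m}$, or restrict part (2) to the case $d\omega_{n-1}=0$ where the two formulas coincide; do not attempt to remove the $p$-th-power coefficient.
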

\begin{proof} Each of these is a diagram chase using the definitions for the maps $\rho_m$ from \cite[2.5]{I}. We illustrate the case $m=0$ here: let $\omega_i \in \Omega^i_{\overline F_2}$. Then $\rho_0$ of the class of $(\omega_n,\omega_{n-1})$ in $\H^{n+1}_p(\overline F_2)\oplus \H^{n}_p(\overline F_2)$ is the class of $\hat \omega_n+\hat \omega_{n-1}\wedge \frac{d\tau}{\tau}$ where $\hat \omega_i$ is any lift of $\omega_i$ to $F_2$. Extend scalars to $F_1$:
\begin{eqnarray*}\hat \omega_n+\hat \omega_{n-1}\wedge \frac{d\tau}{\tau} &=&\hat \omega_n+\hat \omega_{n-1}\wedge \frac{d(u\pi^e)}{u\pi^e}\\
&=&\hat \omega_n+\hat \omega_{n-1}\wedge \frac{du}{u}+e\omega_{n-1}\wedge \frac{d\pi}{\pi}
\end{eqnarray*}
Over $F_1$, $\rho_0^{-1}(\hat \omega_n+\hat \omega_{n-1}\wedge \frac{du}{u}+e\omega_{n-1}\wedge \frac{d\pi}{\pi})$ equals the class of $(\omega_n+\omega_{n-1}\wedge \frac{d\bar u}{\bar u}, e\omega_{n-1})$ in $\H^{n+1}_p(\overline F_1)\oplus \H^n_p(\overline F_1)$.
\end{proof}
\begin{remark} There is a similar commutative diagram for the case $p\nmid m$ and $p|e$, but we will not have the occasion to use it.
\end{remark}

Let $K/F$ be an extension of fields. In general the restriction map $\Omega^n_{F} \to \Omega^n_K$ is not an injection. For a simple example consider $\Omega^1_{k(x^p)} \to \Omega^1_{k(x)}$ which sends $0\ne d(x^p)$ to $d(x^p) = px^{p-1}dx = 0$. Sometimes $\Omega^n_{F} \to \Omega^n_K$ is an injection. For example purely transcendental extension fields $K/F$ give injections $\Omega^m_F \to \Omega^m_{K}$ (\cite[7.2]{MR3391933}) and  separable algebraic extensions $K/F$ give injections $\Omega^m_F \to \Omega^m_K$ (\cite[7.1]{MR3391933}). We will run into a case in the proof of Theorem \ref{t22} which also gives an injection, namely

\begin{lemma} Let $k$ be a perfect field of characteristic $p$ and let $E/k$ be a finitely generated extension with $p$-basis $\{a_1,\ldots, a_{r}\}$. Assume $K/E$ is a field extension with $p$-basis $\{a_1,\ldots, a_r,\ldots,a_s\}$ over $k$. Then for $n\geq 0$ the natural restriction maps 
\begin{eqnarray*}
\Omega^n_E &\to& \Omega^n_K\\
\Omega^n_E/\Omega^n_{E,d=0}
&\to& \Omega^n_K/\Omega^n_{K,d=0} 
\end{eqnarray*}
are injections.
\label{l78}
\end{lemma}

\begin{proof} Since $E$ has $p$-basis $\{a_1,\ldots,a_r\}$ it has differential basis $\{da_1,\ldots,da_r\}$ over $E$ and $\Omega^n_E$ has basis $\{da_{i_1}\wedge\cdots \wedge da_{i_n}\}_{i_1<\cdots<i_n}$ with $1\leq i_j\leq r$.  $K$ has differential basis $\{da_1,\ldots,da_s\}$ over $K$ and $\Omega^n_K$ has $K$-basis $\{da_{i_1}\wedge\cdots da_{i_n}\}_{i_1<\cdots<i_n}$ with $1\leq i_j\leq s$. The extension of these differential bases gives us the injections $\Omega^n_E \to \Omega^n_K$. For the second map, the injection $\Omega^{n}_E \to \Omega^{n}_K$ tells us that if $d(\omega)=0$ in $\Omega^{j}_K$, then $d(\omega)=0$ in $\Omega^{n}_E$.
\end{proof}

Consider now the complete case $F \cong K((\pi))$ for a field $K$ of characteristic $p$ which has finite $p$-rank. We want to write elements of $\H^{n+1}_p(K((\pi)))$ in a unique way using Izhboldin's $U_i$ filtration. $\Omega^{n}_K$ is a finite dimensional $K$-vector space, hence also a finite dimensional $K^p$-vector space. Fix $n\geq 0$ and $\{\nu_i\}_{i \in I_n}$ a $K^p$-basis for $\Omega^n_K$.  The cycle subset $\Omega^{n}_{K,d=0}$ is not a $K$-vector subspace of $\Omega^{n}_K$, but it is a $K^p$-vector subspace, i.e., if $d\omega = 0$ then for any $x \in K$, $d(x^p\omega) = x^pd\omega = 0$. Therefore there exists a subset $I'_n\subset I_n$ so that the image of  $\nu_i$ for $i \in I'_n$ is a $K^p$-basis for the quotient space $\Omega^{n}_K/\Omega^{n}_{K,d=0}$. Similarly fix $\{\omega_i\}_{i \in I_{n-1}}$, a $K^p$-basis for $\Omega^{n-1}_K$ and $I'_{n-1}\subset I_{n-1}$ a subset so that the images of the $\{\omega_i\}$ with ${i \in I'_{n-1}}$ form a $K^p$-basis of $\Omega^{n-1}_K/\Omega^{n-1}_{K,d=0}$.
\begin{lemma} Let $f \in \H^{n+1}_p(K((\pi)))$ and fix $K^p$-bases $\{\nu_i\}_{i \in I_n}$ and $\{\omega_j\}_{j \in I_{n-1}}$ of $\Omega^n_K$ and $\Omega^{n-1}_K$ as above. There exist unique $\alpha_{ki}, \,\beta_{ki},\,\gamma_{kj} \in K$ so that $f = \sum_{k=0}^m h_k$ with $h_0 \in U_0$ and for $k>0$
\begin{eqnarray*}
p\nmid k &:& h_k = \sum_{i \in I_n} \frac{\alpha_{ki}^p}{\pi^k}\nu_i\\
p|k &:& h_k = \sum_{i \in I'_n}\frac{\beta_{ki}^p}{\pi^k}\nu_i + \sum_{j \in I'_{n-1}}\frac{\gamma_{kj}^p}{\pi^k}\omega_j\wedge\frac{d\pi}{\pi}  
\end{eqnarray*}
Moreover each  $h_k \in U_k(K((\pi)))$.
\label{l1}
\end{lemma}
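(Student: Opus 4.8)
The plan is to build the decomposition of $f$ one layer of Izhboldin's filtration at a time, reading off the coefficients from the isomorphisms $\rho_k$ of Theorem~\ref{t3}, and then to obtain uniqueness from the injectivity of those $\rho_k$ by a downward induction on the filtration index.

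\emph{Existence.} First observe that the filtration is exhaustive: every class in $\H^{n+1}_p(K((\pi)))$ is a finite sum of forms $f\,\frac{dg_1}{g_1}\wedge\cdots\wedge\frac{dg_n}{g_n}$ with $f\in K((\pi))$, so it lies in $U_m$ for $m$ large enough. Fix such an $m$ and recurse downward on the index. Given a class in $U_k$ with $k\geq 1$, let $\bar f\in U_k/U_{k-1}$ be its image and apply $\rho_k^{-1}$. If $p\nmid k$ the target is $\Omega^n_K=\bigoplus_{i\in I_n}K^p\nu_i$, so $\rho_k^{-1}(\bar f)=\sum_i\lambda_i\nu_i$ with unique $\lambda_i\in K^p$; since the Frobenius is bijective on $K$, write $\lambda_i=\alpha_{ki}^p$ with $\alpha_{ki}\in K$ and set $h_k:=\sum_{i\in I_n}\frac{\alpha_{ki}^p}{\pi^k}\nu_i$. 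If $p\mid k$ do the analogous thing in $\Omega^n_K/\Omega^n_{K,d=0}\oplus\Omega^{n-1}_K/\Omega^{n-1}_{K,d=0}$, obtain $\beta_{ki},\gamma_{kj}\in K$, and set $h_k:=\sum_{i\in I'_n}\frac{\beta_{ki}^p}{\pi^k}\nu_i+\sum_{j\in I'_{n-1}}\frac{\gamma_{kj}^p}{\pi^k}\omega_j\wedge\frac{d\pi}{\pi}$; on reaching $U_0$ simply set $h_0:=f$. Two facts close the recursion. First, $h_k\in U_k$ — which is also the final clause of the lemma: each $\alpha_{ki},\beta_{ki},\gamma_{kj}$ lies in $K\subset R$ and so is a unit or zero, hence after writing $\nu_i$ and $\omega_j$ in logarithmic form every monomial of $h_k$ has coefficient of value $\geq -k$, the defining condition for $U_k$. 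Second, the image of $h_k$ in $U_k/U_{k-1}$ is carried by $\rho_k^{-1}$ back to $\sum_i\alpha_{ki}^p\nu_i$ (resp.\ to $\bigl(\sum_i\beta_{ki}^p\nu_i,\sum_j\gamma_{kj}^p\omega_j\bigr)$ modulo closed forms); this is exactly the explicit description of $\rho_k$ in \cite[2.4--2.5]{I} — for $k=0$ it is formulas (\ref{e62})--(\ref{e63}) — unwound for a form already supported on lifts of a $K^p$-basis. Granting these, $f-h_k\in U_{k-1}$ and the recursion yields $f=\sum_{k=0}^m h_k$ of the stated shape.

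\emph{Uniqueness.} Because $x^p-y^p=(x-y)^p$ in characteristic $p$, the difference of two terms of the prescribed shape is again of that shape, so it suffices to show that $\sum_{k=0}^m h_k=0$, with each $h_k$ ($k\geq 1$) of the prescribed shape and $h_0\in U_0$, forces every coefficient and $h_0$ to vanish (pad with zero terms should two expressions have different top index). For $k<m$ we have $h_k\in U_k\subset U_{m-1}$, so reducing modulo $U_{m-1}$ leaves the image of $h_m$ equal to $0$ in $U_m/U_{m-1}$; applying $\rho_m^{-1}$ and the second fact above gives $\sum_i\alpha_{mi}^p\nu_i=0$ (resp.\ the corresponding pair $=0$). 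Since $\rho_m$ is an isomorphism and the $\nu_i$ (resp.\ their classes, together with those of the $\omega_j$) are $K^p$-linearly independent in the relevant spaces, every $\alpha_{mi}$ (resp.\ $\beta_{mi},\gamma_{mj}$) is $0$, so $h_m=0$. Then $\sum_{k=0}^{m-1}h_k=0$ and we repeat, peeling off one index at a time down to $h_0=0$.

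\emph{Main obstacle.} The one genuinely delicate point is the ``second fact'': matching the coordinate-free isomorphisms $\rho_k$ of Theorem~\ref{t3} with the explicit normal forms $\frac{\alpha^p}{\pi^k}\nu_i$ and $\frac{\gamma^p}{\pi^k}\omega_j\wedge\frac{d\pi}{\pi}$, including the bookkeeping of which summand lives in which quotient $\Omega^*_K/\Omega^*_{K,d=0}$ when $p\mid k$. Exhaustiveness of the filtration, the valuation estimate placing $h_k$ in $U_k$, and the two nested inductions are routine.
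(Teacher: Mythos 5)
Your proposal is correct and follows essentially the same route as the paper: downward induction on Izhboldin's filtration, reading off the coefficients at each level via the isomorphisms $\rho_k^{-1}$ of Theorem~\ref{t3} (using that the $\nu_i$, resp.\ the classes of the $\nu_i$ and $\omega_j$, form $K^p$-bases of the relevant targets and that every element of $K^p$ has a unique $p$-th root in $K$), subtracting the resulting $h_k\in U_k$, and obtaining uniqueness by peeling off the top nonzero index. The compatibility you flag as the main obstacle --- that $\rho_k^{-1}$ of the class of $h_k$ returns exactly the chosen coefficients --- is likewise taken from Izhboldin's explicit description of $\rho_k$ in the paper's proof, so nothing further is needed.
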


\begin{proof} If $f \in U_0$ then $\alpha_{ki}=\beta_{ki}=\gamma_{kj}=0$ gives  a solution. Let $\alpha_{ki}'$, $\beta_{ki}'$, $\gamma_{ki}'$ be another choice of coefficients and let $m$ be the maximum integer with one of $\alpha_{mi}'$, $\beta_{mi}'$ or $\gamma_{mi}'$ nonzero. If $m>0$ then by our choice of bases, $\rho_m^{-1}(f) \ne 0$ (Theorem \ref{t3}). This contradicts that $f \in U_0\subset U_{m-1}$.  

Assume $f \notin U_0$ and let $m$ be the minimum integer with $f \in U_m$. Consider the image of $f$ in $U_m/U_{m-1}$. Use the isomorphisms in Theorem \ref{t3} together with $\overline {K((\pi))} \cong K$ to find the unique coefficients $\alpha_{ki}, \,\beta_{ki},\,\gamma_{kj} \in K$ which satisfy $f-\sum_{i \in I_n} \frac{\alpha_{ki}^p}{\pi^k}\nu_i \in U_{m-1}$ if $p\nmid k$ and $f-\sum_{i \in I'_n}\frac{\beta_{ki}^p}{\pi^k}\nu_i + \sum_{j \in I'_{n-1}}\frac{\gamma_{kj}^p}{\pi^k}\omega_j\wedge \frac{d\pi}{\pi} \in U_{m-1}$ if $p|k$. Apply induction to the new element.
\end{proof}

In Theorem \ref{t22} we will be given classes in $\H^{n+1}(K((\pi)))$ which are not quite in the canonical form of Lemma \ref{l1}. We will need to put them in canonical form and   determine what happens to the $U_0$ term in the process. The answer is the $U_0$ terms stays the same and the proof will use the following equality in $\H^{n+1}_p(K((\pi)))$: for $N \in \Z$ with $p\nmid N$ and any $\omega \in \Omega^{n}_{K((\pi))}$ we have
\begin{eqnarray}
\frac{\omega}{\pi^N}\wedge\frac{d\pi}{\pi} &=& \frac{\omega}{\pi^N}\wedge\frac{d\pi}{\pi} + d\left(\frac{N^{-1}\omega}{\pi^N}\right)\nonumber \\
&=& \frac{N^{-1}d\omega}{\pi^N}.
\label{eq67}
\end{eqnarray}

\begin{proposition} Let $f \in \H^{n+1}_p(K((\pi)))$ be an element of the form $f = \sum_{r=0}^N f_r$ where $f_0 \in U_0$ and for $r>0$
\[f_r = \frac{g_{r}}{\pi^r} + \frac{g'_{r}}{\pi^r}\wedge\frac{d\pi}{\pi}\]
with $g_{r} \in \Omega^{n}_K$ and $g'_{r} \in \Omega^{n-1}_K$. Then, when we write $f$ in its canonical form $f = \sum_{k=0}^m h_k$ as in Lemma \ref{l1}, $h_0 = f_0$. In particular, if $f \in U_0(K((\pi)))$ then $f = f_0$.
\label{p2}
\end{proposition}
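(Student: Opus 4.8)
The plan is to peel off $f_0$ and prove the core claim that a class of the special form $g:=\sum_{r=1}^N f_r$, with each $f_r=g_r/\pi^r+(g'_r/\pi^r)\wedge d\pi/\pi$ where $g_r\in\Omega^n_K$ and $g'_r\in\Omega^{n-1}_K$, has a canonical decomposition (in the sense of Lemma~\ref{l1}) supported in degrees $\ge 1$, i.e.\ with vanishing $U_0$-component. Granting this, write $g=\sum_{k\ge1}H_k$ with the $H_k$ of the form prescribed in Lemma~\ref{l1}; then $f=f_0+\sum_{k\ge1}H_k$ is a decomposition of $f$ of that prescribed form whose $U_0$-component $f_0$ lies in $U_0$, so by the uniqueness in Lemma~\ref{l1} it \emph{is} the canonical decomposition of $f$, whence $h_0=f_0$. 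The last sentence of the proposition follows at once: if $f\in U_0$, then all the coefficients $\alpha_{ki},\beta_{ki},\gamma_{kj}$ in its canonical decomposition vanish (else $f\notin U_{m-1}$ for the top nonzero degree $m\ge1$, by Theorem~\ref{t3}), so $f=h_0=f_0$.

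I would prove the core claim by induction on $N$, the case $N=0$ being vacuous. Note that each $f_r$ lies in $U_r$ (write the coefficients of $g_r,g'_r$ in terms of a $p$-basis of $K$), so $g\in U_N$. In the inductive step I rewrite the leading summand $f_N$, as an element of $\H^{n+1}_p(K((\pi)))$, as a class of the canonical $U_N$-shape plus a finite sum of terms of the same shape as the $f_r$ but of pole order $<N$; absorbing those lower terms into $f_1,\dots,f_{N-1}$ yields a sum of the same kind with $N$ replaced by something $\le N-1$, to which the induction hypothesis applies, and the pieces are reassembled via uniqueness. If $p\nmid N$ this is immediate from (\ref{eq67}): it turns $(g'_N/\pi^N)\wedge d\pi/\pi$ into $N^{-1}dg'_N/\pi^N$, so $f_N=(g_N+N^{-1}dg'_N)/\pi^N$; expanding the $n$-form $g_N+N^{-1}dg'_N$ in the fixed $K^p$-basis $\{\nu_i\}_{i\in I_n}$ and using that every element of $K^p$ is a $p$-th power writes $f_N$ exactly as $\sum_{i\in I_n}(\alpha_{Ni}^p/\pi^N)\nu_i$, with no lower-order remainder at all --- consistent with $U_N/U_{N-1}\cong\Omega^n_{\overline F}$ recording all $n$-form coefficients when $p\nmid N$ (Theorem~\ref{t3}).

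The case $p\mid N$ is the main obstacle, since (\ref{eq67}) is unavailable and $U_N/U_{N-1}$ now records only the class of $g_N$ modulo closed forms (Theorem~\ref{t3}). I would first split $g_N=\sum_{i\in I'_n}\beta_i^p\nu_i+Z$ and $g'_N=\sum_{j\in I'_{n-1}}\gamma_j^p\omega_j+Z'$ using the direct-sum decompositions $\Omega^n_K=\langle\nu_i:i\in I'_n\rangle_{K^p}\oplus\Omega^n_{K,d=0}$ and $\Omega^{n-1}_K=\langle\omega_j:j\in I'_{n-1}\rangle_{K^p}\oplus\Omega^{n-1}_{K,d=0}$ (valid because the chosen $\nu_i$, $i\in I'_n$, descend to a $K^p$-basis of $\Omega^n_K/\Omega^n_{K,d=0}$, and likewise in degree $n-1$); then $Z$ and $Z'$ are closed. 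This peels off the canonical $U_N$-term $\sum_{i\in I'_n}(\beta_i^p/\pi^N)\nu_i+\sum_{j\in I'_{n-1}}(\gamma_j^p/\pi^N)\omega_j\wedge d\pi/\pi$ and leaves the ``closed tail'' $Z/\pi^N+(Z'/\pi^N)\wedge d\pi/\pi$ to dispose of. For that I would invoke the Cartier isomorphism: a closed $n$-form over $K$ is, modulo $d\Omega^{n-1}_K$, a $K$-linear combination of logarithmic $n$-forms with $p$-th power coefficients, say $Z=\sum_l u_l^p\lambda_l+d\eta$ with $\eta\in\Omega^{n-1}_K$ and $\lambda_l$ logarithmic (and similarly $Z'=\sum_l w_l^p\mu_l+d\eta'$ with $\eta'\in\Omega^{n-2}_K$ and $\mu_l$ logarithmic of degree $n-1$). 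Since $p\mid N$, the exact correction $d\eta/\pi^N=d(\eta/\pi^N)$ dies in $\H^{n+1}_p$, as does $(d\eta'/\pi^N)\wedge d\pi/\pi=d(\pi^{-N-1}\eta'\wedge d\pi)$; writing $N=pm$, the defining relation for $\wp$ gives $(u_l/\pi^m)^p\lambda_l=(u_l/\pi^m)\lambda_l$ and $(w_l/\pi^m)^p\mu_l\wedge d\pi/\pi=(w_l/\pi^m)\mu_l\wedge d\pi/\pi$ in $\H^{n+1}_p$, because $\lambda_l$ and $\mu_l\wedge d\pi/\pi$ are logarithmic $n$-forms. Hence the closed tail equals $(\sum_l u_l\lambda_l)/\pi^m+\big((\sum_l w_l\mu_l)/\pi^m\big)\wedge d\pi/\pi$, a term of exactly the $f_r$-shape with pole order $m=N/p<N$, which closes the induction. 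The crux is precisely this treatment of the closed tail when $p\mid N$: there one cannot integrate by parts via (\ref{eq67}) and must instead trade the closed form for a logarithmic one via Cartier and then lower its pole order using the $\wp$-relation.
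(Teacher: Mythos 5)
Your proposal is correct and follows essentially the same route as the paper: induction on $N$, the identity (\ref{eq67}) when $p\nmid N$, and, when $p\mid N$, splitting off the canonical $U_N$-piece and disposing of the closed remainder via Cartier (exact parts die since $p\mid N$; the logarithmic-with-$p$-th-power-coefficient parts drop to pole order $N/p$ via the $\wp$-relation, exactly the paper's $\Phi(\epsilon)/\pi^N=\epsilon/\pi^{N/p}$). The only difference is bookkeeping — you peel off $f_0$ first and reassemble via the uniqueness in Lemma~\ref{l1}, whereas the paper tracks directly that $f_0$ is never altered because $N/p\neq 0$ — which is immaterial.
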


\begin{proof} We proceed by induction on $N$. If $N=0$ then $f$ is already in canonical form and there is nothing to prove. Fix $N>0$ and assume the proposition is true for all $N_0<N$. Let $f=f_N+\cdots+f_0$ with the $f_r$'s as in the statement of the proposition. If $p\nmid N$, then by (\ref{eq67}):
\begin{eqnarray*}
f_N &=& \frac{g_N}{\pi^N} + \frac{g'_N}{\pi^N}\wedge\frac{d\pi}{\pi}\\
&=&\frac{g_N}{\pi^N} + \frac{N^{-1}dg'_N}{\pi^N}
\end{eqnarray*}
Write $g_N + N^{-1}dg'_N \in \Omega^n_{K}$ as $\sum_{i \in I_n} \alpha_{Ni}^p\nu_i$ with $\alpha_{Ni} \in K$. Then
\[f_N = \sum_{i \in I_n}\frac{\alpha_{Ni}^p}{\pi^N}\nu_i\]
is in canonical form. The result holds by induction on $f-f_N$.  If $p|N$ write
\begin{eqnarray*}
g_N &=& \sum_{i \in I'_n}\beta_{Ni}^p\nu_i + \mu_n\\
g'_N &=& \sum_{j \in I'_{n-1}}\gamma_{Nj}^p\omega_j + \mu_{n-1}
\end{eqnarray*}
with $\beta_{Ni},\,\gamma_{Nj} \in K$, $\mu_n \in \Omega^{n}_{K,d=0}$ and $\mu_{n-1} \in \Omega^{n-1}_{K,d=0}$.
By Cartier's isomorphism \cite[1.5.3]{I} $d(\mu_i) = 0$ implies  $\mu_i = \Phi(\epsilon_i)+d(\xi_i)$ for some $\epsilon_i \in \Omega^{i}_K$, $\xi_i \in \Omega^{i-1}_K$. Here $\Phi:\Omega^i_K \to \Omega^i_K$ is the Frobenius homomorphism 
\[\Phi\,\,:\,\,a\frac{db_1}{b_1}\wedge \cdots \wedge \frac{db_i}{b_i} \,\,\longrightarrow\,\, a^p\frac{db_1}{b_1}\wedge \cdots \wedge \frac{db_i}{b_i}.\]
All together we have 
\[f_N = \sum_{i \in I'_n}\frac{\beta_{Ni}^{p}}{\pi^N}\nu_i  +\frac{d(\xi_n)}{\pi^N} + \frac{\Phi(\epsilon_n)}{\pi^N}  + \sum_{j \in I'_{n-1}} \frac{\gamma_{Nj}^{p}\omega_j}{\pi^N}\wedge\frac{d\pi}{\pi} + \frac{d(\xi_{n-1})}{\pi^N}\wedge\frac{d\pi}{\pi}+ \frac{\Phi(\epsilon_{n-1})}{\pi^N}\wedge\frac{d\pi}{\pi} \]
We need to re-write the 2nd, 3rd, 5th and 6th terms in this sum while the 1st and 4th terms are already in canonical form. We deal with the 2nd and 5th terms similarly; since $p|N$, $d(\xi_n)/\pi^N = d(\xi_n/\pi^N)$ and $\frac{d(\xi_{n-1})}{\pi^N}\wedge\frac{d\pi}{\pi} = d(\xi_{n-1}/\pi^N)\wedge\frac{d\pi}{\pi} = d\left(\frac{\xi_{n-1}}{\pi^N}\wedge\frac{d\pi}{\pi}\right)$. Since $d(-)=0$ in $\H^2_p(K((\pi)))$ we can replace both of these terms by 0. The 3rd and 6th terms are also similar; by (\ref{eqn87})  we have $\Phi(\epsilon_n)/\pi^N = \epsilon_n/\pi^{N/p}$ and 
\[\frac{\Phi(\epsilon_{n-1})}{\pi^N}\wedge\frac{d\pi}{\pi} = \frac{\epsilon_{n-1}}{\pi^{N/p}}\wedge\frac{d\pi}{\pi}\]
in $\H^2_p(K((\pi)))$. The 6 terms in $f_N$ have turned into:
\begin{eqnarray*}
f_N &=& \sum_{i \in I'_n}\frac{\beta_{Ni}^{p}}{\pi^N}\nu_i  +0+ \frac{\epsilon_n}{\pi^{N/p}}  +\sum_{j \in I'_{n-1}} \frac{\gamma_{Nj}^{p}\omega_j}{\pi^N}\wedge\frac{d\pi}{\pi} + 0+\frac{\epsilon_{n-1}}{\pi^{N/p}}\wedge\frac{d\pi}{\pi}\\
&=& h_N + \frac{\epsilon_n}{\pi^{N/p}} + \frac{\epsilon_{n-1}}{\pi^{N/p}}\wedge\frac{d\pi}{\pi}
\end{eqnarray*}
with $h_N$ in canonical form. Moreover,  
\[f = h_N +f_{N-1} +\ldots+f_{N/p}+\frac{\epsilon_n}{\pi^{N/p}} + \frac{\epsilon_{n-1}}{\pi^{N/p}}\wedge\frac{d\pi}{\pi}+\ldots + f_0\]
where each of $f_i$, $i\ne N/p$ and $f_{N/p} + \frac{\beta_1}{\pi^{N/p}} + \frac{\beta_0}{\pi^{N/p}}\wedge\frac{d\pi}{\pi}$ are as in the statement of the proposition. Note in particular, $N/p \ne 0$, so that we did not alter $f_0$. Apply the induction hypothesis to $f-h_N$ to finish the proof. The last sentence follows because $f\in U_0(K((\pi)))$ is already in canonical form.
\end{proof}

Let $K$ be a discrete valued field of characteristic $p$ with uniformizer $\pi$ and residue field $\overline K$. In Theorem \ref{t22} we will be looking at $n$-forms coming from subfields of the form $L=K^p(a_1,\ldots, a_s)\subset K = K^p(a_1,\ldots, a_r)$ where $s\leq r$ and $\{a_i\}_{i=1}^r$ is a differential basis for $K/k$ coming from $\overline K$. Let $a_{i_0} = \pi$ be the uniformizer for $K$ in this differential basis so that the completion $\widehat K \cong K_1((\pi))$ and the coefficient field $K_1$ contains all $a_i$ with $i \ne i_0$ i.e., all those with $v(a_i)=0$ (\cite[7.8]{E}).

\begin{lemma} Let $K$, $\{a_i\}_{i=1}^r$, $L$, $K_1$ and $\pi$ be as above. Let $g \in \H^{n+1}_p(L)$. Then upon extension of scalars to $\widehat K$
\[g_{\widehat K}  = \res{\widehat K}(g)= g_m+\cdots+g_0\]
where each $g_i\in \H^{n+1}_p(\widehat K)$ is a sum of elements of the form $\frac{f}{\pi^i}\frac{da^{e_1}}{a^{e_1}}\wedge \cdots \wedge\frac{da^{e_{n}}}{a^{e_{n}}}$ with $f \in K_1$, $e_i \in \Lambda_{s}$. Moreover, 
\begin{enumerate}
\item if $i_0>s$ then $\partial_2(g_0) = 0$
\item for any discrete valuation $w$ on $K_1$ with uniformizer $\tau$ and residue field $\overline{K}_1$, there exists a differential basis $B' = \{a_1',\ldots,a_{r-1}'\}$ for $K_1/k$  coming from $\overline{K}_1$ so that  
\begin{enumerate}
\item if $i_0>s$ then $K_1^p(a_1,\ldots,a_{s}) = K_1^p(a_1',\ldots,a_{s}')$ and $\partial_1(g_0)$ descends to\\ $K_1^p(a_1',\ldots, a_{s}')$,
\item if $i_0=s$ then $K_1^p(a_1,\ldots,a_{s-1}) = K_1^p(a_1',\ldots,a_{s-1}')$ and $\partial_1(g_0)$ descends to $K_1^p(a_1',\ldots, a_{s-1}')$.\end{enumerate}
\end{enumerate}
\label{lh2}
\end{lemma}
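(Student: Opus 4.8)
\textbf{Proof plan for Lemma \ref{lh2}.}

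The plan is to follow the extension-of-scalars machinery already set up in Section \ref{s2}–\ref{s3}, applied to the inclusion $L \subset \widehat K$. First I would use the differential basis $\{a_i\}_{i=1}^r$ of $K/k$ coming from $\overline K$, and the identification $\widehat K \cong K_1((\pi))$ with $\pi = a_{i_0}$ and $K_1$ a coefficient field containing all $a_i$ with $i\ne i_0$ (by \cite[7.8]{E}). Writing a representative $n$-form for $g \in \H^{n+1}_p(L)$ in terms of the $p$-basis $\{a_1,\ldots,a_s\}$ of $L/k$ exactly as in the proof of Lemma \ref{l999}: each coefficient and each $dc_j$ expands, over $\widehat K$, into a $K_1((\pi))$-linear combination of logarithmic forms $\frac{da^{e_1}}{a^{e_1}}\wedge\cdots\wedge\frac{da^{e_n}}{a^{e_n}}$ with $e_i \in \Lambda_s$. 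Collecting terms by $\pi$-valuation of the $K_1((\pi))$-coefficient and using the substitution $\Phi(\epsilon)/\pi^{N}=\epsilon/\pi^{N/p}$ from (\ref{eqn87}) to clear $p$-th powers gives the decomposition $g_{\widehat K}=g_m+\cdots+g_0$ with $g_i$ of the stated shape; the terms of negative valuation get grouped into finitely many $g_i$, $i>0$, and the valuation-$\geq 0$ part is $g_0\in U_0$.

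For part (1), the point is that if $i_0>s$ then none of $a_1,\ldots,a_s$ equals $\pi$, so every logarithmic form $\frac{da^{e_1}}{a^{e_1}}\wedge\cdots\wedge\frac{da^{e_n}}{a^{e_n}}$ appearing in $g_0$ involves only $da_i/a_i$ with $i\le s<i_0$, i.e. none of them contains a $d\pi/\pi$ factor. Comparing with the formula (\ref{e63}) for $\partial_2$, which extracts precisely the coefficient of the $d\pi/\pi$ slot, we get $\partial_2(g_0)=0$. (One should note $g_0$ is genuinely in $U_0$ here, so $\partial_1,\partial_2$ are defined on it.)

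For part (2), given a discrete valuation $w$ on $K_1$ with uniformizer $\tau$, apply Proposition \ref{p1} to the geometric rank-$1$ valuation $w$ on $K_1$ and the subfield $L_1 := K_1^p(a_1,\ldots,a_s)$ (resp. $K_1^p(a_1,\ldots,a_{s-1})$ when $i_0=s$, in which case $a_{i_0}=\pi$ is dropped from the generating set of $L$): this produces a differential basis $B'=\{a_1',\ldots,a_{r-1}'\}$ of $K_1/k$ coming from $\overline{K}_1$ with $L_1 \subseteq K_1^p(a_1',\ldots,a_t')$, $t\le \trdeg$-bound, so after harmlessly enlarging we may arrange $K_1^p(a_1,\ldots,a_s)=K_1^p(a_1',\ldots,a_s')$ (resp. with $s-1$). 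Then $\partial_1(g_0)$, read off via (\ref{e62}) as the residue-field class of the valuation-zero part of $g_0$, is built out of $\bar a_i$ for $i\le s$ (resp. $i\le s-1$) together with a unit coefficient, hence lies in the image of $\H^{n+1}_p$ of the subfield $K_1^p(a_1',\ldots,a_s')$ (resp. $K_1^p(a_1',\ldots,a_{s-1}')$), which is what "descends" means. The main obstacle I anticipate is the bookkeeping in part (2): one must verify that Proposition \ref{p1} can indeed be applied with the subfield equal to the specified $L_1$ and that the resulting basis really can be chosen to make the two generating sets $K_1^p(a_1,\ldots,a_s)$ and $K_1^p(a_1',\ldots,a_s')$ literally coincide (not merely one contained in the other), using the minimality of the chosen system of generators in the proof of Proposition \ref{p1} and Remark \ref{r3} to control where the residue-field collapse $\overline F_{i_0}=\overline F_{i_0-1}$ occurs relative to the index $s$.
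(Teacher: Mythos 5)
Your overall strategy coincides with the paper's: expand a representative of $g$ over the $K^p$-basis $\{a^e\}_{e\in\Lambda_s}$ of $L$, push the result into $\widehat K\cong K_1((\pi))$, collect terms by powers of $\pi$ (discarding the part of negative degree in the coefficients, which dies in $U_{-1}=0$), read off $\partial_2(g_0)=0$ from the absence of $d\pi/\pi$ when $i_0>s$, and rerun the construction from the proof of Proposition \ref{p1} inside $K_1$ with the valuation $w$ to produce $B'$ with $K_1^p(a_1',\ldots,a_s')=K_1^p(a_1,\ldots,a_s)$. Your point that the equality (not merely containment) of generated subfields follows from the $p$-independence of $a_1,\ldots,a_s$ in $K_1$ and the step-by-step construction in Proposition \ref{p1} is exactly how the paper handles it. (The appeal to the Frobenius substitution $\Phi(\epsilon)/\pi^N=\epsilon/\pi^{N/p}$ is not needed here; that device belongs to Proposition \ref{p2}.)

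There is, however, a gap in your justification of the descent claims in part (2). You assert that $\partial_1(g_0)$ descends because the logarithmic slots involve only $a_i$ with $i\le s$ ``together with a unit coefficient.'' A unit coefficient in $K_1$ is not enough: $K_1^p(a_1,\ldots,a_s)$ is a proper subfield of $K_1$, and a form $f\,\frac{da_{j_1}}{a_{j_1}}\wedge\cdots\wedge\frac{da_{j_n}}{a_{j_n}}$ with arbitrary $f\in K_1^\times$ need not be defined over it. What makes the descent work is that, after writing $b=\sum\beta_e^pa^e$, $c_i=\sum\gamma_{ie'}^pa^{e'}$ and $\pi$-adically expanding the resulting $K$-coefficients, every coefficient contributing to $g_0$ has the special shape $f^p a^e$ with $f\in K_1$ and $e\in\Lambda_s$, i.e.\ a $p$-th power times a monomial in $a_1,\ldots,a_s$, and such elements do lie in $K_1^p(a_1,\ldots,a_s)$. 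Relatedly, in the case $i_0=s$ one must check that a monomial $a^e=a_1^{\epsilon_1}\cdots a_s^{\epsilon_s}$ (with $a_s=\pi$) occurring in a degree-zero term has $\epsilon_s=0$: the $\pi$-valuation of the coefficient is $\epsilon_s-p(k-i)$, so its vanishing forces $p\mid\epsilon_s$, hence $\epsilon_s=0$ since $0\le\epsilon_s<p$; only then, after separating the $\epsilon_{j,s}\frac{d\pi}{\pi}$ contributions out of each $\frac{da^{e_j}}{a^{e_j}}$, is $\partial_1(g_0)$ genuinely an $n$-form over $K_1^p(a_1,\ldots,a_{s-1})$. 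These are precisely the points where the paper's proof is ``more careful'' than Lemma \ref{l999}; without tracking them the descent statements in (2) do not follow from your expansion.
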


\begin{remark} In the statement of Lemma \ref{lh2} we are identifying the coefficient field $K_1$ (and hence also $K_1^p(a_1,\ldots, a_s)$) with the residue field of $\widehat K$. In this way the statements ``$\partial_1(g_0)$ descends to $K_1^p(a_1',\ldots)$'' in Lemma \ref{lh2} make sense.
\end{remark}

\begin{proof}As in Lemma \ref{l999} we consider the extension of scalars map: $\Omega^{n}_L \to \Omega^{n}_K$. Since $s$ is not necessarily less than $n$, the map may be nonzero, but we can still express the $n$-forms using the $K^p$-basis of $L$. In particular, given $bdc_{1}\wedge\cdots\wedge dc_{n}\in \Omega^{n}_L$ write 
\[b= \sum_{  e \in \Lambda_{s}} \beta_{e}^pa^{e} \hspace{.25in} \textrm{and} \hspace{.25in}
c_{i} = \sum_{  e' \in \Lambda_{s}} \gamma_{ie'}^pa^{ e'}\]
with $\beta_{e}, \gamma_{ie'} \in K$. As in Lemma \ref{l999} we use these expressions for $b$ and $c_{i}$ and extend scalars from $L$ to $K$ to expand  $bdc_{1}\wedge\cdots\wedge dc_{d-1}$, but this time we are a bit more careful and expand it into a sum of elements of the form
\begin{equation}\frac{\delta^pa^e}{\pi^{pk}}\frac{da^{e_1}}{a^{e_1}}\wedge \cdots \wedge\frac{da^{e_{n}}}{a^{e_{n}}}\label{e99}\end{equation}
with $\delta \in R^\times$, $k \in \Z$ and $e,e_i \in \Lambda_{s}$. Now extend scalars further to $g_{\widehat K}$ and note that if $k<0$, then $\frac{\delta^pa^e}{\pi^{pk}}\frac{da^{e_1}}{a^{e_1}}\wedge \cdots \wedge\frac{da^{e_{d-1}}}{a^{e_{d-1}}}\in U_{-1}(\widehat K)\subset \H^2_p(\widehat K)$ which is zero by (\cite[3.3]{I}).

We can thus simplify elements of the form (\ref{e99}) over $\widehat K$ by expressing $\delta = f_0+f_1\pi+\cdots + f_{k}\pi^k+f'\pi^{k+1}$ with $f_i \in K_1$ and $f' \in R^\times$, so that  
\begin{eqnarray}
\frac{\delta^pa^e}{\pi^{pk}}\frac{da^{e_1}}{a^{e_1}}\wedge \cdots \wedge\frac{da^{e_{n}}}{a^{e_{n}}}&=& \sum_{i=0}^k\frac{f_i^pa^e}{\pi^{p(k-i)}}\frac{da^{e_1}}{a^{e_1}}\wedge \cdots \wedge\frac{da^{e_{n}}}{a^{e_{n}}}+\,\,(f')^p\pi^p a_e\frac{da^{e_1}}{a^{e_1}}\wedge \cdots \wedge\frac{da^{e_{n}}}{a^{e_{n}}}\nonumber\\
&=&\sum_{i=0}^k\frac{f_i^pa^e}{\pi^{p(k-i)}}\frac{da^{e_1}}{a^{e_1}}\wedge \cdots \wedge\frac{da^{e_{n}}}{a^{e_{n}}}
\label{eq86}
\end{eqnarray}

If $i_0>s$ then $a^e \in K_1$ for all $e \in \Lambda_s$. If $i_0\leq s$ then we will assume $i_0=s$ (after reordering if necessary) and if $e=(\epsilon_1,\ldots,\epsilon_s)$ then $a^e\pi^{-\epsilon_s} \in K_1$. In both cases, since $f_i\in K_1$, we've shown the class of $g_{\widehat K}$ can be written as
\begin{equation}g_{\widehat K} = g_m+\cdots +g_{0}\label{e100}\end{equation}
where each $g_i$ is a sum of elements of the form $\frac{f}{\pi^i}\frac{da^{e_1}}{a^{e_1}}\wedge \cdots \wedge\frac{da^{e_{n}}}{a^{e_{n}}}$ with $f \in K_1$ and $e_i \in \Lambda_s$. 

To show the final part of the lemma, return to (\ref{eq86}) and consider those terms contributing to the $g_0$ component of $g_{\widehat K}$. If $i_0> s$ then a term of the form (\ref{eq86}) contributes to $g_0$ only if $p(k-i) = 0$ and it is then immediate that both $\partial_2(g_0)=0$ (there are no uniformizers in the wedge product) and $g_0$ descends to $K_1^p(a_1,\ldots, a_s)$.  

If $i_0= s$ and the term
\[\frac{f_i^p}{\pi^{p(k-i)}}a^e\frac{da^{e_1}}{a^{e_1}}\wedge \cdots \wedge\frac{da^{e_{n}}}{a^{e_{n}}}\]
with $e=(\epsilon_1,\ldots,\epsilon_s)$ contributes to the $g_0$ piece then $\epsilon_{s}-p(k-i) = 0$. In particular, $p|\epsilon_s$ and thus $\epsilon_s=0$. The contributing element must therefore look like
\[f_i^pa^e\frac{da^{e_1}}{a^{e_1}}\wedge \cdots \wedge\frac{da^{e_{n}}}{a^{e_{n}}} \]
with $\epsilon_s=0$. Set $e_i = (\epsilon_{i1},\ldots,\epsilon_{is})$ and separate out the uniformizers in the wedge product:
\begin{eqnarray*}
f_i^pa^e\frac{da^{e_1}}{a^{e_1}}\wedge \cdots \wedge\frac{da^{e_{n}}}{a^{e_{n}}}&=& f_i^pa^e\left(\frac{d(a^{e_1}\pi^{-\epsilon_{1s}})}{a^{e_1}\pi^{-\epsilon_{1s}}} + \epsilon_{1s}\frac{d\pi}{\pi}\right)\wedge \cdots \wedge \left(\frac{d(a^{e_{n}}\pi^{-\epsilon_{ns}})}{a^{e_{n}}\pi^{-\epsilon_{ns}}} + \epsilon_{ns}\frac{d\pi}{\pi}\right)\\
&=&{\omega_i} + {\nu_i}\wedge\frac{d\pi}{\pi}.
\end{eqnarray*}
with $\omega_i \in \Omega^{n}$ and $\nu_i \in \Omega^{n-1}$  forms over $K_1^p(a_1,\ldots,a_{s-1})$. In particular, $g_0 = \omega+\nu\wedge\frac{d\pi}{\pi}$ with $\omega$ an $n$-form defined over $K_1^p(a_1,\ldots,a_{s-1})\subset K_1$. By construction we can identify $K_1$ with the residue field of $\widehat K$ and thus also $K_1^p(a_1,\ldots, a_{s-1})$ as a subfield of $\widehat K$. In particular, since $\partial_1(g_0)=\omega$, $\partial_1(g_0)$ descends to $K_1^p(a_1,\ldots,a_{s-1})$. 

Finally, let $w$ be a discrete valuation on $K_1$ with uniformizer $\tau$ and residue field $\overline K_1$. Arguing as in the proof of Proposition \ref{p1}, if $i_0>s$ (resp. $i_0 = s$) then there exists a differential basis $B' = \{a_1',\ldots,a_{r-1}'\}$ for $K_1/k$ coming from $\overline K_1$ so that $K_1^p(a_1,\ldots,a_{s}) = K_1^p(a_1',\ldots, a_{s}')$ (resp. $K_1^p(a_1,\ldots,a_{s-1}) = K_1^p(a_1',\ldots,a_{s-1}')$). Since we've already shown $\partial_1(g_0)$ to descend appropriately, this finishes the proof.
\end{proof}

\section{Essential dimension of the generic symbol}
\label{s4}
We now look at generic symbols in characteristic $p$. Fix integers $\ell, n\geq 1$ and $k$  an algebraically closed field of characteristic $p$. Set 
\[k_{\ell,n} = k(x_i,y_{i,j})_{1\leq i \leq \ell,1\leq j \leq n}\]
the rational function field defined by $\ell(n+1)$ independent variables over $k$.  Denote by $\gen_k(n+1,\ell,p)$ the $\H^{n+1}_p(k_{\ell,n})$ class of the length $\ell$ generic $p$-symbol of degree $n+1$ over $k$, i.e., 
\[\gen_k(n+1,\ell,p) = \sum_{i=1}^\ell x_i\frac{dy_{i,1}}{y_{i,1}}\wedge\cdots\wedge \frac{dy_{i,n}}{y_{i,n}} \in \H^{n+1}_p(k_{\ell,n}).\]
Throughout this section let $v_{{\ell,n}}$ denote the $y_{\ell,n}$-adic valuation on $k_{\ell,n}$, $\widehat k_{\ell,n}$ the completion and $\overline k_{\ell,n}$ the corresponding residue field. Note that $\gen_k(n+1,\ell,p) \in U_0(\widehat k_{\ell,n})$ and therefore we can look at its first and second residues. 
\begin{lemma}
Fix an isomorphism $\overline k_{\ell,n} \cong k_{\ell-1,n}(x_\ell,y_{\ell,1},\ldots,y_{\ell,n-1})$ and inclusions $k_{i,j}\subset \overline k_{\ell,n}$ for all $i\leq \ell$ and $j < n$ or $i<\ell$ and $j\leq n$.  Over $\widehat k_{\ell,n}$ and with respect to the uniformizer $y_{\ell,n}$  
\begin{eqnarray*}
\partial_1(\gen_k(n+1,\ell,p)) &=& \res{\overline k_{\ell,n}}(\gen_k(n+1,\ell-1,p))\\
\partial_2(\gen_k(n+1,\ell,p)) &=& \res{\overline k_{\ell,n}}\left(x_\ell\frac{dy_{\ell,1}}{y_{\ell,1}}\wedge \cdots \wedge \frac{dy_{\ell,n-1}}{y_{\ell,n-1}}\right).\\
\end{eqnarray*}
\label{l2}
\end{lemma}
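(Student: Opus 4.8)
The plan is to apply the definition of the first and second residues from equations (\ref{e62}) and (\ref{e63}) directly to the generic symbol, after separating off the one term (the $i=\ell$ term) that actually involves the uniformizer $y_{\ell,n}$. Writing
\[
\gen_k(n+1,\ell,p) = \left(\sum_{i=1}^{\ell-1} x_i\frac{dy_{i,1}}{y_{i,1}}\wedge\cdots\wedge\frac{dy_{i,n}}{y_{i,n}}\right) + x_\ell\frac{dy_{\ell,1}}{y_{\ell,1}}\wedge\cdots\wedge\frac{dy_{\ell,n}}{y_{\ell,n}},
\]
I would observe that in the first sum all of $x_i, y_{i,j}$ (for $i\leq\ell-1$) are units in the valuation ring $R$ of $v_{\ell,n}$, so this sum is exactly of the ``$a\,db_1/b_1\wedge\cdots\wedge db_n/b_n$'' shape appearing in (\ref{e62})--(\ref{e63}); and the last term is of the ``$b\,dc_1/c_1\wedge\cdots\wedge dc_{n-1}/c_{n-1}\wedge d\pi/\pi$'' shape with $b=x_\ell$, $c_j = y_{\ell,j}$ and $\pi = y_{\ell,n}$. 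This already exhibits $\gen_k(n+1,\ell,p)$ as lying in $U_0(\widehat k_{\ell,n})$ (as asserted in the text) and puts it precisely in the normal form to which the formulas for $\partial_1,\partial_2$ apply.

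With this decomposition in hand, the computation is essentially a matter of reading off (\ref{e62}) and (\ref{e63}). For $\partial_1$: the first sum contributes $\sum_{i=1}^{\ell-1}\bar x_i\,d\bar y_{i,1}/\bar y_{i,1}\wedge\cdots\wedge d\bar y_{i,n}/\bar y_{i,n}$, and the last term (which carries a $d\pi/\pi$) contributes $0$ to $\partial_1$; under the fixed identification $\overline k_{\ell,n}\cong k_{\ell-1,n}(x_\ell,y_{\ell,1},\ldots,y_{\ell,n-1})$ the residues $\bar x_i,\bar y_{i,j}$ are just $x_i,y_{i,j}$, so the total is $\res{\overline k_{\ell,n}}(\gen_k(n+1,\ell-1,p))$. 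For $\partial_2$: the first sum contributes $0$ (no $d\pi/\pi$ appears) and the last term contributes $\bar x_\ell\,d\bar y_{\ell,1}/\bar y_{\ell,1}\wedge\cdots\wedge d\bar y_{\ell,n-1}/\bar y_{\ell,n-1}$, which under the identification is $\res{\overline k_{\ell,n}}(x_\ell\,dy_{\ell,1}/y_{\ell,1}\wedge\cdots\wedge dy_{\ell,n-1}/y_{\ell,n-1})$, as claimed.

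I expect no serious obstacle here; the only points requiring a line of care are (i) checking that $\partial_1,\partial_2$ are genuinely well-defined on the class, i.e.\ that the ambiguity in representing an element of $\H^{n+1}_p$ by an $n$-form (adding exact forms and images of $\wp$) does not affect the output — but this is exactly the content of Theorem~\ref{t3} and the fact that $\rho_0^{-1}$ is a well-defined isomorphism on $U_0$, so it may simply be cited; and (ii) being explicit that the chosen coefficient field of $\widehat k_{\ell,n}$ is $\overline k_{\ell,n}$ itself (the subfield generated over $k$ by all the units $x_i, y_{i,j}$ other than $y_{\ell,n}$), so that ``$\bar a = a$'' for each such generator and the residues land where stated. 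If one wants to be scrupulous about signs, the remark following (\ref{e63}) already notes that moving $d\pi/\pi$ between slots only affects a sign and not the isomorphism, so the stated formulas hold up to the conventions fixed there. Thus the proof is a short direct verification via (\ref{e62}) and (\ref{e63}).
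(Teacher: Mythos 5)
Your proposal is correct and is exactly the paper's argument: the paper's proof is the one-line observation that the lemma follows from the descriptions of the residues in (\ref{e62}) and (\ref{e63}), which is precisely the direct read-off you carry out (with the added, harmless care about well-definedness and the identification of the coefficient field).
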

\begin{proof}This follows from the description of the residues in (\ref{e62}) and (\ref{e63}).
\end{proof}

\begin{lemma}
Let $n\geq 1$, $\ell\geq 1$, and let $\kln\subset \kln(z_1,\ldots,z_r)\subset K$ be fields with $K/\kln(z_1,\ldots,z_r)$ a prime to $p$ extension and the $z_i$'s algebraically independent indeterminates over $\kln$. Then for any integer $e$ which is prime to $p$, 
\[e\cdot\res{K/k_{\ell,n}}(\gen_k(n+1,\ell,p)) \ne 0.\]
\label{c11}
\end{lemma}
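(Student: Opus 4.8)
The plan is to reduce to the case $e=1$ and then induct on $n$, using the \emph{second} residue $\partial_2$ attached to the $y_{\ell,n}$-adic valuation. Since $\H^{n+1}_p$ of any field is $p$-torsion and $p\nmid e$, the class $e\,\res_{K/k_{\ell,n}}(\gen_k(n+1,\ell,p))$ vanishes if and only if $\res_{K/k_{\ell,n}}(\gen_k(n+1,\ell,p))$ does, so it suffices to prove $\res_{K/k_{\ell,n}}(\gen_k(n+1,\ell,p))\ne 0$. By Lemma~\ref{l2}, $\partial_2$ of the length-$\ell$ generic $p$-symbol of degree $n+1$ is (the restriction of) a length-$1$ generic $p$-symbol of degree $n$; because the output always has length $1$, an induction on $n$ alone will suffice, the inductive hypothesis being invoked only for length $1$.

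\emph{Base case $n=1$.} Here $\gen_k(2,\ell,p)$ is the class of the $p$-algebra $D_\ell=\otimes_{i=1}^{\ell}[x_i,y_{i,1})$ over $k_{\ell,1}$. Lemma~\ref{l2} applied to the $y_{\ell,1}$-adic valuation identifies $\partial_2(\gen_k(2,\ell,p))$ with the class of $x_\ell$ in $\H^1_p(\overline k_{\ell,1})=\overline k_{\ell,1}/\wp(\overline k_{\ell,1})$; since $\overline k_{\ell,1}\cong k_{\ell-1,1}(x_\ell)$ with $x_\ell$ transcendental, a short pole/degree argument on a putative solution of $a^p-a=x_\ell$ shows this class is nonzero, so $[D_\ell]\ne 0$ in ${}_p\br(k_{\ell,1})$. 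Being a sum of classes of exponent dividing $p$, $[D_\ell]$ has exponent exactly $p$, hence index a power of $p$ strictly larger than $1$. Restriction then keeps it nonzero: $\br(k_{\ell,1})\hookrightarrow\br(k_{\ell,1}(z_1,\dots,z_r))$ since that is a rational extension, and a prime-to-$p$ extension cannot lower a $p$-power index, so $\res_{K/k_{\ell,1}}([D_\ell])\ne 0$.

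\emph{Inductive step.} Fix $n\ge 2$ and assume $\res_{K'/k_{1,n-1}}(\gen_k(n,1,p))\ne 0$ for every field $K'$ as in the lemma. Suppose for contradiction that $\res_{K/k_{\ell,n}}(\gen_k(n+1,\ell,p))=0$. As in Example~\ref{ex1}, extend $v_{\ell,n}$ (first to $k_{\ell,n}(z_1,\dots,z_r)$, then) to a discrete valuation $v$ on $K$ with $e(v/v_{\ell,n})$ and $f(v/v_{\ell,n})$ both prime to $p$, and pass to completions; since $\gen_k(n+1,\ell,p)\in U_0(\widehat k_{\ell,n})$ and restriction carries $U_0$ into $U_0$ (as the scalar-extension map does in Lemma~\ref{l4}), its image in $\H^{n+1}_p(\widehat K)$ lies in $U_0$, so $\partial_2$ is defined on it. That image is $0$, so $\partial_2$ of it is $0$; on the other hand, combining Lemma~\ref{l2} with the $m=0$ case of Lemma~\ref{l4} — whose map $\psi_0$ multiplies the $\partial_2$-component by the ramification index and adds nothing else — gives
\[
0=\partial_2\bigl(\res_{\widehat K}(\gen_k(n+1,\ell,p))\bigr)=e(v/v_{\ell,n})\cdot\res_{\overline K}\Bigl(x_\ell\frac{dy_{\ell,1}}{y_{\ell,1}}\wedge\cdots\wedge\frac{dy_{\ell,n-1}}{y_{\ell,n-1}}\Bigr).
\]
Since $e(v/v_{\ell,n})$ is prime to $p$ and the group is $p$-torsion, the second factor is $0$. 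But after renaming $x_\ell\mapsto x_1$ and $y_{\ell,j}\mapsto y_{1,j}$ that factor is $\res_{\overline K}(\gen_k(n,1,p))$, and $\overline K$ is a prime-to-$p$ extension of $\overline{k_{\ell,n}}(z_1,\dots,z_r)$, hence of a purely transcendental extension of $k_{1,n-1}$ — so the inductive hypothesis makes it nonzero, a contradiction.

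\emph{Main obstacle.} The delicate point is the residue computation: one needs $\res_{\widehat K}$ to carry $U_0$ into $U_0$ so that $\partial_2$ is even defined on the restricted class, and one needs the precise scalar-extension formula for $\partial_2$; both are supplied by Izhboldin's filtration and Lemma~\ref{l4}(3). The reason I use $\partial_2$ rather than $\partial_1$ is exactly that the second residue acquires no logarithmic-correction term $\omega\wedge d\bar u/\bar u$ under a ramified extension, so no cross-terms appear and the computation stays clean. The remaining work is routine bookkeeping — that $\overline K$ is a prime-to-$p$ extension of a rational function field over $k_{1,n-1}$ and that $e(v/v_{\ell,n})$, $f(v/v_{\ell,n})$ may be taken prime to $p$, just as in Example~\ref{ex1} — together with, in the base case, two standard Brauer-group facts.
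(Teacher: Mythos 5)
Your proof is correct and follows essentially the same route as the paper's: induction on the degree $n$, with the $n=1$ base case handled by index/exponent facts in the Brauer group and the inductive step handled by applying the second residue $\partial_2$ for a prime-to-$p$ extension of the $y_{\ell,n}$-adic valuation via Lemma~\ref{l4}(3) and Lemma~\ref{l2}, reducing to the length-one symbol of degree $n$ over $\overline K$. The only cosmetic differences are that you normalize $e=1$ up front (the paper carries $e$ through) and that you establish nontriviality of $[D_\ell]$ in the base case by an Artin--Schreier residue computation rather than by citing the index $p^\ell$ of the generic abelian crossed product; both are valid.
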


\begin{proof} We proceed by induction on $n$. When $n=1$, we can reduce notation a bit by setting $y_{i1} = y_i$, so that the field $k_{\ell,1} = k(x_1,y_1,\ldots,x_\ell,y_\ell)$ and the element $\gen(2,\ell,p)$ corresponds to the 1-form 
\[\gen(2,\ell,p) = \sum_{i=1}^\ell x_i\frac{dy_{i}}{y_{i}}\]
Under the isomorphism $\H^2_p(k_{\ell,1}) \cong {}_p\br(k_{\ell,1})$ (\cite[9.2.5]{GS}) $\gen(2,\ell,p)$ maps to the class of the length $\ell$ generic $p$-symbol algebra $\otimes_{i=1}^\ell[x_i,y_{i})$. The index of $\otimes_{i=1}^\ell[x_i,y_{i})$ is $p^\ell$ and the exponent is $p$ which can be seen via generic abelian crossed product $p$-algebras (\cite[2.7]{MR857567} or \cite[p.4]{MR1923420}).  A purely transcendental extension $k_{\ell,1}\subset k_{\ell,1}(z_1,\ldots,z_r)$, a prime to $p$ extension $K$ and multiplication by $e$ all give injections of the $p$-torsion part of the Brauer group (\cite{MR1692654}), proving the result in this case.

Fix $n>1$ and assume the theorem holds for $\gen_k(n_0+1,\ell,p)$ for all $1\leq n_0<n$, $\ell\geq 1$ and  fields $k$ of characteristic $p$. Let $K/k_{\ell,n}$  and $e$ be as in the statement of the theorem. Choose a valuation $v$ on $K$ which extends $v_{{\ell,n}}$, the  $y_{\ell,n}$-adic valuation on $k_{\ell,n}(z_1,\ldots,z_r)$ so that both the residue degree $f(v/v_{{\ell,n}})$ and ramification degree $e(v/v_{{\ell,n}})$ are prime to $p$ (see Example \ref{ex1}). Let $\widehat K$ and $\widehat k_{\ell,n}(z_1,\ldots,z_r)$ be the respective completions and consider the second residue maps, $\partial_2$, on these fields.  By Lemma \ref{l4}(\ref{e45}) we have
\[\xymatrix{
\H^{n+1}_p(K)\ar[r]^{\textrm{res}}&\H^{n+1}_\ur(\widehat K)\ar[r]^{\partial_2}& \H^{n}_p(\overline K)\\
&\H^{n+1}_\ur(\widehat k_{\ell,n}(z_1,\ldots,z_r))\ar[r]^{\partial_2}\ar[u]^{e\cdot \textrm{res}}&\H^{n}_p(\overline{k_{\ell,n}(z_1,\ldots,z_r)})\ar[u]_{e\cdot e(v/v_{{\ell,n}})\cdot\textrm{res}}\\
\H^{n+1}_p(k_{\ell,n})\ar[r]^{\textrm{res}}\ar[uu]^{e\cdot \textrm{res}}&\H^{n+1}_\ur(\widehat k_{\ell,n})\ar[r]^{\partial_2}\ar[u]^{\textrm{res}}&\H^{n}_p(\overline k_{\ell,n})\ar[u]_{\textrm{res}}
}\]
Using Lemma \ref{l2} and tracing the diagram in both directions shows that if $e\cdot\res{K/k_{\ell,n}}(\gen_k(n+1,\ell,p)) =0$ then 
\begin{equation}e\cdot e(v/v_{{\ell,n}})\cdot\res{\overline K/\overline k_{\ell,n}}\left(x_\ell\frac{dy_{\ell,1}}{y_{\ell,1}}\wedge \cdots \wedge \frac{dy_{\ell,n-1}}{y_{\ell,n-1}}\right)=0.
\label{e77}
\end{equation}

The field extension $\overline K/\overline k_{\ell,n}$ can be decomposed into extensions
\[\xymatrix{
\overline K\\
\overline{k_{\ell,n}(z_1,\ldots,z_r)}\cong k_{\ell-1,n}(x_\ell,y_{\ell,1}\ldots,y_{\ell,n-1},z_1,\ldots,z_r)\ar@{-}[u]^{f(v/v_{{\ell,n}})}_{\textrm{  prime to }p}\\
\overline{k_{\ell,n}} \cong  k_{\ell-1,n}(x_\ell,y_{\ell,1}\ldots,y_{\ell,n-1})\ar@{-}[u]_{\textrm{purely transcendental}}\\
k(x_\ell,y_{\ell,1},\ldots,y_{\ell,n-1})\ar@{-}[u]_{\textrm{purely transcendental}}\\
}\]
In other words, the extension is a composition of purely transcendental extensions followed by a prime to $p$ extension. Since, up to numbering, $x_\ell\frac{dy_{\ell,1}}{y_{\ell,1}}\wedge\cdots\wedge\frac{dy_{\ell,n-1}}{y_{\ell,n-1}} = \gen_k(n,1,p)$, (\ref{e77}) violates the induction hypothesis.
\end{proof}

Let $k$ be an algebraically closed field of characteristic $p$. Let $K/ k_{\ell,n}(z_1,\ldots,z_r)$ be a finite prime to $p$ extension as above and fix the uniformizer $y_{\ell,n}$ for the valuation $v_{{\ell,n}}$ on $k_{\ell,n}(z_1,\ldots,z_r)$. As in the proof of Corollary \ref{c11} choose an extension $v$ of $v_{{\ell,n}}$ to $K$ with both $e(v/\vyln)$ and $f(v/\vyln)$ prime to $p$. Let $\widehat K$ and $\widehat k_{\ell,n}$ be the corresponding completions and $\overline K$ and $\overline k_{\ell,n}$ the residue fields with respect to these valuations.

\begin{corollary} Let $K/k_{\ell,n}$ be as above with valuations $v$ and $\vyln$. For any $n, \ell\geq 1$, $\partial_2(\gen_k(n+1,\ell,p)_{\widehat K})\ne 0$.
\label{cor56}
\end{corollary}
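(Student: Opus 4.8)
The plan is to compute $\partial_2(\gen_k(n+1,\ell,p)_{\widehat K})$ explicitly by relating it, via the scalar-extension behavior of the first and second residues (Lemma~\ref{l4}(\ref{e45})), to $\partial_2$ of the generic symbol over the base field $\widehat k_{\ell,n}$, which is computed in Lemma~\ref{l2}. First I would record that $\gen_k(n+1,\ell,p)$ lies in $U_0(\widehat k_{\ell,n})$ with respect to the $y_{\ell,n}$-adic valuation, so its first and second residues are defined, and that the chosen extension $v$ of $v_{\ell,n}$ to $K$ has $e=e(v/v_{\ell,n})$ and $f(v/v_{\ell,n})$ both prime to $p$. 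Then, exactly as in the diagram in the proof of Corollary~\ref{c11}, I would factor the restriction $\H^{n+1}_p(k_{\ell,n}) \to \H^{n+1}_p(\widehat K)$ through $\widehat k_{\ell,n}(z_1,\ldots,z_r)$ (a purely transcendental extension, on which $\partial_2$ commutes with restriction up to the relevant multiplicity) and then through the prime-to-$p$ extension $\widehat K/\widehat k_{\ell,n}(z_1,\ldots,z_r)$, applying Lemma~\ref{l4}(\ref{e45}) at the last step. Tracing the diagram and using Lemma~\ref{l2} yields
\[
\partial_2(\gen_k(n+1,\ell,p)_{\widehat K}) = e\,\res{\overline K/\overline k_{\ell,n}}\!\left(x_\ell\frac{dy_{\ell,1}}{y_{\ell,1}}\wedge\cdots\wedge\frac{dy_{\ell,n-1}}{y_{\ell,n-1}}\right),
\]
up to a unit coming from the $\psi_0$ correction term, which contributes only to the first-residue slot and hence does not affect $\partial_2$.

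Since $p\nmid e$, it now suffices to show that $\res{\overline K/\overline k_{\ell,n}}\bigl(x_\ell\frac{dy_{\ell,1}}{y_{\ell,1}}\wedge\cdots\wedge\frac{dy_{\ell,n-1}}{y_{\ell,n-1}}\bigr)$ is nonzero in $\H^n_p(\overline K)$. After renumbering, this class is exactly $\gen_k(n,1,p)$ over the field $k(x_\ell,y_{\ell,1},\ldots,y_{\ell,n-1})$, and $\overline K$ is obtained from this field by a purely transcendental extension (to $\overline k_{\ell,n}$) followed by a further purely transcendental extension and then a prime-to-$p$ extension — precisely the hypothesis of Lemma~\ref{c11} with $e=1$. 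Hence $\res{\overline K}(\gen_k(n,1,p))\ne 0$, and consequently $e$ times it is nonzero as well, so $\partial_2(\gen_k(n+1,\ell,p)_{\widehat K})\ne 0$, which also shows $\gen_k(n+1,\ell,p)_{\widehat K}$ is itself nonzero.

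The main obstacle I anticipate is bookkeeping rather than mathematical depth: one must be careful that the extension $\widehat k_{\ell,n} \to \widehat K$ really does split as ``purely transcendental, then finite prime-to-$p$'' in a way compatible with the completions, so that Lemma~\ref{l4}(\ref{e45}) applies with the uniformizer $y_{\ell,n}$ matched correctly and the ramification/residue multiplicities are the prime-to-$p$ integers $e$ and $f$; this is handled by the structure noted in Example~\ref{ex1}. One also needs to confirm that the unit $u$ with $y_{\ell,n}=u\tau^e$ (for $\tau$ a uniformizer of $\widehat K$) produces, via $\psi_0$, only a term $\omega_{n-1}\wedge d\bar u/\bar u$ in the $\H^{n+1}_p$-slot and a scalar $e$ in the $\H^n_p$-slot, so that the second residue is genuinely multiplied by the unit $e$ and nothing more. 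Once these compatibilities are in place, the nonvanishing is immediate from Lemma~\ref{c11}.
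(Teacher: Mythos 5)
Your proposal is correct and follows essentially the same route as the paper: compute $\partial_2$ over $\widehat K$ by combining Lemma \ref{l2} with the scalar-extension formula of Lemma \ref{l4}(\ref{e45}), observe that the $\psi_0$ correction only touches the first-residue slot, identify the result (after renumbering) as $e(v/\vyln)$ times the restriction of $\gen_k(n,1,p)$ to $\overline K$, and invoke Lemma \ref{c11} since $\overline K$ is a prime-to-$p$ extension of a purely transcendental extension of $k(x_\ell,y_{\ell,1},\ldots,y_{\ell,n-1})$. No gaps.
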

\begin{proof} By Lemma \ref{l2} the first and second residues of $\gen_k(n+1,\ell,p)_{\widehat k_{\ell,n}}$ are sums of generic $p$-symbols (at least after re-numbering the variables) with scalars extended to $\overline k_{\ell,n}$. Recall $\overline k_{\ell,n}$ is isomorphic to $k_{\ell-1,n}(x_\ell,y_{\ell,1},\ldots,y_{\ell,n-1})$ and therefore $\overline K$ is a prime to $p$ extension of a purely transcendental extension of $k(x_\ell,y_{\ell,1},\ldots,y_{\ell,n-1})$. By Lemma \ref{l4}(\ref{e45}) 
\[\partial_2(\gen_k(n+1,\ell,p)_{\widehat K}) = e(v/\vyln)\cdot\res{\overline K/k(x_\ell,y_{\ell,1},\ldots,y_{\ell,n-1})}\left(x_\ell\frac{dy_{\ell,1}}{y_{\ell,1}}\wedge\cdots\wedge\frac{dy_{\ell,n-1}}{y_{\ell,n-1}}\right)\]
The latter is nonzero by Lemma \ref{c11}.
\end{proof}

Following the notation in \cite{Reichstein-ICM}, for $D \in \H^{n+1}_p(K)$ we denote the {\it essential dimension} of $D$ as an element of $\H^{n+1}_p(K)$ over $k$ by $\ed_k(D)$ and the $p$-essential dimension by $\ed_k(D;p)$.  

\begin{lemma}
$\ed_k(\gen_k(n+1,1,p)) = \ed_k(\gen_k(n+1,1,p); p) = n+1$.\label{l3}
\end{lemma}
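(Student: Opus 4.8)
The claim is that $\ed_k(\gen_k(n+1,1,p)) = \ed_k(\gen_k(n+1,1,p);p) = n+1$. Since $\ed_k(-) \geq \ed_k(-;p)$ always, and the symbol $\gen_k(n+1,1,p)$ visibly lives over the subfield $k(x_1,y_{1,1},\ldots,y_{1,n}) = k_{1,n}$ of transcendence degree $n+1$, the upper bound $\ed_k(\gen_k(n+1,1,p)) \leq n+1$ is immediate (the symbol is already defined over a field generated by $n+1$ algebraically independent elements). So the entire content is the lower bound $\ed_k(\gen_k(n+1,1,p);p) \geq n+1$. The plan is to run exactly the $\ell = 1$ base case of the induction on length that appears in the proof of Proposition~\ref{e887}, but now in Milne-Kato $p$-cohomology rather than Galois cohomology.

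\textbf{Strategy for the lower bound.} Suppose for contradiction that $\ed_k(\gen_k(n+1,1,p);p) \leq n$. Then there is a prime-to-$p$ extension $K/k_{1,n}$, a subfield $k \subset E \subset K$ with $\trdeg_k(E) \leq n$, and a class $g \in \H^{n+1}_p(E)$ with $\res{K/E}(g) = \res{K}(\gen_k(n+1,1,p))$. The first step is to argue that any such $E$ has $\H^{n+1}_p(E) = 0$, so that $\res{K}(\gen_k(n+1,1,p)) = 0$; this is the characteristic-$p$ analogue of the vanishing $\H^{n+1}(E,\mu_p^{\otimes n}) = 0$ for $\trdeg_k E = n$ used in the Galois case. (For $p$-cohomology this should follow from the fact that $\H^{n+1}_p$ of a field of $p$-rank $\leq n$ vanishes, since $\Omega^n_E$ vanishes or degenerates appropriately — concretely, if $E$ has $p$-basis of size $\leq n$ then $\Omega^n_E$ is at most $1$-dimensional over $E^p$ and a Cartier/Artin–Schreier argument kills $\H^{n+1}_p(E)$; when the $p$-rank is $<n$, $\Omega^n_E = 0$ outright.) Then the second step is simply to invoke Lemma~\ref{c11} (or Corollary~\ref{cor56}) with $e = 1$ and no $z_i$'s: $\res{K/k_{1,n}}(\gen_k(n+1,1,p)) \neq 0$ for any prime-to-$p$ extension $K/k_{1,n}$. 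This directly contradicts $\res{K}(\gen_k(n+1,1,p)) = 0$, giving $\ed_k(\gen_k(n+1,1,p);p) \geq n+1$.

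\textbf{Where the work is.} The main obstacle is the vanishing statement $\H^{n+1}_p(E) = 0$ when $\trdeg_k E \leq n$ with $k$ algebraically closed (equivalently, when the $p$-rank of $E$ is $\leq n$). One has to be a little careful: transcendence degree $\leq n$ over an algebraically closed (hence perfect) field $k$ forces the $p$-rank of $E$ to be $\leq n$, and then $\Omega^n_E$ is a vector space of dimension $\binom{n}{n} = 1$ over $E$ at most, so $\Omega^{n}_E/d\Omega^{n-1}_E$ is killed by the map $\wp$; this is a standard fact (it is the characteristic-$p$ cohomological dimension statement, see e.g. Izhboldin or Kato), but I would want to cite it cleanly rather than reprove it. Everything else — the restriction behavior of $p$-cohomology under field extensions, the nonvanishing of the generic symbol after a prime-to-$p$ extension — is already packaged in Lemma~\ref{c11}, so the proof itself will be short: quote the upper bound, quote the vanishing, quote Lemma~\ref{c11}, done.
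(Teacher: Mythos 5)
Your overall route is the same as the paper's: the upper bound because the symbol is defined over $k_{1,n}$, and the lower bound by showing $\H^{n+1}_p(E)=0$ for any candidate field of descent $E$ with $\trdeg_k(E)\le n$ and then contradicting Lemma~\ref{c11}. However, the one step you yourself flag as ``where the work is'' is justified incorrectly, and the error matters exactly in the boundary case. The vanishing $\H^{n+1}_p(E)=0$ does \emph{not} follow from the $p$-rank of $E$ being at most $n$. When $\trdeg_k(E)<n$ the $p$-rank is $<n$, so $\Omega^n_E=0$ and the vanishing is trivial, as you say. But when $\trdeg_k(E)=n$ the $p$-rank can equal $n$, in which case $\Omega^n_E$ is a nonzero $E$-vector space (of dimension $1$ over $E$, not over $E^p$), and no Cartier/Artin--Schreier dimension count forces $\wp$ to be surjective modulo exact forms. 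Concretely, $\mathbb F_p(t)$ and $\mathbb F_p((t))$ both have $p$-rank $1$, yet $\H^2_p = {}_p\br$ is nonzero for each; so ``$p$-rank $\le n$ implies $\H^{n+1}_p=0$'' is false as a general statement about fields of characteristic $p$.

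What actually kills $\H^{n+1}_p(E)$ when $\trdeg_k(E)=n$ is an arithmetic input from the base field, not a differential-form count: since $k$ is algebraically closed, Tsen--Lang makes $E$ a $C_r$ field for some $r<n+1$, and the vanishing of $\H^{n+1}_p$ for such fields is exactly what the paper cites (\cite{MR2600061}); for $n=1$ this is Tsen's theorem $\br(E)=0$. Replace your parenthetical justification by this $C_r$ argument (or an equivalent clean citation) and the proof is complete; everything else in your write-up --- the upper bound and the appeal to Lemma~\ref{c11} with $e=1$ and no $z_i$'s --- coincides with the paper's proof.
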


\begin{proof} The essential dimension is bounded above by $n+1$  since $\gen_k(n+1,1,p)$ is defined over $k_{1,n}=k(x_1,y_{11},\ldots, y_{1,n})$. For the lower bound, suppose there is a  prime-to-$p$-extension $K/k_{1,n}$, a field $k\subset E\subset K$ and a $g \in \H^{n+1}_p(E)$ so that $\res{K/E}(g)=\res{K/k_{1,n}}(\gen_k(n+1,1,p))$. If $\trdeg_k(E)<n+1$, then $E$ is $C_r$ for some $r<n+1$, hence $\H^{n+1}_p(E) = 0$ as in \cite{MR2600061}. This contradicts Lemma \ref{c11}.
\end{proof}

\begin{remark} The proof of Lemma \ref{l3} also shows that $\ed_k(\gen_k(n+1,\ell,p))\geq \ed_k(\gen_k(n+1,\ell,p);p)\geq n+1$. But this result will be subsumed by Theorem \ref{t1}.
\end{remark}

\begin{theorem}[Generalization of Babic \& Chernousov's 11.3] Let $\ell,n\geq 1$ and $(K,v)$ a valued prime to $p$ extension of $(k_{\ell,n}(z_1,\ldots,z_r),v_{{\ell,n}})$ with residue degree and ramification index prime to $p$. There does not exists a differential basis $B = \{a_1,\ldots,a_{\ell(n+1)+r}\}$ for $K/k$ coming from $\overline K$ such that $\res{K}(\gen_k(n+1,\ell,p))$ descends to $K^p(a_1,\ldots, a_{n+\ell-2})$. 
\label{t22}
\end{theorem}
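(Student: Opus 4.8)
The plan is to argue by induction on the length $\ell$, mirroring the characteristic $\ne p$ argument of Proposition~\ref{e887} but using the machinery of Izhboldin's filtration and Proposition~\ref{p1} to handle the subtlety that an arbitrary descent class need not lie in $U_0$. Suppose for contradiction that such a differential basis $B = \{a_1,\dots,a_{\ell(n+1)+r}\}$ for $K/k$ coming from $\overline K$ exists and that $\res{K}(\gen_k(n+1,\ell,p))$ descends to $L := K^p(a_1,\dots,a_{n+\ell-2})$; write $g \in \H^{n+1}_p(L)$ for the descended class, so $\res{K}(g) = \res{K}(\gen_k(n+1,\ell,p))$.

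First I would pass to the completion $\widehat K$ with respect to the valuation $v$, using the uniformizer $a_{i_0} = \pi$ from the differential basis $B$, so that $\widehat K \cong K_1((\pi))$ with coefficient field $K_1$ containing all $a_i$ with $i \ne i_0$. The key input is Lemma~\ref{lh2} applied with $s = n+\ell-2$: it lets me write $g_{\widehat K} = g_m + \cdots + g_0$ in the form prescribed there, and it controls the $U_0$-piece — namely $\partial_2(g_0) = 0$ when $i_0 > s$, and in all cases $\partial_1(g_0)$ descends to a suitable $K_1^p(a_1',\dots)$ with at most $n+\ell-2$ (or $n+\ell-3$) generators. On the other hand, $\res{K}(g) = \res{K}(\gen_k(n+1,\ell,p))$ is an element of $U_0(\widehat K)$, and by Corollary~\ref{cor56} its second residue $\partial_2$ is nonzero, while by Lemma~\ref{l2} its first residue is $e(v/v_{\ell,n})\cdot\res{\overline K}(\gen_k(n+1,\ell-1,p))$ up to renumbering. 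The idea is to show that the extra terms $g_1,\dots,g_m$ appearing in $g_{\widehat K}$ cannot contribute to the $U_0$-component after rewriting into canonical form: this is exactly where Proposition~\ref{p2} enters, telling me that when I put $g_{\widehat K}$ in the canonical form of Lemma~\ref{l1}, the $U_0$-term is unchanged, i.e.\ equals $g_0$. Combining, $g_0 = \res{K}(\gen_k(n+1,\ell,p))$ inside $U_0(\widehat K) = \H^{n+1}_\ur(\widehat K)$, hence $\partial_1(g_0)$ and $\partial_2(g_0)$ agree with the residues of the generic symbol.

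Now I would derive a contradiction by case analysis on $i_0$. If $i_0 > s = n+\ell-2$, then $\partial_2(g_0) = 0$ by Lemma~\ref{lh2}(1), contradicting $\partial_2(\gen_k(n+1,\ell,p)_{\widehat K}) \ne 0$ from Corollary~\ref{cor56} — unless the renumbering forces us to track where the "$x_\ell$" slot went, in which case I instead use that $\partial_1(g_0) = e(v/v_{\ell,n})\res{\overline K}(\gen_k(n+1,\ell-1,p))$ descends, via Lemma~\ref{lh2}(2), to a field of the form $K_1^p(a_1',\dots,a_s')$ — i.e.\ of transcendence-degree type at most $n+(\ell-1)-2$ over $k$ after unwinding — which contradicts the induction hypothesis for length $\ell-1$ (the base case $\ell = 1$ being Lemma~\ref{l3}, since a descent to $K^p(a_1,\dots,a_{n-1})$ would force the class into $\H^{n+1}_p$ of a field of transcendence degree $< n+1$, which vanishes). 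If $i_0 \le s$, say $i_0 = s$ after reordering, then Remark~\ref{r3} together with the ramification hypothesis on $v|_L$ must be invoked to pin down whether the collapse can occur among the first $s$ indices; here Lemma~\ref{lh2}(2b) gives that $\partial_1(g_0)$ descends to $K_1^p(a_1',\dots,a_{s-1}')$, again too few parameters to support $\gen_k(n+1,\ell-1,p)$ by induction.

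The main obstacle I anticipate is the bookkeeping around the renumbering of variables and the ramification index of $v$ restricted to $L$: one must ensure that after taking residues the length drops by exactly one and the transcendence-degree budget drops by exactly one, so that the induction hypothesis applies cleanly — and one must separately handle the two positions of $i_0$ and the possibility (controlled by Remark~\ref{r3}) that $v|_L$ has ramification divisible by $p$, which changes which $a_i$ are units. A secondary technical point is verifying that the forms produced by Lemma~\ref{lh2} genuinely fit the hypotheses of Proposition~\ref{p2} (i.e.\ each $g_r$ for $r>0$ has the shape $g_r/\pi^r + g_r'/\pi^r \wedge d\pi/\pi$ with coefficients in the residue field), so that the "$U_0$ is preserved" conclusion is available; this should follow directly from the form of the elements in~(\ref{e99})--(\ref{eq86}), but it needs to be stated carefully before Proposition~\ref{p2} can be cited.
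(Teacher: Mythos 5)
Your inductive step is essentially the paper's argument: pass to $\widehat K\cong K_1((\pi))$, use Lemma~\ref{lh2} to put $g_{\widehat K}$ in the shape required by Proposition~\ref{p2}, conclude that $g_{\widehat K}$ equals its $U_0$-piece $g_0$, and then split on the position $i_0$ of the uniformizer in the differential basis --- contradicting Corollary~\ref{cor56} via $\partial_2(g_0)=0$ when $i_0>n+\ell-2$, and contradicting the induction hypothesis via the descent of $\partial_1(g_0)=\res{K_1}(\gen_k(n+1,\ell-1,p))$ to $K_1^p(a_1',\ldots,a_{n+\ell-3}')$ when $i_0\leq n+\ell-2$. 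The hedge in your $i_0>s$ case is unnecessary (Corollary~\ref{cor56} applies directly), and the factor $e(v/v_{\ell,n})$ belongs to $\partial_2$ rather than $\partial_1$, but these are cosmetic.

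There is, however, a genuine error in your base case. You assert that a descent to $K^p(a_1,\ldots,a_{n-1})$ forces the class into $\H^{n+1}_p$ of a field of transcendence degree $<n+1$, which vanishes by the $C_r$ argument of Lemma~\ref{l3}. This is false: the field $L=K^p(a_1,\ldots,a_{n-1})$ contains $K^p$, so $\trdeg_k(L)=\trdeg_k(K)=\ell(n+1)+r\geq n+1$, and there is no reason for $\H^{n+1}_p(L)$ to vanish. Having few generators \emph{over $K^p$} is not the same as having small transcendence degree over $k$; Proposition~\ref{p1} converts small transcendence degree into few generators over $K^p$, but the implication does not reverse. The correct base-case argument is Lemma~\ref{l999}: because $L$ is generated over $K^p$ by $n-1<n$ elements, every $n$-form from $L$ dies under restriction to $K$, so the extension-of-scalars map $\H^{n+1}_p(L)\to\H^{n+1}_p(K)$ is the zero map, contradicting $\res{K}(\gen_k(n+1,1,p))\neq 0$ from Lemma~\ref{c11}. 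Note this is a statement about the restriction map being zero, not about the source group being zero.
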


\begin{proof}This proof follows the general outline of the proof of \cite[11.3]{bc} and proceeds by induction on the length of the generic symbol, $\ell$. {\it Case $\ell=1$}. 
Let $B = \{a_1,\ldots, a_{n+1+r}\}$ be a differential basis for $K/k$ coming from $\overline K$ and $g \in \H^{n+1}_p(L)$ with $L = K^p(a_1,\ldots, a_{n-1})$ so that $\res{K/L}(g) = \res{K}(\gen_k(n+1,1,p))$. By Lemma \ref{l999} the extension of scalars map $\Omega^{n}_L \to \Omega^{n}_K$ is the zero map, hence the extension of scalars $\H^{n+1}_p(L) \to \H^{n+1}_p(K)$ is also the zero map. This contradicts $\res{K}(\gen_k(n+1,1,p)) \ne 0$ (Lemma \ref{c11}).

Fix $\ell>1$ and assume the theorem holds for $\gen_k(n+1,\ell_0,p)$ with $1\leq\ell_0 <\ell$ for all algebraically closed fields $k$ of characteristic $p$, all $n\geq 1$ and all $r\geq 0$. Assume there exists a differential basis $B = \{a_1,\ldots, a_{\ell(n+1)+r}\}$ for $K/k$ coming from $\overline K$ and $g \in \H^{n+1}_p(L)$ with $L = K^p(a_1,\ldots, a_{\ell+n-2})$ such that $\res{K/L}(g) = \res{K}(\gen_k(n+1,\ell,p))$. Let $L$ and $K$ have completions $\widehat K$ and $\widehat L$ and residue fields $\overline K$ and $\overline L$ with respect to $v$. By \cite[7.8]{E} the differential basis $B$ for $K/k$ corresponds to a coefficient field $K_1\subset \widehat K$ containing $\{a_i \,|\,v(a_i) = 0\}$. Let $a_{i_0} = \pi$ be the uniformizer of $K$ in the differential basis so that $\widehat K \cong K_1((\pi))$ and $a_i \in K_1$ for all $i \ne i_0$.

Using Lemma \ref{lh2} we can write
\begin{equation}
g_{\widehat K}  = \res{\widehat K/L}g= g_m+\cdots+g_0
\label{eq66}
\end{equation}
where each $g_i$ is a sum of elements of the form $\frac{f}{\pi^i}\frac{da^{e_1}}{a^{e_1}}\wedge \cdots \wedge\frac{da^{e_{n}}}{a^{e_{n}}}$ with $f \in K_1$ and $e_i \in \Lambda_{\ell+n-2}$. We now consider the two cases $i_0>\ell+n-2$ and $i_0\leq \ell+n-2$ separately. 

If $i_0>\ell+n-2$ then $a^e \in K_1$ for all $e \in \Lambda_{\ell+n-2}$ and each $g_i$ in (\ref{eq66}) can be written as $g_i = \omega_i/\pi^i$ with $\omega_i \in \Omega^{n}_{K_1}$. In other words, $g_{\widehat K} = \sum_i \omega_i/\pi^i$ with $\omega_i \in \Omega^{n}_{K_1}$, so we can apply Proposition \ref{p2}. Since $g_{\widehat K} = \res{\widehat K}(\gen_k(n+1,\ell,p)) \in U_0(\widehat K)$, Proposition \ref{p2} says  $g_{\widehat K} = g_0$. Since $g_0 \in \Omega_{K_1}^{n}$, $0=\partial_2(g_{\widehat K}) = \partial_2(\gen(n+1,\ell,p)_{\widehat K})$.  This contradicts Corollary \ref{cor56}.

When $i_0 \leq \ell+n-2$ we derive a contradiction using the induction hypothesis. Assume $i_0 = \ell+n-2$ (after re-ordering if necessary). As in (\ref{eq66}) we can write $g_{\widehat K} = \sum g_i$ where $g_i$ are homogeneous with terms of the form 
\begin{equation}\frac{f}{\pi^i}\frac{da^{e_1}}{a^{e_1}}\wedge \cdots \wedge\frac{da^{e_{n}}}{a^{e_{n}}}.
\label{eq888}
\end{equation}
with $f \in K_1$ and $e_i \in \Lambda_{\ell+n-2}$. Using $a_{\ell+n-2} = a_{i_0}= \pi$, we want to separate out the uniformizers in the logarithmic differentials in (\ref{eq888}) as in the proof of Lemma \ref{lh2}. Set $e_i = (\epsilon_{i,1},\ldots,\epsilon_{i,i_0}) \in \Lambda_{\ell+n-2}$. Then
\[\frac{da^{e_i}}{a^{e_i}} = \frac{da^{e_i}\pi^{-\epsilon_{i,i_0}}}{a^{e_i}\pi^{-\epsilon_{i,i_0}}}+\epsilon_{i,i_0}\frac{d\pi}{\pi}\]
and $a^{e_i}\pi^{-\epsilon_{i,i_0}} \in K_1$ for all $i$. The terms in (\ref{eq888}) become 
\begin{eqnarray*}
\frac{f}{\pi^i}\frac{da^{e_1}}{a^{e_1}}\wedge \cdots \wedge\frac{da^{e_{n}}}{a^{e_{n}}}&=& \frac{f}{\pi^i}\left(\frac{da^{e_1}\pi^{-\epsilon_{1,i_0}}}{a^{e_1}\pi^{-\epsilon_{1,i_0}}} + \epsilon_{1,i_0}\frac{d\pi}{\pi}\right)\wedge \cdots \wedge \left(\frac{da^{e_n}\pi^{-\epsilon_{n,i_0}}}{a^{e_n}\pi^{-\epsilon_{n,i_0}}} + \epsilon_{n,i_0}\frac{d\pi}{\pi}\right)\\
&=&\frac{\omega_i}{\pi^i} + \frac{\nu_i}{\pi^i}\wedge\frac{d\pi}{\pi}
\end{eqnarray*}
with $\omega_i \in \Omega^{n}_{K_1}$ and $\nu_i \in \Omega^{n-1}_{K_1}$. $g_{\widehat K}$ is once again in a form in which we can apply Proposition \ref{p2};  $g_{\widehat K} = g_0$. Because the residue degree $f(v/v_{{\ell,n}})$ is prime to $p$, $K_1$ is a prime to $p$ extension of $\overline{k_{\ell,n}(z_1,\ldots,z_r)} \cong k_{\ell-1,n}(x_\ell,y_{\ell,1},\ldots, y_{\ell,n-1},z_1,\ldots,z_r)$, a purely transcendental extension of $k_{\ell-1,n}$. Let $w$ be an extension of the $y_{\ell-1,n}$-adic valuation on $k_{\ell-1,n}(x_\ell,y_{\ell,1},\ldots, y_{\ell,n-1},z_1,\ldots,z_r)$ to $K_1$ with prime to $p$ residue degree and ramification index and having completion $\widehat K_1$ and residue field $\overline K_1$. By Lemma \ref{lh2}(2b) there exists a differential basis $B' = \{a_1',\ldots, a_{\ell(n+1)+r-1}'\}$ for $K_1/k$ coming from $\overline K_1$ such that $K_1^p(a_1,\ldots,a_{\ell+n-3}) = K_1^p(a_1',\ldots,a_{\ell+n-3}')$ and $\partial_1(g_0)$ descends to $K_1^p(a_1',\ldots,a_{\ell+n-3}')$. Since $\partial_1(g_0) = \res{K_1}(\gen_k(n+1,\ell-1,p))$ and $K_1/k_{\ell-1,n}$ is a prime to $p$ extension of a purely transcendental extension of $k_{\ell-1,n}$, this contradicts our induction hypothesis.
\end{proof}

\begin{corollary}
$\ed_k(\gen_k(n+1,\ell,p);p) \geq \ell+n-1$.
\label{c21}
\end{corollary}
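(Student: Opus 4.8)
The plan is to argue by contradiction: I assume $\ed_k(\gen_k(n+1,\ell,p);p)\le \ell+n-2$ and manufacture exactly the descent that Theorem~\ref{t22} forbids. Unwinding the definition of $p$-essential dimension, this assumption produces a finite field extension $K/\kln$ of degree prime to $p$, an intermediate field $k\subseteq E\subseteq K$ with $\trdeg_k(E)\le \ell+n-2$, and a class $g\in\H^{n+1}_p(E)$ satisfying $\res{K/E}(g)=\res{K}(\gen_k(n+1,\ell,p))$. Since $[K:\kln]$ is prime to $p$ and $\mathrm{char}(k)=p$, the extension $K/\kln$ is separable algebraic, so $\trdeg_k(K)=\trdeg_k(\kln)=\ell(n+1)$.

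Next I would place on $K$ a valuation adapted to Theorem~\ref{t22}. As in Example~\ref{ex1}, the $y_{\ell,n}$-adic valuation $\vyln$ on $\kln$ extends to a discrete valuation $v$ on $K$ with $e(v/\vyln)$ and $f(v/\vyln)$ both prime to $p$; since $\vyln$ is geometric of rank $1$ and $\overline K/\okln$ is finite, $v$ is geometric of rank $1$ on $K$. The hypothesis $\ell,n\ge 1$ gives $\trdeg_k(E)\le \ell+n-2<\ell(n+1)=\trdeg_k(K)$, so Proposition~\ref{p1} applies and furnishes a differential basis $\{a_1,\dots,a_{\ell(n+1)}\}$ of $K/k$ coming from $\overline K$ with $E\subseteq K^p(a_1,\dots,a_t)$ for some $t\le\trdeg_k(E)\le \ell+n-2$. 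Enlarging the list of generators, $E\subseteq K^p(a_1,\dots,a_{\ell+n-2})$, and therefore
\[
\res{K}(\gen_k(n+1,\ell,p))=\res{K/E}(g)\in\im\!\left(\H^{n+1}_p\bigl(K^p(a_1,\dots,a_{\ell+n-2})\bigr)\to\H^{n+1}_p(K)\right);
\]
that is, $\res{K}(\gen_k(n+1,\ell,p))$ descends to $K^p(a_1,\dots,a_{\ell+n-2})$. With $r=0$ this is precisely the situation excluded by Theorem~\ref{t22}, a contradiction, and hence $\ed_k(\gen_k(n+1,\ell,p);p)\ge \ell+n-1$.

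The substance of the statement lies entirely in Theorem~\ref{t22}; this corollary is just its reformulation in the language of $p$-essential dimension. The only points needing care are routine bookkeeping: that a prime-to-$p$ extension in characteristic $p$ is automatically separable (so transcendence degree is preserved and a $p$-basis extends), that the valuation $v$ one chooses on $K$ remains geometric of rank $1$ because the residue extension is finite, and that the inequality $\ell+n-2<\ell(n+1)$ — needed to invoke Proposition~\ref{p1} — always holds for $\ell,n\ge 1$. So there is no real obstacle here; the genuine difficulty was already met in proving Theorem~\ref{t22}.
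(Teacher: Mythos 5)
Your proposal is correct and follows essentially the same route as the paper: assume a descent to a field $E$ of transcendence degree at most $\ell+n-2$, extend $v_{\ell,n}$ to $K$ with prime-to-$p$ ramification and residue degree as in Example~\ref{ex1}, apply Proposition~\ref{p1} to trap $E$ inside $K^p(a_1,\dots,a_{\ell+n-2})$, and contradict Theorem~\ref{t22}. The extra bookkeeping you supply (separability of prime-to-$p$ extensions, $\ell+n-2<\ell(n+1)$) is accurate but not a departure from the paper's argument.
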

\begin{proof}
Let $K/k_{\ell,n}$ be a prime to $p$ extension and $k\subset E\subset K$ be a subfield with $\trdeg_k(E)=\ell+n-2$. Assume $\res K(\gen_k(n+1,\ell,p))$ descends to $E$. Fix an extension $v$ of the valuation $\vyln$ to $K$ with residue degree and ramification index prime to $p$. By Example \ref{ex1} and Proposition \ref{p1} there exists a differential basis $\{a_1,\ldots,a_{\ell(n+1)}\}$ of $K/k$ coming from $\overline K$ such that $E\subset K^p(a_1,\ldots, a_t)$ with $t \leq \ell+n-2$. Since $\res K(\gen_k(n+1,\ell,p))$ descends to $E$ it also descends to $K^p(a_1,\ldots,a_t)\subseteq K^p(a_1,\ldots, a_{\ell+n-2})$. This contradicts Theorem \ref{t22}.
\end{proof}

The next theorem improves the lower bound for the essential dimension of $\gen_k(n+1,\ell,p)$ given in Corollary \ref{c21} by one.

\begin{theorem}
For $\ell,n\geq 1$, 
\[\ed_k(\gen_k(n+1,\ell,p))\geq \ed_k(\gen_k(n+1,\ell,p);p)\geq \ell+n.\]
\label{t1}
\end{theorem}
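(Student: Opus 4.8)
The plan is to upgrade the lower bound $\ell+n-1$ from Corollary~\ref{c21} to $\ell+n$ by one more application of the residue machinery, mimicking the passage from Proposition~\ref{e887}'s intermediate bound to its final statement in the characteristic $\neq p$ case. Suppose for contradiction that $\ed_k(\gen_k(n+1,\ell,p);p) \leq \ell+n-1$. Then there is a prime-to-$p$ extension $K/k_{\ell,n}$, a subfield $k \subset E \subset K$ with $\trdeg_k(E) = \ell+n-1$, and $g \in \H^{n+1}_p(E)$ with $\res{K/E}(g) = \res{K}(\gen_k(n+1,\ell,p))$. First I would fix, as in Example~\ref{ex1}, an extension $v$ of the $y_{\ell,n}$-adic valuation $v_{\ell,n}$ to $K$ with both $e(v/v_{\ell,n})$ and $f(v/v_{\ell,n})$ prime to $p$, and set $w = v|_E$. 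The first step is to argue that $w$ must be nontrivial on $E$: if $w$ were trivial then $\res{\widehat K/E}(g)$ would lie in $U_0$ with zero second residue, forcing $\partial_2(\gen_k(n+1,\ell,p)_{\widehat K}) = 0$, contradicting Corollary~\ref{cor56}. Hence $w$ is a rank-$1$ valuation, $\trdeg_k(\overline E) = \ell+n-2$, and $\overline E \subset \overline K$.

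The second step is the ramification-index bookkeeping that forces the residue to descend. Write $e = e(v/w)$ and let $\pi$ be a uniformizer for $\widehat K$, $\tau = u\pi^e$ a uniformizer for $\widehat E$ with $u$ a unit. Passing to completions and using Lemma~\ref{l4}, I would compute $\partial_2(\res{\widehat K}(g))$ in two ways: on one side it equals $e \cdot \res{\overline K}(\partial_2(g))$ (up to a unit factor from $\psi_m$), and on the other side, since $\res{\widehat K}(g) = \gen_k(n+1,\ell,p)_{\widehat K}$ and $e(v/v_{\ell,n})$ is prime to $p$, it equals $e(v/v_{\ell,n})\cdot\res{\overline K}\!\left(x_\ell\frac{dy_{\ell,1}}{y_{\ell,1}}\wedge\cdots\wedge\frac{dy_{\ell,n-1}}{y_{\ell,n-1}}\right)$ by Lemma~\ref{l2}, which is nonzero by Corollary~\ref{cor56}. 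Comparing, $e$ must be prime to $p$. Now the crucial point: I would compute $\partial_1$ of $\res{\widehat K}(g)$. By Lemma~\ref{l4}(\ref{e45}), $\partial_1(\res{\widehat K}(g)) = \partial_1(g) + \partial_2(g)\wedge\frac{d\bar u}{\bar u}$, while by Lemma~\ref{l2} the left side is $\res{\overline K}(\gen_k(n+1,\ell-1,p))$. Restricting further to a suitable prime-to-$p$ extension $K'/\overline k_{\ell,n}$ over which $\partial_2(\gen_k(n+1,\ell,p))$ splits — exactly the $k \rightsquigarrow k'$ base-change trick of Proposition~\ref{e887}, replacing $k$ by the algebraic closure $k'$ of $k(x_\ell,y_{\ell,1},\ldots,y_{\ell,n-1})$ inside $\overline K$ — the correction term $\partial_2(g)\wedge\frac{d\bar u}{\bar u}$ vanishes, so $\partial_1(g)$ descends and $\res{K'}(\gen_{k'}(n+1,\ell-1,p))$ is defined over $\overline E \cdot k'$.

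The third step is the transcendence-degree count that closes the induction. Over the new algebraically closed base $k'$, we have $\trdeg_{k'}(\overline E \cdot k') \leq \trdeg_k(\overline E) = \ell+n-2 = (\ell-1)+n-1$, and $K'$ is a prime-to-$p$ extension of a purely transcendental extension of $k'_{\ell-1,n}$ (since $f(v/v_{\ell,n})$ is prime to $p$ and $\overline k_{\ell,n} \cong k_{\ell-1,n}(x_\ell,y_{\ell,1},\ldots,y_{\ell,n-1})$). This exhibits $\gen_{k'}(n+1,\ell-1,p)$ as descending to a field of transcendence degree $\leq (\ell-1)+n-1$ after a prime-to-$p$ extension — i.e.\ $\ed_{k'}(\gen_{k'}(n+1,\ell-1,p);p) \leq (\ell-1)+n-1$ — contradicting the inductive hypothesis of Theorem~\ref{t1} (the base case $\ell=1$ being Lemma~\ref{l3}, which gives the bound $n+1 = \ell+n$). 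I expect the main obstacle to be the same one flagged in the text after Proposition~\ref{e887}: unlike the characteristic $\neq p$ setting, $\partial_1$ and $\partial_2$ are only defined on $U_0$, so one must first verify that $\res{\widehat K}(g)$ lies in $U_0$ before these residues can be applied to $g$ itself. This is precisely what Lemma~\ref{lh2} and Proposition~\ref{p2} are built to handle — one must run $g$ through its canonical $U_i$-decomposition, check that the $U_0$-component is genuinely what descends, and confirm the higher pieces cannot interfere — and carrying that verification through while simultaneously tracking the uniformizer $\pi = a_{i_0}$ among the differential-basis elements (the $i_0 > \ell+n-2$ versus $i_0 \leq \ell+n-2$ dichotomy of Theorem~\ref{t22}) is the technically delicate heart of the argument.
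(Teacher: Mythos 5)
Your overall skeleton matches the paper's: induction on $\ell$ with base case Lemma~\ref{l3}, a case analysis on $w=v|_E$, a comparison of first and second residues, the base change to $k'=k(x_\ell,y_{\ell,1},\ldots,y_{\ell,n-1})^{\mathrm{alg}}$ to kill the correction terms, and the transcendence-degree count. The final step and the triviality-of-$w$ step are right. But there is a genuine gap in your second step, and it is exactly at the point you flag as ``the technically delicate heart.'' Your deduction that $e=e(v/w)$ is prime to $p$ proceeds by writing $\partial_2(\res{\widehat K}(g)) = e\cdot\res{\overline K}(\partial_2(g))$; this already presupposes that $\partial_2(g)$ is defined, i.e.\ that $g_{\widehat E}\in U_0(\widehat E)$. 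You defer that verification to the end, but the verification the paper gives (via Lemma~\ref{l4} and the injectivity of $\Omega^n_{E_1}\to\Omega^n_{K_1}$ from Lemma~\ref{l78}) itself only works when $p\nmid e$: Lemma~\ref{l4}(1) needs $p\nmid em$, and in the $p\mid m$ subcase the conclusion $d(\omega_{n-1})=0$ comes from $e\bar u^{-m}$ being a \emph{nonzero} element of $K_1^p$. So the argument is circular as ordered, and the case $p\mid e(v/w)$ must be disposed of by a route that never applies residues to $g$ at all. That is the paper's Case~2: Proposition~\ref{p1} together with Remark~\ref{r3} puts $E$ inside $K^p(a_1,\ldots,a_t)$ with every $a_i$ a unit, and then Lemma~\ref{lh2}(1) and Proposition~\ref{p2} show $g_{\widehat K}$ equals its $U_0$-piece with vanishing second residue, contradicting Corollary~\ref{cor56}. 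Your proposal has no substitute for this case.

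A second, related misattribution: in the case $p\nmid e$, the tools that establish $g_{\widehat E}\in U_0(\widehat E)$ are not Lemma~\ref{lh2} and Proposition~\ref{p2} (those belong to Case~2 and to Theorem~\ref{t22}). What is actually needed is to extend a differential basis $\{a_1,\ldots,a_{\ell+n-2},\tau\}$ of $E$ coming from $\overline E$ to a differential basis of $K$ coming from $\overline K$, so as to get nested coefficient fields $E_1\subset K_1$ with compatible $p$-bases and hence the injections of Lemma~\ref{l78} on the graded pieces. Proving that this extension exists requires re-invoking Theorem~\ref{t22} inside the proof (otherwise the monomials $a^e$, $e\in\Lambda_{\ell+n-2}$, could be linearly dependent over $K^p$, which would force the generic symbol to descend to $K^p(a_1',\ldots,a_t',\tau)$ with $t+1\le\ell+n-2$). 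Your proposal treats Corollary~\ref{c21} as a finished ingredient to be ``upgraded by one more residue,'' but the incompressibility statement of Theorem~\ref{t22} has to be used again, in this structural way, to make the residue comparison on $g$ legitimate.
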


\begin{proof}The proof is by induction on the symbol length $\ell$ and follows the outline of the proof of \cite[10.2]{bc}. For $\ell=1$ we are done by Lemma \ref{l3}. Fix $\ell>1$ and assume the theorem holds for all $\gen_k(n+1,\ell_0,p)$ for all algebraically closed fields $k$ of characteristic $p$, all $n\geq 1$ and all $\ell_0<\ell$. Let $K/k_{\ell,n}$ be a finite prime to $p$ extension. Assume there exists a field $k\subset E \subset k_{\ell,n}$ with $\trdeg_k(E)= \ell+n-1$ and $g \in \H^{n+1}_p(E)$ such that $\res{K}(g) = \res{K}(\gen_k(n+1,\ell,p))$. As usual fix $v_{{\ell,n}}$ to be the $y_{\ell,n}$-adic valuation on $k_{\ell,n}$ and fix $v$, an extension of $\vyln$ to $K$ with both $e(v/\vyln)$ and $f(v/\vyln)$ prime to $p$. Write $\widehat K$ and $\overline K$ for the completion and residue field of $K$. All first and second residues considered below will be with respect to the valuation $v$ on $\widehat K$.

\medskip
\noindent
{\it Case 1.} $w = v|_E$ is the trivial valuation. Let $g \in \H^{n+1}_p(E)$ be the class of the $n$-form
\[\sum b_i\frac{dc_{i,1}}{c_{i,1}}\wedge \cdots \wedge \frac{dc_{i,n}}{c_{i,n}} \in \Omega^{n}_E.\] 
Since $w(b_i)=w(c_{i,j}) = 0$ for all $i,j$, $\partial_2(\gen_k(n+1,\ell,p)_{\widehat K}) = \partial_2(g_{\widehat K})=0$. This contradicts Corollary \ref{cor56}.

\medskip
\noindent
{\it Case 2.} $w=v|_E$ is nontrivial and $p|e(v/w)$, the ramification index of $v$ over $w$. Since $K/\kln$ is a finite extension, $\trdeg_k(K) = \trdeg_k(k_{\ell,n})$ and $\trdeg_k(\overline K) = \trdeg_k(\overline k_{\ell,n})$. Hence $v$ is a geometric valuation on $K$ of rank 1. We can therefore apply Proposition \ref{p1} which says there exists a differential basis $\{a_1,\ldots, a_{\ell(n+1)}\}$ for $K/k$ coming from $\overline K$ so that $E\subset K^p(a_1,\ldots, a_t)$ with $t \leq \ell+n-1 = \trdeg_k(E)$. Moreover, since $p|e(v/w)$, $v(a_i)=0$ for all $1\leq i \leq t$ (see Remark \ref{r3}). Set $L=K^p(a_1,\ldots, a_t)$ and from this point forward we denote  $g = \res{L/E}(g) \in \H^{n+1}_p(L)$. By \cite[7.8]{E} (\cite[9.2]{bc}) the differential basis $\{\overline a_i\}$ for $\overline K/k$ corresponds to a coefficient field $K_1\subset \widehat K$ containing $\{a_i \,|\, v(a_i)=0\}$.  In particular, there is an isomorphism $\widehat K \cong K_1((\pi))$ with $\pi = a_{i_0}$, $i_0> t$ and $a_i \in K_1$ for $1\leq i \leq t$.  By Lemma \ref{lh2}, $g_{\widehat K} = \sum g_i$ with each $g_i$  a sum of elements of the form $f/\pi^i \frac{da^{e_1}}{a^{e_1}}\wedge \cdots \wedge \frac{da^{e_n}}{a^{e_n}}$ with $f \in K_1$ and $e_i \in \Lambda_t$. Since $a^{e_i}\in K_1$ for all $e_i \in \Lambda_t$, $f\frac{da^{e_1}}{a^{e_1}}\wedge \cdots \wedge \frac{da^{e_n}}{a^{e_n}} \in \Omega^n_{K_1}$ and therefore $g_{\widehat K} = \sum g_i$ satisfies the hypothesis of Proposition \ref{p2}. Since $g_{\widehat K} \in U_0(\widehat K)$, we can conclude $g_{\widehat K} = g_0$. Lemma \ref{lh2} part (1) shows $\partial_2(g_0) = 0$. This contradicts Corollary \ref{cor56}.

\medskip
\noindent 
{\it Case 3.} $w=v|_E$ is nontrivial and $p\nmid e(v/w)$. Set $e=e(v/w)$, let $\pi$ be a uniformizer for $(K,v)$ and $\tau = u\pi^e$ be a uniformizer for $(E,w)$ and $\mathcal B' = \{a_1,\ldots, a_{\ell+n-2},\tau\}$  a differential basis for $E/k$ coming from $\overline E$, the residue field of $E$ with respect to $w$. We want to show the subset $\{a_1,\ldots,a_{\ell+n-2}\}$ of $\mathcal B'$ extends to a differential basis for $K/k$. To do this we first prove that the set 
\begin{equation}\{a^{e} \,|\, e \in \Lambda_{\ell+n-2}\}\subset K\label{e989}\end{equation}
is linearly independent over $K^p$. Recall in the proof of Theorem \ref{p1} we  took a minimal generating set of these elements over $K^p$ to build a full differential basis of $K$.  In general the set in (\ref{e989}) won't be linearly independent over $K^p$, but it is in our case because we have assumed something special on $E$, that is, $\res{K}(\gen_k(n+1,\ell,p))$ descends to $E$ and we can harness the power of Theorem \ref{t22}. 

Assume that up to numbering there exists $t<\ell+n-2$ such that $a_1,\ldots, a_t$ are a minimal system of generators of $K^p(a_1,\ldots, a_{\ell+n-2})$ over $K^p$, i.e., $K^p(a_1,\ldots, a_{\ell+n-2},\tau) = K^p(a_1,\ldots, a_t,\tau)$. Arguing as in the proof of \ref{p1}, there exists a differential basis for $K/k$, $\{a_1',\ldots, a_{\ell(n+1)-1}',\tau\}$, such that $K^p(a_1,\ldots, a_t, \tau) = K^p(a_1', \ldots, a_t',\tau)$. Since $t+1\leq \ell+n-2$ and $\res K(\gen_k(n+1,\ell,p))$ descends to $E$, the inclusion 
\[E\subset  K^p(a_1,\ldots, a_t,\tau)= K^p(a_1', \ldots, a_t',\tau)\subset  K^p(a_1', \ldots, a_{\ell(n+1)-1}',\tau)=K\]
contradicts Theorem \ref{t22}. The set in (\ref{e989}) is therefore linearly independent over $K^p$ and we may choose $a_{\ell+n-1},\ldots,a_{\ell(n+1)-1} \in R^\times$ such that $\mathcal B = \{a_1,\ldots,a_{\ell(n+1)-1},\pi\}$ is a $p$-basis for $K/k$ and hence a differential basis for $K/k$ coming from $\overline K$. We have lined up our two completions $\widehat E \subset \widehat K$ to admit compatible coefficient fields. That is, we can choose coefficient fields $E_1$ and $K_1$ of $\widehat E$ and $\widehat K$ respectively so that
\[\widehat E \cong E_1((\tau)) \subset \widehat K \cong K_1((\pi))\] 
and $E_1 \subset K_1$. Since these coefficient fields correspond to the units in the differential bases $\mathcal B'$ and $\mathcal B$ respectively, we have $\{a_1,\ldots, a_{\ell+n-2}\}$ is a differential basis for $E_1/k$ and $\{a_1,\ldots, a_{\ell(n+1)-1}\}$ is a differential basis for $K_1/k$. Note in particular that the transcendence degree of $E_1$ over $k$ is $\ell+n-2$, the order of the differential basis (\cite[16.14]{E}).

We now show $g \in \H^{n+1}_{p,ur}(\widehat E) = U_0(\widehat E)$. Let $m$ be the smallest integer such that $g \in U_m(\widehat E)$. Assume $m > 0$ and consider the map 
 \begin{equation}U_m/U_{m-1}(\widehat E) \to U_{em}/U_{em-1}(\widehat K)\label{eq6}\end{equation}
 from Lemma \ref{l4}. If $p \nmid m$, then $p \nmid em$ and by Lemma \ref{l4}(\ref{e66}) the differential form side of the commutative diagram is multiplication by $\bar u^{-m}$ (recall $\tau = u\pi^e$ with $v(u)=0$) composed with extension of scalars.  In particular, (\ref{eq6}) is an injection if and only if $\Omega^n_{E_1} \to \Omega^n_{K_1}$ is an injection. By Lemma \ref{l78}, the set inclusion $\{a_1,\ldots, a_{\ell+n-2}\} \subseteq \{a_1,\ldots, a_{\ell(n+1)-1}\}$ of $p$-bases for $E_1$ and $K_1$ shows that $\Omega^{n}_{E_1} \to \Omega^{n}_{K_1}$ is an injection. Since $g_{\widehat K} \in U_0(\widehat K)\subset U_{em-1}(\widehat K)$, this is a contradiction to the minimality of $m$.

Assume $m>0$ and $p | m$. Let $\rho_m^{-1}(g_{\widehat E}) = (\omega_{n} + \Omega^{n}_{E_1,d=0}, \omega_{n-1} + \Omega^{n-1}_{E_1,d=0})$ with $\omega_i \in \Omega^i_{E_1}$. Since $g_{\widehat K} \in U_0(\widehat K)\subset U_{em-1}(\widehat K)$ Lemma \ref{l4}(\ref{ee66}) gives
 \begin{eqnarray}
(0,0) &=&\psi_m\left(\omega_{n} + \Omega^{n}_{E_1,d=0}, \omega_{n-1} + \Omega^{n-1}_{E_1,d=0}\right)\label{e65} \\
&=& \left(\bar u^{-m}\omega_{n}+\omega_{n-1}\wedge\frac{d\bar u}{\bar u} + \Omega^{n}_{K_1,d=0},e\bar u^{-m}\omega_{n-1} + \Omega^{n-1}_{K_1,d=0}\right) \nonumber
\end{eqnarray}
Since $e\bar u^{-m} \in K_1^p-\{0\}$, (\ref{e65}) shows $d(\omega_{n-1}) =0$ in $\Omega^{n}_{K_1}$. Therefore, by Lemma \ref{l78}, $d(\omega_{n-1})=0$ in $\Omega^{n}_{E_1}$ and also $d( \omega_{n-1}\wedge d\bar u/\bar u) = 0$, i.e., 
\[\overline u^{-m}\omega_{n}+\omega_{n-1}\wedge \frac{d\bar u}{\bar u} + \Omega^{n}_{K_1,d=0} = \overline u^{-m}\omega_{n}+ \Omega^{n}_{K_1,d=0}\]
In particular, $\omega_{n} + \Omega^{n}_{E_1,d=0} \in \ker(\Omega^{n}_{E_1}/\Omega^{n}_{E_1,d=0} \to \Omega^{n}_{K_1}/\Omega^{n}_{K_1,d=0})$ which is $0$ by Lemma \ref{l78}. Therefore, $\omega_{n} \in \Omega^{n}_{E_1,d=0}$. We have shown that $g_{\widehat E} \in \ker(\rho_m^{-1}) =0$, i.e., $g_{\widehat E} \in U_{m-1}(\widehat E)$, contradicting the minimality of $m$.  Therefore, $g \in U_0(\widehat E)$ and we can use Lemma \ref{l4}(\ref{e45}); set $\rho_0^{-1}(g_{\widehat E}) = (\omega_{n},\omega_{n-1})$ where $\omega_i$ is a $i$-form representing a class in $\H^{i+1}_p$. 

\begin{equation}\xymatrix{
\H^{n+1}_\ur(\widehat K) \ar[r]^(.4){\rho_0^{-1}} & \H^{n+1}_{p}(K_1) \oplus \H^{n}_p(K_1)&(\omega_{n} + \omega_{n-1}\wedge\frac{d\bar u}{\bar u}, e\,\omega_{n-1})\\
\H^{n+1}_\ur(\widehat E) \ar[u]^{\mathrm{res}}\ar[r]^(.4){\rho_0^{-1}}&\H^{n+1}_{p}(E_1) \oplus \H^{n}_p(E_1)\ar[u]_{\psi_0}&(\omega_{n},\omega_{n-1})\ar@{|->}[u]_{\psi_0}
}
\label{eq88}
\end{equation}
Let $\gen_k(n+1,\ell,p)_{\widehat K}$ denote the extension of scalars from $k_{\ell,n}$ to $\widehat K$ and set $y_{\ell,n} = u'\pi^{e'}$ with $u'$ a unit in $K$ and $e' = e(v/v_{\ell,n})$, an integer prime to $p$. By Lemma 4.3(3) the map $\rho_0^{-1} = (\partial_1,\partial_2)$ applied to $\gen_k(n+1,\ell,p)_{\widehat K}$ is:
\begin{eqnarray*}
\partial_1(\gen_k(n+1,\ell,p)_{\widehat K})&=& \gen_k(n+1,\ell-1,p)_{K_1}+x_\ell \frac{dy_{\ell,1}}{y_{\ell,1}}\wedge\cdots\wedge \frac{dy_{\ell,n-1}}{y_{\ell,n-1}}\wedge \frac{d\overline{u'}}{\overline{u'}}\\
\partial_2(\gen_k(n+1,\ell,p)_{\widehat K})&=&e'\, x_\ell \frac{dy_{\ell,1}}{y_{\ell,1}}\wedge\cdots\wedge \frac{dy_{\ell,n-1}}{y_{\ell,n-1}}
\end{eqnarray*}
Combining this with (\ref{eq88}) we have two equalities in $\H^{n+1}_p(K_1)$ and $\H^n_p(K_1)$ respectively:
\begin{eqnarray}
\gen_k(n+1,\ell-1,p)_{K_1} + x_\ell \frac{dy_{\ell,1}}{y_{\ell,1}}\wedge\cdots\wedge \frac{dy_{\ell,n-1}}{y_{\ell,n-1}}\wedge \frac{d\overline{u'}}{\overline{u'}}&=& \omega_{n}+\omega_{n-1}\wedge\frac{d\bar u}{\bar u}\label{e678}\\
e'\, x_\ell \frac{dy_{\ell,1}}{y_{\ell,1}}\wedge\cdots\wedge \frac{dy_{\ell,n-1}}{y_{\ell,n-1}}&=& e\,\omega_{n-1}\nonumber
\end{eqnarray}

If $\omega_{n-1}\wedge \frac{d\bar u}{\bar u}$ and $x_\ell \frac{dy_{\ell,1}}{y_{\ell,1}}\wedge\cdots\wedge \frac{dy_{\ell,n-1}}{y_{\ell,n-1}}\wedge \frac{d\overline{u'}}{\overline{u'}}$ were 0, then $\gen_k(n+1,\ell-1,p)_{K_1}$ would descend to $E_1$, a field with transcendence degree $n+\ell-2 = n+(\ell-1)-1$ over $k$ (\cite[VI.10.3, Cor 4]{MR979760}) and we would proceed by analyzing and manipulating the extensions $E_1$ and $K_1$ to contradict the induction hypothesis. But these are not necessarily zero, so to get our contradiction we need to split them.

Let $k'$ be an algebraic closure of $k(x_\ell,y_{\ell,1},\ldots,y_{\ell,n-1})$ and set 
\[k'_{\ell-1,n}=k'(x_i,y_{i,j})_{1\leq i \leq \ell-1, \,1\leq j \leq n},\] 
so that $\gen_{k'}(n+1,\ell-1,p) \in \H^{n+1}_p(k'_{\ell-1,n})$. We will derive a contradiction to the induction hypothesis on this length $\ell-1$ generic $p$-symbol. Let $K_1'$ be the composite of $K_1$ and $k'_{\ell-1,n}$ (both fields are contained in an algebraic closure of $k'_{\ell,n}$) and note that $K_1'/k'_{\ell-1,n}$ is of degree prime to $p$. Set $E_1'$ to be the composite of $E_1$ and $k'$ over $k$ (each of these fields are contained in $K_1'$). 
\begin{equation}
\xymatrix@1@C=.2in@R=.2in{
     &K_1'\ar@{-}[dr]\\
E_1'\ar@{-}[ur]&\kpln\ar@{-}[u]&K_1\\
                    & k'\ar@{-}[ul]\ar@{-}[u] &\okln\ar@{-}[u]\ar@{-}[ul]&E_1\ar@{-}[ul]\\
                    &&k\ar@{-}[ur]\ar@{-}[u]\ar@{-}[ul]
}
\end{equation}
We now extend scalars: $\H^{n+1}_p(K_1)\to\H^{n+1}_p(K_1')$. First note that $\gen_k(n+1,\ell-1,p)_{k'_{\ell-1,n}} = \gen_{k'}(n+1,\ell-1,p)$. Also note that since the $y_{\ell,i}$ are $p$-th powers in $k'$, 
\[\res{k'_{\ell-1,n}}\left(x_\ell\frac{dy_{\ell,1}}{y_{\ell,1}}\wedge \cdots\wedge\frac{dy_{\ell,n-1}}{y_{\ell,n-1}}\right)=0.\]
In particular, $\res{k'_{\ell-1,n}/E_1}(e\,\omega_{n-1}) = 0$ and since $p\nmid e$, $\res{K_1'}(\omega_{n-1}) = 0$. Therefore, extending scalars all the way up to $\H^{n+1}_p(K_1')$, the two equations in (\ref{e678}) collapse to
\[\res{K_1'/k'_{\ell-1,n}}(\gen_{k'}(n+1,\ell-1,p)) = \res{K_1'/E_1'}(\res{E_1'/E_1}(\omega_n)).\] Since $K'_1/k'_{\ell-1,n}$ is a prime to $p$ extension and the field $E_1'$ satisfies $\trdeg_{k'}(E_1')\leq \trdeg_{k}(E_1)=n+(\ell-1)-1$, this contradicts the induction hypothesis.

\end{proof}

Let $n=2$, then $\gen(2,\ell,p) = \sum_{i=1,\ell}x_i\frac{dy_i}{y_i} \in \H^2_p(K_{\ell,1})$ is the class of the generic length $\ell$ $p$-symbol division algebra $D_\ell = \otimes_{i=1}^\ell[x_i,y_i)$ in ${}_p\br(k(x_1,y_1,\ldots,x_\ell,y_\ell))$. Combining Theorem \ref{t1} with \cite[3.2]{baek}, we get the $p$-essential dimension of $D_\ell$ as a $p$-torsion Brauer class:

\begin{corollary}
$\ed_k(D_\ell;p) =\ed_k(D_\ell) =  \ell+1$. 
\label{c34}
\end{corollary}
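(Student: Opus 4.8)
The plan is to pin down $\ed_k(D_\ell)$ by a two-sided estimate, using Theorem~\ref{t1} at $n=1$ for the lower bound and Baek's upper bound \cite[3.2]{baek}, and then note that the $p$-essential dimension is caught in the same vise.

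First I would make the translation between the two languages precise. The canonical isomorphism $\H^2_p(F)\cong{}_p\br(F)$ is natural in the field $F$ (\cite[9.2.5]{GS}), and under it the class $\gen_k(2,\ell,p)=\sum_{i=1}^\ell x_i\frac{dy_i}{y_i}\in\H^2_p(k_{\ell,1})$ corresponds to $[D_\ell]=\bigl[\otimes_{i=1}^\ell[x_i,y_i)\bigr]\in{}_p\br(k_{\ell,1})$, exactly as recorded in the proof of Lemma~\ref{c11}. Functoriality of this isomorphism forces $\ed_k$ and $\ed_k(-;p)$ of $\gen_k(2,\ell,p)$ in $\H^2_p$ to agree with those of $D_\ell$ in ${}_p\br$. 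Applying Theorem~\ref{t1} with $n=1$ therefore gives $\ed_k(D_\ell;p)\geq\ell+1$, the essential dimension being taken with respect to $D_\ell\in{}_p\br(k_{\ell,1})$.

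For the other inequality, $k$ algebraically closed of characteristic $p$ contains $\mathbb F_{p^\ell}$, so \cite[3.2]{baek} applies and gives $\ed_k(\dec_{p^\ell})\leq\ell+1$. Since $D_\ell\in\dec_{p^\ell}(k_{\ell,1})$ and the natural transformation $\dec_{p^\ell}\to{}_p\br$ can only decrease essential dimension — if $D_\ell$ descends to a subfield as a product of $\ell$ degree-$p$ symbols, then its Brauer class descends there as well — one gets $\ed_k(D_\ell)\leq\ell+1$ for $D_\ell\in{}_p\br(k_{\ell,1})$. Combining this with the tautology $\ed_k(D_\ell;p)\leq\ed_k(D_\ell)$ yields
\[\ell+1\leq\ed_k(D_\ell;p)\leq\ed_k(D_\ell)\leq\ell+1,\]
so all three quantities equal $\ell+1$. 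There is no real obstacle in this argument; the only point requiring a moment's care is keeping straight which functor ($\H^2_p$, $\dec_{p^\ell}$, or ${}_p\br$) each essential dimension is measured against, and invoking naturality of $\H^2_p\cong{}_p\br$ so that the cohomological lower bound of Theorem~\ref{t1} transfers without loss to the Brauer-group formulation.
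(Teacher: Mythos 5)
Your proof is correct and matches the paper's argument: the lower bound comes from Theorem~\ref{t1} at $n=1$ transported through the natural isomorphism $\H^2_p\cong{}_p\br$, and the upper bound from Baek's $\ed_k(\dec_{p^\ell})\leq\ell+1$ via the natural map $\dec_{p^\ell}\to{}_p\br$. Your write-up simply makes explicit the functoriality points the paper leaves implicit.
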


Fix an algebraically closed field $k$ of characteristic $p$. Recall 
$\alg_{p^\ell,p^r}:\mathbf{Fields}/k \to \mathbf{sets}$ is the functor taking a field extension $K/k$ to the set  of isomorphism classes of central simple algebras over $K$ of degree $p^\ell$ and exponent dividing $p^r$. As mentioned in the introduction there is a natural bijection between $\H^1(K,\mathrm{GL}_{p^\ell}/\mu_{p^r})$ and $\alg_{p^\ell,p^r}(K)$ (see \cite[Ex. 1.1]{MR2520892}). In particular, $\ed_k(\alg_{p^\ell,p^r}) = \ed_k(\mathrm{GL}_{p^\ell}/\mu_{p^r})$ and $\ed_k(\alg_{p^\ell,p^r};p) = \ed_k(\mathrm{GL}_{p^\ell}/\mu_{p^r};p)$.

\begin{corollary} $\ed_k(\mathrm{GL}_{p^\ell}/\mu_p;p) = \ed_k(\alg_{p^\ell,p};p)\geq \ell+1$. 
\label{main}
\end{corollary}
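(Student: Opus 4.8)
The plan is to obtain the lower bound from a single well-chosen algebra and then pass between the functor $\alg_{p^\ell,p}$ and the group $\mathrm{GL}_{p^\ell}/\mu_p$ via the standard torsor dictionary. The algebra to use is the generic $p$-symbol $D_\ell=\otimes_{i=1}^\ell[x_i,y_i)$ over $k_{\ell,1}$, which the discussion preceding Corollary \ref{c34} identifies with the class $\gen_k(2,\ell,p)\in\H^2_p(k_{\ell,1})={}_p\br(k_{\ell,1})$ and which, by the generic abelian crossed product computation recalled in the proof of Lemma \ref{c11}, has degree $p^\ell$ and exponent exactly $p$. Hence $D_\ell$ is a genuine element of $\alg_{p^\ell,p}(k_{\ell,1})$, not merely of the larger functors $\alg_{p^\ell}$ or ${}_p\br$.

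First I would invoke the monotonicity of $p$-essential dimension along the natural transformation $\alg_{p^\ell,p}\to{}_p\br$ sending an algebra to its Brauer class: if an element $a$ of the source descends to a subfield after a prime-to-$p$ extension, then so does its image, whence $\ed_k(a;p)\geq\ed_k(b;p)$ for $b$ the image of $a$. Applied to $a=D_\ell$ and $b=[D_\ell]$, together with Corollary \ref{c34}, which gives $\ed_k([D_\ell];p)=\ell+1$, this yields $\ed_k(D_\ell;p)\geq\ell+1$ already as an element of $\alg_{p^\ell,p}(k_{\ell,1})$. Since $\ed_k(\alg_{p^\ell,p};p)$ is by definition the supremum of $\ed_k(a;p)$ over all extensions $K/k$ and all $a\in\alg_{p^\ell,p}(K)$, this single value forces $\ed_k(\alg_{p^\ell,p};p)\geq\ell+1$. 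The equality $\ed_k(\mathrm{GL}_{p^\ell}/\mu_p;p)=\ed_k(\alg_{p^\ell,p};p)$ is then just the natural bijection $\H^1(K,\mathrm{GL}_{p^\ell}/\mu_p)\cong\alg_{p^\ell,p}(K)$ of \cite[Ex. 1.1]{MR2520892}, which identifies the two functors and hence their ($p$-)essential dimensions.

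I do not expect a real obstacle at this stage: the corollary is a packaging step, and all the mathematical weight sits in Theorem \ref{t1} and its specialization Corollary \ref{c34}. The only points that warrant a sentence are the exponent-degree bookkeeping that places $D_\ell$ inside $\alg_{p^\ell,p}$ and the elementary functoriality of essential dimension under morphisms of functors; both are standard and are used above. The same argument with $\ed$ replacing $\ed(-;p)$ additionally gives $\ed_k(\alg_{p^\ell,p})\geq\ell+1$.
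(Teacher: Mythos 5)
Your proposal is correct and follows essentially the same route as the paper: both arguments take the single algebra $D_\ell$, use the functoriality of ($p$-)essential dimension along $\alg_{p^\ell,p}\to{}_p\br$ together with Corollary \ref{c34} to get $\ed_k(D_\ell;p)\geq\ell+1$ inside $\alg_{p^\ell,p}$, and then invoke the torsor bijection of \cite[Ex. 1.1]{MR2520892}. Your write-up merely makes explicit the monotonicity step and the exponent/degree bookkeeping that the paper leaves implicit.
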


\begin{proof} By Corollary \ref{c34} $D_\ell$ is an algebra defined over an extension of $k$ with degree $p^\ell$, exponent $p$ and essential dimension $\ell+1$ as a $p$-torsion Brauer class. The $p$-essential dimension of $D_\ell$ as an element of $\alg_{p^\ell,p}(K)$ is at least $\ell+1$.
\end{proof}

The author would like to thank Skip Garibaldi for introducing the problem.  Thank you also to Parimala and Suresh Venapally at Emory University, for their helpful comments, time and support during a visit. The author would also like to thank Stephan Tillmann (ARC Discovery Grant DP140100158) at the University of Sydney for support during the writing of this paper.

\bibliographystyle{abbrv}
\bibliography{ED}

\end{document}